\newtheorem{thm}{Theorem}[section]
\newtheorem{lem}[thm]{Lemma}
\newtheorem{cor}[thm]{Corollary}
\newtheorem{prop}[thm]{Proposition}
\renewcommand{\epsilon}{\varepsilon}
\title{Optimal regularity and nondegeneracy of a free boundary problem related to the fractional Laplacian}
\author{Ray Yang\\New York University\\ \texttt{ryang@cims.nyu.edu}}
\date{} 
\begin{document}
\maketitle
\begin{abstract}
We discuss the optimal regularity and nondegeneracy of a free boundary problem related to the fractional Laplacian. This work is related to, but addresses a different problem from, recent work of Caffarelli, Roquejoffre, and Sire \cite{C-R-S}. A variant of the boundary Harnack inequality is also proved, where it is no longer required that the function be 0 along the boundary. 
\end{abstract}
\section{Introduction}
In this paper, we prove certain local properties near the free boundary for minimizers of the following energy functional,
\[ J(u) = \frac{1}{2} \int y^{1-2\sigma} (\nabla u)^2 dx dy + \int_{\{y=0\}} u^\gamma dx, \]
where $(x,y)$ lies in the upper half space $\mathbb{R}^n \times \mathbb{R_+}$, and $0 < \sigma, \gamma < 1$, subject to $u \geq 0$. We are interested in the properties of $u$ along the hyperplane $\{y=0\}$. The first term of the energy functional is related to the fractional Laplace operator, and the second is thought of as imposing an energy penalty when $u>0$. When the local values of $u$ are sufficiently small (say, on $\partial B_1 \cap \{y>0\}$), the set $\{u=0\}$ is nontrivial, lying on $\{y=0\}$. The boundary of that set in the topology of $\mathbb{R}^n$ (that is, the $x$ variable only) is called the free boundary. 

We prove that in any neighborhood of a free boundary point, our minimizer is bounded by a power of the distance to the free boundary; specifically, if 0 is a free boundary point, then $\sup_{x\in B_r} u(x,0) \leq C r^\beta$. When combined with interior estimates, this gives the $C^\beta$ regularity of energy minimizers with a Holder seminorm that depends only on the distance to the free boundary. The exponent $\beta = \frac{2\sigma}{2-\gamma} $ is the critical scaling exponent for the problem. This is called the \textit{optimal regularity} of $u$, since we also prove the \textit{non-degenerate} nature of $u$, namely, that in any ball of radius $r$ about a free boundary point, $\sup_{x\in B_r} u(x,0) \geq C r^\beta$ for a constant $C$ that depends only on $n,\sigma,$ and $\gamma$. In the course of proving the optimal regularity, we prove and use an improvement on the boundary Harnack inequality of Caffarelli, Fabes, Mortola, and Salsa \cite{C-F-M-S}, which may be of interest even to those not working in free boundaries. 

The motivation for the problem comes from the intersection of the study of nonlocal integrodifferential operators like the fractional Laplacian and the study of one-phase free boundary problems. 

In the theory of one-phase free boundary problems arising from the minimization of energy for the \textit{classical} Laplacian, Alt and Caffarelli \cite{A-C} analyzed minimizers of the energy $J(u) = \frac{1}{2} \int (\nabla u)^2 + \chi_{u > 0} dx$ subject to non-negative Dirichlet data, while the study of the free boundary arising from minimizers of the energy $J(u) = \int \frac{(\nabla u)^2}{2} + u dx$ with non-negative Dirichlet data is encompassed by the study of the obstacle problem. An intermediate case is the case studied by Alt and Phillips \cite{A-P}, which is that of the free boundary for minimizers of $J(u) = \int \frac{(\nabla u)^2}{2} + u^\gamma dx$, where $0<\gamma<1$. In a heuristic sense, we can view the Alt-Caffarelli problem as the case of $\gamma = 0$, and the case $\gamma =1$ as a special case of the obstacle problem. The fractional case of the Alt-Caffarelli problem was recently analyzed in \cite{C-R-S}; that paper was the direct inspiration for this one. 

The problem we study is the analogue of the problem of Alt and Phillips for the fractional, rather than standard, Laplace operator. The current article only covers the regularity and nondegeneracy of energy minimizers, and is thus properly the analogue of Phillips' work in \cite{P-1} and part of \cite{P-2}. The results in \cite{P-1} were extended by Giaquinta and Giusti to the two phase case \cite{Giaquinta-Giusti}. 

The fractional Laplace is a nonlocal integral operator, taking the form 
\[ (-\Delta)^\sigma u(x) = C_{n,\sigma} \int_{\mathbb{R}^n} \frac{u(x)-u(z)}{|x-z|^{n+2\sigma}} dz.\] 
This operator has a corresponding energy given by
\[ J(u) = \int_{\mathbb{R}^n \times \mathbb{R}^n} \frac{(u(x)-u(z))^2}{|x-z|^{n+2\sigma}} dz dx \]
This latter term is a nonlocal energy, and not very easy to manipulate. In \cite{C-S}, Caffarelli and Silvestre introduced the notion of extension to one extra spatial dimension and examining a particular PDE on the upper half-space, with the fractional Laplacian being equivalent to the Dirichlet-to-Neumann map at the boundary. To be precise, consider a function $u \in H^s(\mathbb{R}^n)$. Denoting the coordinates $(x,y) \in \mathbb{R}^n \times \mathbb{R}_+$, such a function can be extended by means of a suitable Poisson kernel to a function $u(x,y)$, lying in $H^1(a,\mathbb{R}^n \times \mathbb{R}_+$, where the energy seminorm is given by $[u]_{H^1(a)} = \int_{\mathbb{R}^{n+1}_+} y^a |\nabla u|^2 dx dy$, where $u(x,0) = u(x)$. This function will satisfy the equation
\[ \nabla \cdot (y^{1-2\sigma} \nabla u) = 0 \]
Then we have
\[ (-\Delta)^\sigma u(x) =  - C_{n,\sigma} \lim_{y\rightarrow 0} y^{1-2\sigma} \partial_y u(x,y) \]
The natural energy corresponding to the second-order equation on the half-space is then $J(u) = \int y^{1-2\sigma} |\nabla u (x,y)|^2 dx dy$. This is an energy where it is easier to study the purely local properties of its minimizers. 

The extension characterization of the fractional Laplacian has been used to study both the obstacle problem (\cite{C-S-S}) and the $\gamma = 0$ case (\cite{C-R-S}). We will apply it to study the intermediary case, that is to say, minimizers of our energy functional 
\[ J(u) = \int y^{1-2\sigma} |\nabla u(x,y)|^2 dx dy + \int_{\{y=0\}} u^\gamma dx \]
in subsets of the upper half-space with parts of their boundary lying along $y = 0$, where we have $0 < \gamma < 1$. Since this is a study of the one-phase problem, we assume non-negative Dirichlet boundary conditions. The second term in the energy penalizes non-zero values of $u$ along the hyperplane $\{y=0\}$. Hence, we can consider separately the zero set of $u$ (called the \textit{contact set}), and its positivity set. Restricted to $\{y=0\}$, the interface between the two is the \textit{free boundary}. The optimal regularity as $\gamma \rightarrow 0$ is indeed $C^\sigma$, which accords with the regularity when $\gamma = 0$ (\cite{C-R-S}), but the regularity for the fractional obstacle problem is $C^{1,\sigma}$ (\cite{C-S-S}), which is not the limit as $\gamma \rightarrow 1$ of the regularity found here. This jump in regularity is interesting. 

The ideas behind the proof of the optimal regularity of energy minimizers for fractional-order cases can be extended to a proof of optimal regularity for the second-order case, which was first proved by Phillips \cite{P-1}. Since the proof for the second-order case illustrates the ideas in a less involved setting than the fractional-order case, we provide it as well. The key ingredient for the proof is the construction of a lower barrier, or subsolution, for the energy minimizer which is strictly positive at the center of a ball when the values near-by are ``too large,'' thus, for a free boundary point to exist, the growth cannot be too great. 

\section{Preliminary considerations}
In this section we identify some technical points of interest. First, we prove that minimizers of the energy exist. Second, we identify the scaling associated with the problem. Third, we list certain properties of the equation and minimizer that are known and will prove useful to our analysis.

\subsection{Existence considerations and some definitions}
We consider, in $B_+ = B_1 \cap \{y \geq 0\}$, minimizers of the energy
\[ J(u) = \frac{1}{2}\int_{B_+} y^a |\nabla u|^2 dx dy + \int_{B_1 \cap \{y=0\}} u^\gamma dx \]
in the space $H^1(B_+,a)$, with seminorm given by $\|u\| = \int_{B_+} y^a (|\nabla u|^2) dx dy$. We impose non-negative Dirichlet conditions on $\partial B_1 \cap \{y > 0\}$, where $a = 1-2\sigma$, $0<\sigma<1$, $0<\gamma<1$. For sake of convenience, we denote $\Gamma = B_1 \cap \{y=0\}$. 

The energy can be interpreted as an averaging term which ``lifts'' the solution towards the boundary conditions, and a term which punishes $u$ for being nonzero at $y=0$, causing it to ``stick.'' The set $\{u=0\}$, which necessarily lies in $\{y=0\}$, is called the \textit{contact set} of $u$. The interface between $\{u=0\} \cap \{y=0\}$ and $\{u>0\} \cap \{y=0\}$ is called the \textit{free boundary}. 

Existence of minimizers is assured by the usual methods: consider a minimizing sequence for the energy. The first term of the energy is lower semicontinuous with respect to the norm for the usual reasons. The second term is continuous with respect to the norm for $L^2(\Gamma)$ From the extension result of Caffarelli-Silvestre we know that the trace of functions lying in $H^1(B_+,a)$ lie in $H^\sigma(\Gamma)$ \cite{C-S}, whence we apply the usual Sobolev embedding of $H^\sigma$ inside $L^2$. 

We will use $X = (x,y)$, where $x \in \mathbb{R}^n$ and $y \in \mathbb{R}_+$. 

\subsection{Scaling of the problem}\label{scaling-section}
Put briefly, we seek the scaling that preserves minimizers of the energy 
\[ J(u) = \frac{1}{2} \int_{B_+} y^a |\nabla u|^2 dx dy + \int_\Gamma u^\gamma dx \]
We consider the scaling
\[ w(x,y) = \frac{1}{\lambda^\beta} u(\lambda x, \lambda y) \]
We find that 
\[ J(w) = \frac{1}{2} \int_\Omega y^a \lambda^{2-2\beta} |\nabla u(\lambda x, \lambda y)|^2 dx dy + \int_\Gamma \lambda^{-\beta \gamma} u^\gamma dx \]
which, after the change of variable, scales to
\[ \lambda^{-a-n+2-2\beta} \frac{1}{2} \int_{\lambda \Omega}  y^a |\nabla u|^2 dx dy + \lambda^{-\beta \gamma -n + 1} \int_{\lambda \Gamma} u^\gamma dx \]
Setting the exponents equal, we find that
\[ \beta = \frac{2\sigma}{2-\gamma} \]

\subsection{Some other useful properties}
The Euler-Lagrange equations for $J(u)$ tell us that, in a distributional sense, the minimizer $u$ satisfies
\[ \nabla \cdot (y^a \nabla u) = 0 \]
in the interior of $B_+$, and 
\[ \lim_{y\rightarrow 0} y^a \partial_y u = \gamma u^{\gamma -1 } \]
along $\Gamma$ wherever $u > 0$. Caffarelli and Silvestre \cite{C-S} showed that, in the upper half space $\mathbb{R}_+^{n+1}$, the effective normal derivative operator is equivalent to the fractional Laplacian of order $\sigma$:
$$\lim_{y\rightarrow 0} y^a \partial_y u = -C (-\Delta)^\sigma u $$

Fabes, Jerison, Kenig, and Serapioni (\cite{F-K-S},\cite{F-K-J-1},\cite{F-K-J-2}) extended the De Giorgi-Nash-Moser theory of divergence-form elliptic equations to degenerate elliptic equations with Muckehnhoupt $A_2$ weights; these are equations of the form 
\[ \nabla \cdot (A(x) \nabla u) = 0 \]
where the matrix $A$ satisfies
\[ \lambda w(x) |\xi|^2 \leq \sum A_{ij}(x) \xi_i \xi_j \leq \Lambda w(x) |\xi|^2 \]
for $\xi \in \mathbb{R}^n$, with weight functions $w$ satisfying
\[ \left(\frac{1}{|B|} \int_B w(x) dx \right) \left(\frac{1}{|B|} \int \frac{1}{w(x)} dx \right) \leq C \]
for all balls $B$. In particular, such properties as the strong maximum principle, Holder regularity, and the Harnack inequality all hold. There are certain other properties, such as the De Giorgi Oscillation Lemma and a more specific form of the De Giorgi-Nash-Moser Harnack inequality, which follow directly from their work but are not explicitly stated. A discussion of those properties has been provided in \S \ref{ap:degiorgi} for the reader's convenience. 

Finally, the generalized Hopf lemma for $\sigma$-harmonic functions in $\mathbb{R}^n$ (stated in \cite{C-R-S}) is useful:
\begin{lem}\label{lem:Hopf}
If a smooth function $v(x)$ satisfies $(-\Delta)^\sigma v = 0$ in some smooth domain $\Omega \subset \mathbb{R}^n$, $v$ being non-negative and positive in the interior of $\Omega$, and if there exists a point $x_0 \in \partial_\Omega$ where $v(x) = 0$, then there exists $C$ such that $v(x) \geq C ( (x-x_0) \cdot \nu(x_0))^\sigma $ where $\nu(x_0)$ is the interior normal to $\partial \Omega$ at $x_0$. 
\end{lem}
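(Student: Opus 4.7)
The plan is to combine the interior-ball condition (available because $\partial\Omega$ is smooth at $x_0$) with the explicit fractional Poisson representation for $\sigma$-harmonic functions on a ball. The nonlocality of $(-\Delta)^\sigma$ is an asset here: it lets us convert positivity of $v$ on an interior set well away from $x_0$ into quantitative growth of $v$ near $x_0$, and the exponent $\sigma$ falls out of the Poisson kernel on its own.

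To set up, choose $r > 0$ small and $z = x_0 + r\,\nu(x_0)$ so that $B := B_r(z) \subset \Omega$ is tangent to $\partial\Omega$ only at $x_0$; such an $r$ exists by smoothness. Since $(-\Delta)^\sigma v = 0$ on $B$ and $v$ is non-negative on all of $\mathbb{R}^n$ (the intended global interpretation), the classical Riesz--Bogdan Poisson formula for $\sigma$-harmonic functions on a ball gives
\[ v(x) = c_{n,\sigma}\,(r^2 - |x-z|^2)^\sigma \int_{\mathbb{R}^n \setminus B} \frac{v(y)}{(|y-z|^2 - r^2)^\sigma \,|x-y|^n}\, dy, \qquad x \in B. \]
Because $v$ is strictly positive on the interior of $\Omega$, fix a small closed ball $E \subset \Omega \setminus \overline{B}$ on which $v \geq \delta > 0$ by continuity. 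Dropping the non-negative contribution from $\mathbb{R}^n \setminus E$ yields
\[ v(x) \geq c_{n,\sigma}\,\delta\,(r^2 - |x-z|^2)^\sigma \int_E \frac{dy}{(|y-z|^2 - r^2)^\sigma \,|x-y|^n} \geq C\,(r^2 - |x-z|^2)^\sigma \]
uniformly in a neighborhood of $x_0$ inside $B$, since $E$ is a fixed compact set separated from $\overline{B}$. Finally, expanding $|x-z|^2 = |x-x_0|^2 - 2r(x-x_0)\cdot \nu(x_0) + r^2$ shows $r^2 - |x-z|^2 = 2r(x-x_0)\cdot\nu(x_0) - |x-x_0|^2 \geq r(x-x_0)\cdot\nu(x_0)$ for $x$ sufficiently close to $x_0$ inside the cone $(x-x_0)\cdot \nu(x_0) \geq \tfrac{1}{2}|x-x_0|$; taking $\sigma$-th powers delivers the claim.

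The only non-elementary input is the Poisson formula for $\sigma$-harmonic functions on a ball, which is classical (Riesz, Bogdan). An alternative, purely PDE-based version would construct a lower barrier from $\varphi(x) = (r^2 - |x-z|^2)_+^\sigma$, a function for which $(-\Delta)^\sigma \varphi$ is a known positive constant on $B$, and compare with $v$; the logic is essentially the same. One subtlety worth flagging is that ``$v$ non-negative'' must here be understood \emph{globally} on $\mathbb{R}^n$, since $(-\Delta)^\sigma$ depends on the global values of $v$; in the applications this is automatic because $v$ arises as the trace of a non-negative function. Beyond carefully invoking the Poisson representation, I do not foresee serious technical obstacles.
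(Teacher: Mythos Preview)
The paper does not actually prove this lemma; it merely records it as a known fact, attributing it to \cite{C-R-S}. So there is no ``paper's own proof'' to compare against. Your argument via the interior ball and the Riesz--Bogdan Poisson representation on $B_r(z)$ is the standard route to this result and is correct as written: the only substantive input is the explicit Poisson kernel $c_{n,\sigma}(r^2-|x-z|^2)^\sigma\,(|y-z|^2-r^2)^{-\sigma}|x-y|^{-n}$, from which the $\sigma$-power growth near $x_0$ drops out immediately once you restrict the integration to a fixed compact set $E\subset\Omega\setminus\overline{B}$ where $v\ge\delta>0$. Your caveat that ``$v\ge 0$'' must be read globally on $\mathbb{R}^n$ is well taken and is exactly how the lemma is used in the paper (the functions in question are traces of non-negative extensions). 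The alternative barrier you mention, $\varphi(x)=(r^2-|x-z|^2)_+^\sigma$ with $(-\Delta)^\sigma\varphi$ constant on $B$, is essentially the same computation packaged differently.
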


\section{Optimal regularity for the 2nd order case}
The optimal regularity for the problem in the 2nd order case was first obtained by Phillips \cite{P-1}. Our method for proving the optimal regularity of the fractional case can be adapted to give an alternative proof for Phillips' main thereom (Theorem II in \cite{P-1}). The main intuition behind our proof in the 2nd order case is free of certain technical issues that occur in the fractional case, and so we present it here first. 

We obtain the optimal regularity of the energy minimizer $u$ to the energy functional
\[ J(u) = \int \frac{|\nabla u|^2}{2} + u^\gamma dx, \]
showing that $u \in C^{1,\beta-1}$ where $\beta = \frac{2}{2-\gamma}$, the scaling factor obtained by a calculation like that in \S \ref{scaling-section}. As with the fractional case, we assume the boundary data is non-negative, which allows us to assume the same for $u$. Notice that the Euler-Lagrange equations tells us that, when $u > 0$, $u$ satisfies 
\[ \Delta u = \gamma u^{\gamma -1} \]
As with Phillips, we seek to prove
\begin{lem}\label{lem:2nd-order-regularity}
There exists a constant $c_0(n)$ such that if 
\[\fint_{B_r} u dS > c_0 r^\beta \]
then $u > 0$ inside $B_r$. 
\end{lem}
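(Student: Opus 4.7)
The plan is to argue by contradiction: suppose $u(x^*) = 0$ for some $x^* \in B_r$, and derive a quantitative conflict with the hypothesis $\fint_{B_r} u\,dS > c_0 r^\beta$. By the scaling identified in Section \ref{scaling-section}, the function $v(x) := r^{-\beta}u(rx)$ is again a minimizer of the same functional on $B_1$, and the hypothesis becomes $\fint_{\partial B_1} v\,dS > c_0$. It therefore suffices to prove the lemma at $r=1$; write $u$ for $v$, and assume $x^* \in B_{1/2}$ (once positivity on $B_{1/2}$ is established, a covering/rescaling argument extends it to all of $B_1$).

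Consider the harmonic replacement of $u$: let $h$ be the harmonic function in $B_1$ with $h|_{\partial B_1} = u|_{\partial B_1}$. Since $\Delta u = \gamma u^{\gamma-1}\chi_{\{u>0\}} \geq 0$ distributionally, $u$ is subharmonic and so $u \leq h$ on $B_1$. The mean value property yields $h(0) = \fint_{\partial B_1} u > c_0$; writing $M := h(0)$, the Harnack inequality for the positive harmonic function $h$ produces $h \geq C_1 M$ on $B_{1/2}$, whence $(h-u)(x^*) = h(x^*) \geq C_1 M$.

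To quantify the closeness of $u$ and $h$, use minimality. Since $h$ is an admissible competitor, $J(u) \leq J(h)$; combining with the Dirichlet orthogonality $\int_{B_1}|\nabla u|^2 - \int_{B_1}|\nabla h|^2 = \int_{B_1}|\nabla(u-h)|^2$ (valid because $u-h \in H^1_0(B_1)$ and $h$ is harmonic) yields
\[ \int_{B_1}|\nabla(u-h)|^2\,dx \;\leq\; 2\int_{B_1} h^\gamma\,dx \;\leq\; C_2 M^\gamma, \]
where the last inequality is Jensen applied to the concave map $t\mapsto t^\gamma$, together with $\fint_{B_1} h = h(0) = M$ (mean value for harmonic $h$). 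Poincar\'e then gives $\|u-h\|_{L^2(B_1)} \leq C_3 M^{\gamma/2}$, which grows strictly slower than the pointwise bound $C_1 M$ for large $M$.

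The contradiction follows by upgrading $L^2$-closeness to a pointwise estimate. The a priori H\"older regularity of $u$ (from De Giorgi--Nash--Moser applied to the subharmonic $u$; cf.\ Section \ref{ap:degiorgi}) gives $\|u\|_{C^{0,\alpha}(B_{3/4})} \leq C_4 M$ for some $\alpha = \alpha(n) > 0$, and the same bound holds for the harmonic $h$. Interpolating between the $L^2$- and $C^{0,\alpha}$-norms of $u-h$ yields
\[ \|u-h\|_{L^\infty(B_{1/2})} \;\leq\; C_5\, M^{1-\theta(1-\gamma/2)} \]
for some $\theta = \theta(n,\alpha) \in (0,1)$; since $\theta(1-\gamma/2) > 0$, the right-hand side is strictly smaller than $C_1 M$ once $M$ is large enough. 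Choosing $c_0 = c_0(n,\gamma)$ to enforce this gives $u(x^*) \geq C_1 M - C_5 M^{1-\theta(1-\gamma/2)} > 0$, contradicting $u(x^*) = 0$. The principal technical burden is the quantitative tracking of the interpolation and regularity constants so that $c_0$ depends only on $n$ and $\gamma$, as the lemma requires.
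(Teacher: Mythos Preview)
Your approach is genuinely different from the paper's barrier construction, and the energy--harmonic-replacement idea is attractive. However, there is a real gap at the regularity step.

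You write that $\|u\|_{C^{0,\alpha}(B_{3/4})}\le C_4 M$ follows from ``De Giorgi--Nash--Moser applied to the subharmonic $u$.'' This is not correct: De Giorgi--Nash--Moser yields H\"older continuity for \emph{solutions} (or for functions that are simultaneously sub- and supersolutions), not for mere subsolutions. A bounded subharmonic function need not be H\"older continuous, and the right-hand side $\gamma u^{\gamma-1}\chi_{\{u>0\}}$ is not a priori in any $L^p$ with $p>n/2$, so the standard inhomogeneous theory does not apply either. Your interpolation step collapses without this input: from $\|u-h\|_{L^2}\le CM^{\gamma/2}$ and only an $L^\infty$ bound on $u-h$ you cannot control $(h-u)(x^*)$ pointwise, and the superharmonicity of $h-u$ gives the inequality in the wrong direction (it bounds averages by point values, not conversely). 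One can establish a preliminary $C^{0,\alpha}$ estimate for Alt--Phillips minimizers by a Campanato iteration of exactly the harmonic-replacement comparison you already set up, but that is a separate lemma requiring its own proof and careful constant tracking; it is not the one-line citation you give.

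A second issue is the reduction to $x^*\in B_{1/2}$. The promised ``covering/rescaling argument'' to reach all of $B_1$ does not obviously work: to iterate you would need the large-average hypothesis on smaller spheres not centered at the origin, and subharmonicity of $u$ does not supply that. The paper's barrier $w=w_1+w_2+w_3$ is built precisely so that it is strictly positive throughout the open ball, with $w_1$ providing the correct boundary profile $(1-|x|)^\beta$; this handles the region near $\partial B_1$ directly, which your argument does not.
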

Since the scaling $u_\lambda(x) = \frac{1}{\lambda^\beta} u(\lambda x)$ preserves minimizers, we need only show this for $r=1$, and scaling would take care of the rest.

Our proof works by showing that, when the average on the boundary is sufficiently large, a subsolution, or lower barrier to the energy minimizer, can be constructed which is wholly positive in the interior of $B_r$. There are two main stages to the proof: first, we detail what it means to be a subsolution, second, we construct a subsolution with the desired properties. 

\subsection{Subsolutions}
We say that a function $w$ is a comparison subsolution, or lower barrier, to the energy minimizer, if $w$ satisfies 
\[ \Delta w \geq 2 w^{\gamma -1} \] 
whenever $w>0$ inside $B_r$, setting $w = u$ along $\partial B_r$ for Dirichlet boundary condition. This terminology is natural because, as we shall see, $u \geq w$ inside $B_r$. We now justify this definition. 

Let $v = \max(u,w)$, and consider the difference of energies given by 
\[ J(u) - J(v) = \int \frac{1}{2} \left(( \nabla u) ^2 - (\nabla v)^2\right)  + u^\gamma - v^\gamma dx \]
Since $u$ is the energy minimizer, we require that $J(u) - J(v) \leq 0$. However, we know that 
\begin{eqnarray*}
\frac{1}{2} \int (\nabla u)^2 - (\nabla v)^2 dx &=& \frac{1}{2} \int \nabla (u+v) \cdot \nabla (u-v) dx \\
&=& \frac{1}{2} \int (v-u) \Delta (u + v) dx \\
&\geq& \frac{1}{2} \int_{v>u>0}  (v - u) (\gamma u^{\gamma -1} + 2 v^{\gamma -1}) dx + \frac{1}{2} \int_{v > u = 0} 2 v^\gamma dx 
\end{eqnarray*}
We compare 
\[ \psi(s) = s^\gamma - t^\gamma \]
with
\[ \phi(s) = \frac{1}{2} (s-t) (\gamma t^{\gamma -1} + 2 s^{\gamma -1}) \]
for $s > t \geq 0$.  Clearly, $\phi(t) = \psi(t) = 0$, and a bit of calculation assures us that $\phi'(s) \geq \psi'(s)$ for all $s$. Thus, 
\begin{eqnarray*}
J(u) - J(v) &=& \int \frac{1}{2} \left(( \nabla u) ^2 - (\nabla v)^2\right)  + u^\gamma - v^\gamma dx \\
&\geq& \frac{1}{2} \int_{v>u>0}  (v - u) (\gamma u^{\gamma -1} + M v^{\gamma -1}) dx + \frac{1}{2} \int_{v > u = 0} M v^\gamma dx  \\
&+& \int u^\gamma - v^\gamma dx \\
&\geq& 0
\end{eqnarray*}
with equality holding in the last statement only if $v \equiv u$. Thus, $u \geq w$ for any comparison subsolution $w$. 

\subsection{Construction of a positive subsolution: Proof of Lemma \ref{lem:2nd-order-regularity}}
Our goal is to create a positive function $w$ on the unit ball $B_1$, such that $\Delta w \geq 2 w^{\gamma -1}$. We will define $w$ in three parts: 
\[ w(x) = w_1(x) + w_2(x) + w_3(x) \]

Let $\eta(x)$ be a radial, non-negative $C^\infty$ function satisfying $\eta \leq 1$ everywhere on $B_1$, $\eta = 1$ when $1 \geq |x| > \frac{3}{4}$, $\eta = 0$ when $|x| < \frac{1}{2}$, and $|\Delta \eta| \leq C' $ and $|\nabla \eta| \leq C' $ for some constant $C'$. We define 
\[ w_1(x) = \lambda \left(\eta(x) (1-|x|)^\beta + (1-\eta(x))\right) \]
and we claim that when $1 \geq |x| > \frac{7}{8}$, we have 
\[ \Delta w_1 \geq 2 w_1^{\gamma -1} \]
for the correct choice of $\lambda$. 

In the region in question, it is easy to see that 
\[ \Delta w_1(x) = \lambda \left(\beta (\beta -1) (1-r)^{\beta -2} - (n-1) \beta \frac{(1-r)^{\beta -1}}{r}\right) \]
We take the ratio of $\Delta w_1$ with $w_1$ in the region under concern, and we see that
\[ \frac{\Delta w_1}{w_1^{\gamma -1}} = \beta \lambda^{2-\gamma} \left( (\beta-1) \left(\frac{1-r}{r}\right) \right) \]
whence it is clear that a sufficiently large value of $\lambda$ will suffice. 

We set 
\[ w_2(x) = \mu (|x|^2 -1) \]
where we pick $\mu$ sufficiently large so that $\Delta w_2 > -\Delta w_1 + 1$ everywhere inside $B_\frac{7}{8}$. It is clear from our design that $-\Delta w_1$ is bounded inside the region in question. 

Finally, we let $w_3$ be the function which is harmonic inside $B_1$, with the same boundary values as the minimizer $u$ along $\partial B_1$. 

We claim that when $\fint_{\partial B_1} u dS$ is sufficiently large
\begin{enumerate}
\item $1 \geq 2 w^{\gamma -1}$ on $B_{\frac{7}{8}}$. 
\item $w \geq 0$ everywhere on $B_1$ (in fact, $w_2 + w_3 \geq 0$, while $w_1 \geq 0$ by construction). 
\end{enumerate}
The Harnack inequality tells us that, on $B_{\frac{7}{8}}$, we have
\[ w_3(x) \geq C \fint_{\partial B_1} u dS \]
To prove the first, it suffices if $\fint_{\partial B_1} u dS$ is so large compared to $\mu$ so that $2 \mu n > (C \fint_{\partial B_1} u dS)^{\gamma -1}$. To prove the second, we bound $w_3$ from below by a suitably scaled truncated fundamental solution, and then
\[ w_3(x) \geq \frac{C \fint_{\partial B_1} u dS}{ (\frac{8}{7})^{n-2} - 1} \left( \frac{1}{|x|^{n-2}} - 1 \right) \geq \mu (1 - |x|^2) \]
when $\fint_{\partial B_1} u dS$ is sufficiently large. 

Hence, we have $w \geq 0$ everywhere, and on $B_\frac{7}{8}$, we have 
\[ \Delta w \geq 1 \geq 2 w^{\gamma -1} \]
while on $B_1 \setminus B_\frac{7}{8}$ we have
\[ \Delta w \geq \Delta w_1 \geq 2 w_1^{\gamma -1} \geq 2 w^{\gamma -1} \]
(the last inquality is because $a^t + b^t \geq (a+b)^t$ when $a,b \geq 0, t < 0$. 

\section{Optimal regularity for the fractional case}
The goal of this section is to obtain the optimal regularity of energy minimizers $u$. In particular, we seek to show that $u$ grows away from the free boundary like a power of the distance. To be precise, $u(X) \leq C d^\beta$, where $d$ is the distance of $X$ to the free boundary, and $\beta = \frac{2\sigma}{2-\gamma}$ is the scaling factor obtained in \S \ref{scaling-section}.

As corollaries, we obtain some regularity results: restricted to $\{y=0\}$, $u$ lies in the Holder space $C^\beta$. If $\beta > 1$, we will prove that $u \in C^{1,\beta-1}$, which by abuse of notation we will still refer to as $C^\beta$. In the interior domain where $y>0$, we still obtain $u\in C^\beta$ when $\beta < 1$. When $\beta \geq 1$, we find that $u \in C^\alpha$ for any $\alpha < 1$. 

To obtain optimal growth, we consider a point $p_0$ which is at some distance (normalized to 1) from the nearest free boundary point, which we will take to be 0. We will use a variant of the boundary Harnack inequality due to Caffarelli to compare the value of $u(p_0)$ with some point $p_1$ in the interior of $B_+$, specifically, showing that $u(p_1) \geq M u(p_0)$. We can then use the regular Harnack inequality in the interior to show that, in a smaller ball about the free boundary point, the boundary values are controlled by $u(p_1)$, and hence by $u(p_0)$. We then prove that, if the boundary values in the upper half ball are too large at the right scale, then $u(0)$ is strictly positive, meaning that $u(p_0)$ cannot be too large. Subsequently, rescaling obtains the desired regularity. 

Our main tools to prove optimal growth are a variant of the Boundary Harnack Inequality\footnote{There are two types of results which are, confusingly, both called the Boundary Harnack Inequality in the literature. In addition to the result here, which states that values in the neighborhood of the boundary are uniformly bounded in terms of the value at an interior point, there is a closely related result which states that for two solutions which are both 0 along a stretch of the boundary, their ratios are locally Holder-continuous. We follow the naming convention of Caffarelli and Salsa \cite{Caffarelli-Salsa} and call the first result the Boundary Harnack Inequality, and the second result the Boundary Comparison Principle.}, and a lemma stating that if a particular weighted integral along the boundary of the a half-sphere is sufficiently large, then the minimizer of the energy taking boundary conditions along the sphere has a positive value at the center. 

\begin{thm}[Variant Boundary Harnack Inequality]\label{variant-Harnack}
Let $u$ be a non-negative solution of the equation $\nabla \cdot (A \nabla u) = 0$ in $B^+$, where $A$ satisfies the Muckenhoupt $A_2$ condition, with $\lim_{y\rightarrow 0} A \nabla u \cdot \hat{y} \geq 0$ along $\{y=0\}$, taking on some continuous boundary values along $\{y=0\}$, with $u(0,\frac{1}{4}) = 1$. Then inside $B^+_{\frac{1}{2}}$, we have $u \leq M$ for some constant $M(n,\sigma)$. 
\end{thm}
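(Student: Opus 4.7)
The plan is to reduce the theorem to a Harnack-type statement for a non-negative subsolution of an $A_2$-weighted divergence-form equation on the full ball $B_1$, via even reflection across $\Gamma$, and then to combine the interior Harnack inequality for $A_2$-weighted equations with the Moser--De Giorgi local boundedness estimate for non-negative subsolutions.

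First I would introduce the even reflection $\tilde u(x,y) := u(x,|y|)$. A direct integration by parts, splitting over $\{y>0\}$ and $\{y<0\}$ and noting that the conormal contribution on each side at $\Gamma$ equals $-g$ where $g := \lim_{y\to 0^+} y^a \partial_y u \ge 0$, shows that
\[
  \nabla \cdot \bigl(|y|^a \nabla \tilde u\bigr) \;=\; 2\,g\,\delta_\Gamma
\]
in the distributional sense on $B_1$. Thus $\tilde u$ is a non-negative subsolution of the $A_2$-weighted equation on $B_1$ and an honest weighted-harmonic function on $B_1 \setminus \Gamma$. I would then decompose $\tilde u = h + \mathcal G\nu$, where $\nu := 2g\,\delta_\Gamma$, $\mathcal G\nu(P) := \int G(P,Q)\,d\nu(Q)$ is the potential of $\nu$ against the Green's function $G$ of the weighted equation on $B_1$ with zero Dirichlet data, and $h$ is the weighted-harmonic extension to $B_1$ of the boundary values of $\tilde u$ on $\partial B_1$. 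Both summands are non-negative, and evaluating at $P_0 := (0, 1/4)$ gives $h(P_0) + \mathcal G\nu(P_0) = \tilde u(P_0) = 1$, so each is at most $1$ at $P_0$. The weighted Harnack inequality applied to the non-negative weighted-harmonic $h$ on $B_{3/4}$ yields $\sup_{B_{1/2}} h \le C\,h(P_0) \le C$.

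It remains to bound $\mathcal G\nu(P)$ for $P \in B^+_{1/2}$. For $P$ bounded away from $\Gamma$ I would apply interior Harnack to the non-negative weighted-harmonic function $P \mapsto G(P,Q)$ on $B_1 \setminus \{Q\}$, obtaining a uniform ratio bound $G(P,Q)/G(P_0,Q) \le C$ for every $Q \in \Gamma \cap B_{3/4}$; integrating against $\nu$ yields $\mathcal G\nu(P) \le C\,\mathcal G\nu(P_0) \le C$. For $P$ close to $\Gamma$ the ratio blows up as $P$ approaches its projection onto $\Gamma$, so I would switch to a direct application of the local maximum estimate for non-negative subsolutions,
\[
  \sup_{B_{r/2}(P)} \tilde u \;\le\; C\,\frac{\int_{B_r(P)} \tilde u\,|y|^a\,dX}{\int_{B_r(P)} |y|^a\,dX},
\]
with $r$ comparable to $\mathrm{dist}(P,\Gamma)$. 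Splitting the numerator into a piece away from $\Gamma$ (bounded by the previous step) and a thin tangential strip (of small $|y|^a$-weighted measure, since $|a|<1$) and iterating dyadically should close the estimate.

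The main obstacle is this last closure: ensuring that the dyadic iteration near $\Gamma$ produces a constant independent of the distance from $P$ to $\Gamma$. This balances the Harnack-chain degradation as one approaches $\Gamma$ against the integrability of the weight $|y|^a$, and is where the restriction $|a| < 1$ is essential. The other ingredients --- the reflection identity, the potential-theoretic splitting, and the Harnack bound on $h$ --- all follow from standard $A_2$-weighted elliptic theory already cited in the paper.
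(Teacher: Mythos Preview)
Your reflection computation is correct: the even extension $\tilde u$ satisfies $\nabla\cdot(|y|^a\nabla\tilde u)=2g\,\delta_\Gamma\ge 0$, so $\tilde u$ is a weighted \emph{subsolution} on $B_1$. But this is precisely where the Riesz decomposition breaks. For a subsolution the decomposition reads $\tilde u = h - \mathcal G\nu$, not $h+\mathcal G\nu$: the Green potential $\mathcal G\nu$ of a non-negative measure is a non-negative \emph{supersolution} vanishing on $\partial B_1$, so to match the sign of the interior measure you must subtract it. Consequently $h=\tilde u+\mathcal G\nu\ge\tilde u$, and at $P_0$ you only know $h(P_0)=1+\mathcal G\nu(P_0)$. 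Nothing in the hypotheses bounds $\mathcal G\nu(P_0)=\int_\Gamma G(P_0,Q)\,2g(Q)\,dQ$, since there is no a priori control on $g$; the Harnack step on $h$ therefore does not yield a universal constant. The subsequent Green-function comparison and the dyadic iteration near $\Gamma$ inherit the same problem, and the ``closure'' you flag as the main obstacle cannot be carried out from the available information: bounding a non-negative subsolution from above by its value at a single interior point is false in general, so additional structure is essential.

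The paper supplies that structure by exploiting that $\tilde u$ is an actual solution off $\Gamma$, not merely a subsolution. It argues by contradiction: if $\sup_{B_{1/2}}\tilde u=M_0$ is very large, the interior Harnack inequality in the explicit form $\sup_{B_r}u\le c(1-r)^{-p}\inf_{B_r}u$ (valid away from $\Gamma$) forces the maximum to occur within distance $d_0\sim M_0^{-1/p}$ of $\Gamma$. In a ball of radius $K d_0$ centred on $\Gamma$, a fixed fraction of the volume lies at height $>2d_0$, where Harnack bounds $\tilde u$ by $M_0/2^p$; the De Giorgi oscillation lemma for subsolutions then forces the maximum on that larger ball to exceed $\lambda M_0$ for a universal $\lambda>1$. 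Iterating produces points with geometrically growing values and geometrically shrinking heights, whose horizontal displacements sum to something controlled by $M_0^{-1/p}$; for $M_0$ large enough the sequence converges inside $B_{9/16}$ to a point on $\Gamma$ where $\tilde u$ blows up, contradicting the assumed continuity of the trace. This iteration is the same in spirit as your dyadic step, but it is driven by the oscillation lemma rather than by Green-function estimates, and the contradiction set-up sidesteps the sign obstruction entirely.
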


\begin{lem}[Minimizers with large averages are positive at the center]\label{reg-lem}
Let $u$ be a minimizer of the energy $J(u)$ inside $B_r \cap \{y>0\}$, taking non-negative boundary values along $\partial B_r \cap \{y>0\}$. $\exists c_0 > 0$ such that, if
\[ u|_{\partial B \cap \{y > \frac{r}{2} \}} \geq c_0 \]
then we have
\[ u(x,y) > 0 \forall (x,y) \in B_{\frac{r}{3}} \cap \{y\geq0\} \]
and in fact there exists a constant $c$ such that
\[ u(x,y) \geq c \forall (x,y) \in B_{\frac{r}{6}} \cap \{y \geq 0\}\]
\end{lem}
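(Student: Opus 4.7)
The plan is to mirror the strategy of the 2nd-order proof (Lemma~\ref{lem:2nd-order-regularity}) by constructing a positive subsolution $w$ on $B_r^+ := B_r \cap \{y > 0\}$ with $w \leq u$ on the Dirichlet boundary $\partial B_r \cap \{y > 0\}$ and $w \geq c$ on $B_{r/6}^+$, and then invoking an energy-comparison argument with $v = \max(u,w)$ to conclude $u \geq w$. The scaling of \S\ref{scaling-section} reduces everything to $r = 1$.

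First I would set up the comparison principle. Call $w$ a subsolution if $\nabla \cdot (y^a \nabla w) \geq 0$ in $B_1^+ \cap \{w > 0\}$, $\lim_{y \to 0^+} y^a \partial_y w \geq M w^{\gamma - 1}$ on $\Gamma \cap \{w > 0\}$ for some absolute $M \geq 2$, and $w \leq u$ on $\partial B_1 \cap \{y > 0\}$. Running the computation of $J(u) - J(v)$ exactly as in the 2nd-order case but with weighted integration by parts gives an interior contribution $\tfrac{1}{2}\int_{B_1^+}(v-u)\nabla\cdot(y^a\nabla(u+v))$ and a trace contribution on $\Gamma$ coming from the outward normal $-\hat y$. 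Substituting the Euler--Lagrange conditions $\nabla\cdot(y^a\nabla u) = 0$ and $\lim y^a \partial_y u = \gamma u^{\gamma-1}$ on $\{u>0\} \cap \Gamma$, the same pointwise convexity comparison between $s^\gamma - t^\gamma$ and $\tfrac{1}{2}(s-t)(\gamma t^{\gamma-1} + M s^{\gamma-1})$ used there forces $J(u) - J(v) \geq 0$, and minimality then gives $u \geq w$.

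The substantive step is producing $w$. The new ingredient relative to the 2nd-order setting is that $w$ must generate positive weighted Neumann flux on $\Gamma$; the observation that makes this possible is that $y^{2\sigma}$ is $L_a$-harmonic (since $a + 2\sigma - 1 = 0$) and satisfies $\lim_{y\to 0} y^a \partial_y(y^{2\sigma}) = 2\sigma$. I would take $w = w_1 + w_2 + w_3 + w_4$ where $w_3$ is the $L_a$-harmonic extension in $B_1^+$ with Dirichlet data $c_0\chi_{\{y > 1/2\}}$ on $\partial B_1 \cap \{y>0\}$ and zero weighted Neumann flux on $\Gamma$, for which the weighted Harnack inequality of Fabes--Kenig--Serapioni gives $w_3 \geq C c_0$ on $B_{1/2}^+$; $w_4 = \mu\, y^{2\sigma}\zeta(X)$ for a smooth radial cutoff with $\zeta \equiv 1$ on $B_{1/2}$ supported in $B_{3/4}$, supplying flux $2\sigma\mu$ on $\Gamma \cap B_{1/2}$; a boundary-adapted $w_1 = \lambda\bigl(\eta(|X|)(1-|X|)^p + (1-\eta(|X|))\bigr)$ with some $p > 1$ and a smooth cutoff $\eta$ as in the 2nd-order construction, whose $L_a$-image in the outer annulus is computed via $L_a f(|X|) = y^a[f''(r) + (n+a)f'(r)/r]$ for radial $f$; and a quadratic bump $w_2 = \mu'(|X|^2 - 1)$ with $L_a w_2 = 2\mu'(n+a+1)y^a > 0$. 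The Dirichlet inequality $w \leq u$ on $\partial B_1 \cap \{y>0\}$ is built in because all four pieces reduce to at most $w_3 \leq c_0 \leq u$ there.

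The step I expect to be the main obstacle is verifying the flux inequality $\lim y^a \partial_y w \geq M w^{\gamma-1}$ uniformly on $\Gamma \cap \{w > 0\}$ together with $\nabla\cdot(y^a\nabla w) \geq 0$ and $w \geq 0$ globally, through the transition annulus of the cutoff $\zeta$. On $\Gamma \cap B_{1/2}$ the flux $2\sigma\mu$ must dominate $M w_3^{\gamma-1}$; since $\gamma - 1 < 0$ and $w_3 \geq Cc_0$, choosing $c_0$ large relative to $\mu$ suffices. On $\Gamma \cap (B_1 \setminus B_{3/4})$ the flux supplied by $w_4$ vanishes, so one must arrange $w \leq 0$ there --- this is the analog of the truncated-fundamental-solution argument in the 2nd-order case, using $w_2$ large and negative near $\partial B_1$ and an upper bound on $w_3$ from a weighted Poisson kernel estimate. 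In the intermediate annulus the cross term $\mu y[\zeta'' + (n+2\sigma)\zeta'/r]$ appearing in $L_a w_4$ must be absorbed by $L_a w_2 > 0$, which is arranged by taking $\mu'$ large relative to $\mu$. Balancing $\lambda,\mu,\mu'$ as absolute constants depending only on $n,\sigma,\gamma$ and then choosing $c_0$ large closes the argument, and $w \geq c$ on $B_{1/6}^+$ follows directly from the size of $w_3$ there.
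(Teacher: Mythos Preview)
Your comparison principle is set up correctly, and the observation that $y^{2\sigma}$ is $L_a$-harmonic with $\lim_{y\to 0} y^a\partial_y(y^{2\sigma})=2\sigma$ is the right starting point for manufacturing Neumann flux. But the construction has a genuine gap at the edge of the positivity set of $w$ on $\Gamma$, and this is exactly where the fractional problem departs from the second-order one.

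Every radial piece $f(|X|)$ you use ($w_1$, $w_2$) has $y^a\partial_y f(|X|)=y^{a+1}f'(r)/r\to 0$ on $\Gamma$, and $w_3$ has zero flux by design; so the entire Neumann flux of $w$ comes from $w_4=\mu\,y^{2\sigma}\zeta$, which is \emph{bounded} (and vanishes for $|x|>3/4$). On the other hand, your $w$ must be strictly positive on $\Gamma\cap B_{1/3}$ (that is the conclusion you are after) and all four pieces vanish as $|x|\to 1$ along $\Gamma$, so by continuity $w(x,0)\to 0^+$ somewhere on $\Gamma$. At any such approach, $w^{\gamma-1}\to+\infty$ while your flux stays bounded (indeed equals $0$ outside $B_{3/4}$), so the subsolution inequality $\lim_{y\to 0} y^a\partial_y w\ge M w^{\gamma-1}$ fails in a whole neighbourhood. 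Your own plan ``arrange $w\le 0$ on $\Gamma\cap(B_1\setminus B_{3/4})$'' does not help: it only relocates the zero of $w$ on $\Gamma$ into the annulus $B_{3/4}\setminus B_{1/2}$, where the same blow-up obstruction applies. In the second-order proof this issue never arises because the subsolution condition $\Delta w\ge 2w^{\gamma-1}$ is an \emph{interior} inequality and $w_1=\lambda(1-|x|)^\beta$ supplies $\Delta w_1\sim(1-|x|)^{\beta-2}$ blowing up at the right rate near $\partial B_1$; a radial profile in $|X|$ cannot play the analogous role on $\Gamma$.

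The paper resolves this with a genuinely different construction. It writes $w=w_1+w_2$ with both pieces \emph{exactly} $L_a$-harmonic (no interior inequality needed). The crucial piece $w_1$ is the Caffarelli--Silvestre extension of a trace $\tilde w$ on $\mathbb{R}^n$ built from the Riesz potential $I_{2\sigma}\psi$ of
\[
\psi(x)=-(1-3|x|)^{\beta-2\sigma}\chi_{\{|x|<1/3\}},
\]
so that by the extension characterisation $\lim_{y\to 0}y^a\partial_y w_1=(1-3|x|)^{\beta-2\sigma}$ on $\Gamma\cap B_{1/3}$. The exponent is chosen because $\beta-2\sigma=\beta(\gamma-1)$, and near the edge $|x|\uparrow 1/3$ the paper shows (via the Hopf lemma for $\sigma$-harmonic functions) that $w\gtrsim(1-3|x|)^\sigma$; since $\beta>\sigma$ one gets $(1-3|x|)^{\beta(\gamma-1)}\ge C\,w^{\gamma-1}$ with room to spare after choosing $c_0$ large. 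The second piece $w_2$ carries the large boundary data, has zero flux on $\Gamma\cap B_{1/3}$, and is compared to an auxiliary barrier $Q$ to certify $w\ge (\lambda-1)Q>0$ on $\Gamma\cap B_{1/3}$ while $w\equiv 0$ on $\Gamma\setminus B_{1/3}$. The upshot is that the flux is engineered to blow up with the matching exponent precisely where $w$ vanishes on $\Gamma$; a bounded-flux ansatz like $\mu y^{2\sigma}\zeta$ cannot do this.
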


Together, these suffice to prove our result, namely: 
\begin{thm}\label{thm:opt-reg}
There exists a constant $K$ such that in any $B_r(x_0) \cap \Gamma$, where $x_0$ is a point such that $u(x_0) = 0$, such that
\[ |u(x) - u(x_0)| \leq K |x - x_0|^\beta \]
where $\beta = \frac{2 \sigma}{2-\gamma}$. 
\end{thm}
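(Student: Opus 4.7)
The plan is to prove Theorem~\ref{thm:opt-reg} by a contradiction/scaling argument that uses Theorem~\ref{variant-Harnack} and Lemma~\ref{reg-lem} as the two main inputs, exactly along the lines sketched in the introduction of \S4. Translate so that $x_0 = 0$, and aim to show $u(x) \leq K\,|x|^\beta$ for all $x \in \Gamma$ near $0$, for a constant $K = K(n,\sigma,\gamma)$ to be determined. Because the problem is scale-invariant (\S\ref{scaling-section}), the proof reduces to ruling out the existence of a sequence of rescalings in which this bound fails at a fixed radius.

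First, I would suppose for contradiction that there exist points $x_k \in \Gamma$ with $r_k := |x_k| \to 0$ and $u(x_k) > K\,r_k^\beta$, and form the rescaled minimizers
\[ v_k(X) = \frac{1}{r_k^\beta}\,u(r_k X), \]
which, by the scaling of \S\ref{scaling-section}, are minimizers of the same energy on the expanding half-balls $B^+_{1/r_k}$ with $0$ still a free-boundary point and $v_k(\bar{x}_k) > K$ at $\bar{x}_k := x_k/r_k \in \Gamma \cap \partial B_1$. Fix a convenient radius such as $R = 3$; each $v_k$ is non-negative on $B_R^+$, satisfies $\nabla\cdot(y^a\nabla v_k) = 0$ in the interior and, along $\Gamma$, $\lim_{y\to 0} y^a \partial_y v_k = \gamma v_k^{\gamma-1} \geq 0$ on $\{v_k > 0\}$, while on the contact set $v_k \equiv 0$, so the effective Neumann condition is non-negative.

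Next, apply the Variant Boundary Harnack Inequality (Theorem~\ref{variant-Harnack}), rescaled to $B_R^+$, with interior reference point $P = (0,1)$. Since $\bar{x}_k \in B^+_{R/2}$, this yields $v_k(\bar{x}_k) \leq M\, v_k(P)$ for a constant $M = M(n,\sigma)$, so that $v_k(P) \geq K/M$. The interior Harnack inequality of Fabes--Kenig--Serapioni (applicable because $y^a$ is an $A_2$ weight) then propagates this lower bound to the upper cap, giving a constant $c = c(n,\sigma)$ with
\[ v_k \geq \frac{cK}{M} \quad \text{on } \partial B_2 \cap \{y \geq 1\}. \]
Finally, choose $K$ large enough that $cK/M \geq c_0$, where $c_0$ is the threshold in Lemma~\ref{reg-lem} applied on $B_2 \cap \{y > 0\}$. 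That lemma then forces $v_k(0) > 0$, contradicting the fact that $0$ is a free-boundary point of $v_k$. Hence the desired estimate holds, and since $u(x_0) = 0$ this is exactly $|u(x) - u(x_0)| \leq K\,|x - x_0|^\beta$.

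The main obstacle, as I see it, is verifying that the $v_k$ actually satisfy the hypotheses of Theorem~\ref{variant-Harnack} \emph{globally on $B_R^+$} rather than only on the positivity set. One must check that the condition $\lim_{y\to 0} y^a \partial_y v_k \geq 0$ holds across the contact set in a distributional/variational sense, which follows from minimality: testing the energy against $\min(v_k,\varphi)$ for non-negative perturbations $\varphi$ supported near $\Gamma$ produces the required inequality. A secondary concern is that the implicit constants from the Variant Boundary Harnack and the interior Harnack be uniform in $k$; this is automatic because all rescaled problems live on a fixed half-ball $B_R^+$ and the constants in both statements depend only on $n,\sigma$. Once these verifications are in place, the rest of the argument is a clean assembly of the three ingredients.
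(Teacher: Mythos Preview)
Your proposal is correct and follows essentially the same approach as the paper: apply the Variant Boundary Harnack Inequality to bound the value at a point of unit distance by the value at an interior reference point, use the interior Harnack to propagate this to the upper cap of the half-sphere, and then invoke Lemma~\ref{reg-lem} to force positivity at the origin, contradicting the free-boundary assumption. The only cosmetic difference is that you phrase the contradiction via a sequence $r_k \to 0$ and rescaled minimizers $v_k$, whereas the paper works directly at unit scale and invokes scaling only at the end; the logic and the three key ingredients are identical.
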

\begin{proof}
Without loss of generality, let 0 be a point such that $u(0) = 0$, and $X_*$ be a point such that $|X_*| = 1$. We claim that $u(X_*) \leq K$. Suppose this is not true, that is to say, we can make $u(X_*)$ as large as we wish. Then by the variant boundary Harnack inequality applied to $B_2(0)$ we have 
\[ u(0,1) \geq \frac{u(X_*)}{M} \]
where $M$ is the constant from the variant boundary Harnack inequality. By applying the DeGiorgi-Nash-Moser Harnack inequality to $u$ in a region containing  
\[ u(x,y) \geq C u(X_*) \]
whence, by invoking Lemma \ref{reg-lem}, we have $u(0) > 0$, a contradiction on our original assumption. Thus, there exists a constant $K$ such that $u(X_*) \leq K$, as desired. By rescaling the problem, we recover our desired result. 
\end{proof}

\begin{cor}\label{thm:c-beta}
Let $u$ be an energy minimizer in a subset of $\mathbb{R}^{n+1}_+$ containing $B_1^+$, with 0 a free boundary point. Then considered as a function along the set $\{y=0\}$, $u$ is a $C^\beta$ function, with $\|u\|_{C^\beta(B_\frac{1}{2})} \leq C$, where $C$ depends only on $\sigma, \gamma$, and $n$. 
\end{cor}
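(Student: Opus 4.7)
The plan is to combine Theorem \ref{thm:opt-reg} (the optimal growth from free boundary points) with an interior regularity argument for the extension equation, via a standard dichotomy based on distance to the free boundary. The sup-norm bound $\|u\|_{L^\infty(\Gamma \cap B_{1/2})} \leq C$ is immediate: since $0$ is a free boundary point, Theorem \ref{thm:opt-reg} gives $u(x) \leq K|x|^\beta \leq K\, 2^{-\beta}$ for every $x \in \Gamma \cap B_{1/2}$. So the work is in the H\"older seminorm.

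Fix $x, x' \in \Gamma \cap B_{1/2}$ and let $d = \mathrm{dist}(x, \text{FB})$. In the \textbf{far} regime $|x - x'| \geq d/4$, pick a free boundary point $x_0$ with $|x - x_0| = d$, so that $|x' - x_0| \leq 5|x-x'|$; since $u(x_0) = 0$, two applications of Theorem \ref{thm:opt-reg} give $|u(x) - u(x')| \leq u(x) + u(x') \leq K d^\beta + K (5|x - x'|)^\beta \leq C|x - x'|^\beta$. In the \textbf{near} regime $|x - x'| < d/4$, rescale via $v(X) = d^{-\beta} u(x + dX)$ on $B_1^+$. By the scaling identified in \S\ref{scaling-section} this preserves the minimizer property, and by Theorem \ref{thm:opt-reg} $v$ is uniformly bounded on $B_1^+$. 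Moreover, since $B_d(x) \cap \Gamma$ lies entirely in the positivity set, $v > 0$ on $\Gamma \cap B_1$; the Harnack inequality for the weighted equation $\nabla\cdot(y^a \nabla v) = 0$ then bounds $v$ from below on $\Gamma \cap B_{3/4}$, making the Neumann-type datum $\gamma v^{\gamma - 1}$ bounded there.

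It remains to upgrade this bounded, positive solution of the degenerate Neumann problem to $C^\beta$. The weighted De Giorgi--Nash--Moser theory (\S\ref{ap:degiorgi}) promotes $v \in L^\infty$ to $v \in C^{\alpha_0}$ for some $\alpha_0 > 0$. One then iterates the Schauder-type gain for the extension problem: whenever $v \in C^{\alpha_k}$, the bounded-below quantity $v^{\gamma - 1}$ lies in $C^{\alpha_k}$ as well, and the corresponding Schauder estimate for $\nabla\cdot(y^a \nabla \cdot)$ with prescribed $\lim_{y\to 0} y^a \partial_y v$-data boosts $v$ to $C^{\alpha_k + 2\sigma}$ (interpreted as $C^{1, \cdot}$ once the exponent exceeds $1$). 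Because $\beta = \tfrac{2\sigma}{2 - \gamma} < 2\sigma$, a single iteration already clears the target exponent, giving $\|v\|_{C^\beta(B_{1/2})} \leq C$ universally. Undoing the rescaling yields $|u(x) - u(x')| \leq C d^\beta (|x-x'|/d)^\beta = C|x-x'|^\beta$, completing both cases.

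The main obstacle is the Schauder-bootstrap step on the extension equation: one must know that Neumann-type data in $C^{\alpha}$ yields a solution in $C^{\alpha + 2\sigma}$ (with the correct quantitative dependence), and, when $\beta > 1$, that this upgrades to the claimed $C^{1,\beta-1}$ statement, which requires differentiating the equation and re-running the argument on first-order difference quotients of $v$. Once this Schauder machinery is in place, the geometric dichotomy above assembles it into the stated estimate $\|u\|_{C^\beta(B_{1/2})} \leq C(n, \sigma, \gamma)$.
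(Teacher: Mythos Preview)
Your overall architecture --- split into a ``far'' regime handled directly by Theorem~\ref{thm:opt-reg} and a ``near'' regime handled by rescaling plus interior regularity --- is the right idea and is close in spirit to what the paper does in \S\ref{subsec:c^beta-estimates}. But the near-regime argument has a real gap at the Harnack step. After you rescale by $d = \mathrm{dist}(x,\mathrm{FB})$ and set $v(X) = d^{-\beta}u(x+dX)$, you know $v \le C$ on $B_1^+$ (from optimal regularity) and $v > 0$ on $\Gamma \cap B_1$ (since $B_d(x)$ lies in the positivity set). Harnack, however, only yields $\inf v \ge c\,\sup v$; it does \emph{not} produce a \emph{universal} lower bound on $v$, because you have no universal lower bound on any single value of $v$. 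Indeed $v(0) = d^{-\beta}u(x)$ can be arbitrarily small while $d$ stays fixed. Without that lower bound the Neumann datum $\gamma v^{\gamma-1}$ is unbounded and your Schauder bootstrap cannot start. (A universal lower bound on $v$ at this scale is precisely nondegeneracy, which is proved only in \S5, \emph{after} this corollary, and itself relies on optimal regularity.) Note also that $v$ does not satisfy $\nabla\cdot(y^a\nabla v)=0$ across $\{y=0\}$: the even reflection is merely a supersolution, so only the weak Harnack is available, which again gives nothing universal without a lower input.

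The paper sidesteps this by choosing the dilation parameter differently: it rescales by $\lambda \sim u(p,0)^{1/\beta}$ rather than by $d$. The variant Boundary Harnack inequality (Theorem~\ref{variant-Harnack}) lifts the known value $u(p,0)$ to an interior point, and then Lemma~\ref{reg-lem} converts that interior value into a genuine uniform lower bound $u_\lambda \ge C > 0$ on a smaller ball. With this lower bound the paper splits $u_\lambda$ into a Riesz-potential piece and a zero-Neumann piece and reads off the pointwise bounds $|\nabla_x u(p,0)| \le C\,u(p,0)^{(\beta-1)/\beta}$ and $|\nabla_{xx} u(p,0)| \le C\,u(p,0)^{(\beta-2)/\beta}$ (Lemma~\ref{lem:c-beta}); the final assembly (Theorem~\ref{thm:c-beta-k}) is then a case split on the size of $u(x_1,0)$ versus $|x_1-x_2|^\beta$, which is equivalent to your near/far dichotomy once the scale is chosen this way. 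Your argument would be repaired by replacing the bare Harnack appeal with this combination of Theorem~\ref{variant-Harnack} and Lemma~\ref{reg-lem} at the correctly chosen scale.
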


\begin{cor}\label{thm:global-beta}
Let $u$ be an energy minimizer in a subset of $\mathbb{R}^{n+1}_+$ containing $B_1^+$ $B_1^+$, with 0 a free boundary point. Then in $B_\frac{1}{2}^+$, $u$ is a $C^\beta$ function, with $\|u\|_{C^\beta(B_\frac{1}{2}^+)} \leq C$, where $C$ depends only on $\sigma, \gamma$, and $n$, if $\beta < 1$. If $\beta \leq 1$, $u$ is a $C^\alpha$ function for any $\alpha < 1$, with 
\[ \|u\|_{C^\alpha(B_\frac{1}{2}^+} \leq C(\sigma, \gamma, n, \alpha) \]
\end{cor}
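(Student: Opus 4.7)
The plan is to propagate the pointwise growth estimate of Theorem \ref{thm:opt-reg} to a global H\"older estimate using the interior regularity for the weighted divergence-form equation $\nabla\cdot(y^a\nabla u)=0$ supplied by the Fabes-Kenig-Serapioni theory. Note first that the argument proving Theorem \ref{thm:opt-reg} in fact yields $u(X)\le K\delta(X)^\beta$ for every $X\in B_{1/2}^+$, where $\delta(X)$ is the distance from $X$ to the free boundary: one simply applies the theorem at the free-boundary point nearest to $X$.

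Given $X_1,X_2\in B_{1/2}^+$ with $r=|X_1-X_2|$ and $d_i=\delta(X_i)$, assume $d_1\le d_2$. In the \emph{far regime} $r\ge d_1/4$, since $d_2\le d_1+r\le 5r$ the growth bound gives
\[
u(X_i)\le K d_i^\beta\le C r^\beta,
\]
so $|u(X_1)-u(X_2)|\le 2Cr^\beta$ immediately, with no use of the equation. In the \emph{close regime} $r<d_1/4$, both points lie in $B_{d_1/2}(X_1)$, which is disjoint from the free boundary and on which $u\le Cd_1^\beta$. Rescaling $v(Z):=u(X_1+d_1Z)/d_1^\beta$ and shifting the vertical coordinate by $y_1/d_1$ turns $u$ into a solution of the standard weighted equation on a unit-scale region, uniformly bounded by $C$. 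Applying the Fabes-Kenig-Serapioni H\"older estimate to $v$ and rescaling back produces
\[
|u(X_1)-u(X_2)|\le Cd_1^{\beta-\alpha_0}r^{\alpha_0},
\]
where $\alpha_0$ is the exponent furnished by FKS.

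The main obstacle is exponent matching in the close regime. If $\alpha_0\ge\beta$, the above rewrites as $Cr^\beta(r/d_1)^{\alpha_0-\beta}\le Cr^\beta$ using $r\le d_1$, yielding the claimed $C^\beta$ bound when $\beta<1$. When $\alpha_0<\beta$ the naive rescaling cannot recover $C^\beta$; the fix is either to iterate on a geometrically shrinking sequence of scales, absorbing the deficit via the growth estimate at each level, or to import the $C^\beta$ trace regularity of Corollary \ref{thm:c-beta} and propagate it upward into $\{y>0\}$ using Theorem \ref{variant-Harnack} together with an explicit $y^{2\sigma}$-barrier. The case $\beta\ge 1$ exhibits a genuine obstruction: the weight $y^a$ precludes $C^{1,\alpha'}$ continuation across $\Gamma$ uniformly in $\alpha'$, so the method only delivers $C^\alpha$ for each $\alpha<1$, obtained by taking the FKS exponent in the interior estimate close enough to $1$, with constants depending on $\alpha$ and blowing up as $\alpha\uparrow 1$.
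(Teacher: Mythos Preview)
Your far-regime argument is fine, but the close-regime step contains a genuine gap. You rescale in the ball $B_{d_1/2}(X_1)$, where $d_1$ is the distance from $X_1$ to the \emph{free boundary}, and then invoke the Fabes--Kenig--Serapioni interior H\"older estimate. The problem is that this ball is only disjoint from the free boundary, not from $\Gamma=\{y=0\}$: nothing prevents $y_1\ll d_1$. When $B_{d_1/2}(X_1)$ meets $\Gamma$, the even reflection of $u$ does \emph{not} solve the homogeneous equation $\nabla\cdot(|y|^a\nabla u)=0$ there; along $\Gamma\cap\{u>0\}$ one has the nonzero Neumann condition $\lim_{y\to 0}y^a\partial_y u=\gamma u^{\gamma-1}$, so the interior FKS estimate is simply unavailable in that region. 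Your proposed shift of the vertical coordinate does not remove this obstruction. Even in the genuinely interior sub-case $y_1\ge d_1/2$, where the rescaled coefficient is uniformly elliptic, your exponent-matching discussion is incomplete: the two ``fixes'' you sketch (dyadic iteration, or propagating the trace regularity upward via a $y^{2\sigma}$ barrier) are each nontrivial programs, and neither is actually carried out.

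The paper takes a different route that sidesteps both difficulties. Instead of organizing the dichotomy around the distance to the free boundary, it organizes it around the height $y$. First it proves a pointwise gradient bound $|\nabla u(x_0,y_0)|\le C\,u(x_0,y_0)/y_0$ by pure rescaling (Lemma~\ref{lem:point-gradient-g}). Second, and crucially, it proves a \emph{vertical growth} lemma $|u(x,y)-u(x,0)|\le C y^\beta$ (Lemma~\ref{lem:beta-growth-g}) by exploiting the nonlinear structure: $u^{2-\gamma}$ is a weighted subsolution with a constant Neumann datum, so a direct barrier gives growth of order $y^{2\sigma}$ for $u^{2-\gamma}$, hence $y^\beta$ for $u$. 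The H\"older estimate (Theorem~\ref{thm:global-beta-k}) then compares $|X_1-X_2|$ to a power of $y$: when $y$ is small one splits $u(X_1)-u(X_2)$ through the trace values and uses Lemma~\ref{lem:beta-growth-g} together with Corollary~\ref{thm:c-beta}; when $y$ is large one uses the gradient bound. No FKS H\"older exponent enters, so there is no exponent-matching issue, and the case $\beta\ge1$ falls out by the same argument with $|X_1-X_2|^{1-\alpha}$ in place of $|X_1-X_2|^{1-\beta}$.
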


The proof of these statements, and a discussion of the $C^\beta$ norm estimates, are covered in \ref{subsec:c^beta-estimates}. 

\subsection{Variant Boundary Harnack Inequality: Proof of Theorem \label{variant-Harnack}}
The proof of this variant of the boundary Harnack inequality follows the same lines as the standard proof of the boundary Harnack inequality provided by Caffarelli et alia \cite{C-F-M-S}. The proof uses two classical facts from the De Giorgi-Nash-Moser theory, which was extended to the theory of degenerate elliptic equations with $A_2$ weights by Fabes, Kenig, and Serapioni \cite{F-K-S}, a class that includes the equation $\nabla \cdot (y^a \nabla v) = 0$. 

The first fact is the De Giorgi-Nash-Moser Harnack inequality, which states that for a non-negative solution in $B_1$, 
\[ \sup_{B_r} u \leq c (1-r)^{-p} \inf_{B_r} u \]
where $p>0$. 

The second fact is the De Giorgi oscillation lemma, which says that a subsolution $v$ in the unit ball, or, in our case, $B_1 \cap \{y > 0\}$, satisfying
\begin{itemize}
\item $v \leq 1$
\item $|\{v \leq 0 \}| = a > 0$ (where the absolute value represents Lebesgue measure)
\end{itemize}
has the property that
\[ \sup_{B_{1/2} \cap \{y>0\}} v \leq \mu(a) < 1 \]
With these two facts in hand, we proceed with the proof. 

\begin{proof}
The proof is by contradiction. Let $u(0,\frac{1}{2}) = 1$. Suppose there is no $M$ which can bound values of $u$ inside the half-ball. Then $u$ achieves its maximum in it $M_0 > M$, at some point $X_0=(x_0,y_0)$. The Harnack inequality tells us that the distance to the boundary, $y_0$, satisfies
\[ y_0 \leq d_0 = \left(\frac{c}{M}\right)^{\frac{1}{p}} \]
We now proceed with a construction we repeat for each successive value of $n$, starting with $n=0$:

Consider now $B_{K d_n}(x_n,0) \cap \{y>0\}$ (the hemisphere centered on the projection of $X_n$ to the plane $y=0$), for $K$ large, greater than, say, 4. For points satisfying $y > 2 d_n$, we have, by the interior Harnack inequality, that
\[ u(X) \leq c (2d_n)^{-p} = \frac{M_n}{2^p} \]
The set $\{y > 2 d_n\}$ has measure at least a fixed fraction of $B_{K d_n}(x_n,0) \cap \{y>0\}$, independent of $K$. Thus, if we let $M_{n+1} = \sup_{B_{K d_n}(x_n,0) \cap \{y>0\}} u$, we see that inside, say, $B_{2d_n}(x_n,0) \cap \{y>0\}$, by the oscillation lemma, we have that 
\[ M_n(1-2^{-p}) \leq \sup_{B_{2 d_n}(x_n,0) \cap \{y>0\}} (u - \frac{M_n}{2^p}) \leq \mu(K) (M_{n+1} - \frac{M_n}{2^p}) \]
with $\mu \rightarrow 0$ as $K$ becomes large. Thus
\[ M_{n+1} > M_n ( 2^{-p} + \frac{1-2^{-p}}{\mu(K)}) \]
We pick $K$ sufficiently large that the factor on the right hand side is some fixed positive $\lambda > 1$. We let $X_{n+1}$ be the point where $u(X_{n+1}) = M_{n+1}$ inside $B_{K d_n}(x_n,0) \cap \{y>0\}$. 

Thus we have a sequence of points $X_n$. Notice that $K$ does not change, and hence neither does $\lambda$. As $n\rightarrow \infty$, we have
\[ u(X_n) \geq \lambda^n M_0 \rightarrow \infty\]
while 
\[ y_n \leq d_n = (cM_0)^{-\frac{1}{p}} \lambda^{-\frac{n}{p}} \rightarrow 0.\] 
The distances between the points satisfy
\[ |X_{n+1}-X_n| \leq K d_n \]
and so the sequence has
\[ d(X_0, X_n) \leq K \sum d_n  \leq K \frac{(c M)^{-\frac{1}{p}}}{1-\lambda^{-\frac{1}{p}}} \]
which can be made to converge inside $B_\frac{9}{16}$ if we take our initial $M$ sufficiently large, giving us a sequence of points $X_n$, with limit points where $u$ blows up along $y=0$. This contradicts our original assumption that $u$ continuously assumes values along the boundary $\{y=0\}$. 
\end{proof}

\subsection{The center is positive when the boundary is large}
Lemma \ref{reg-lem} consists of demonstrating that when $c_0$ is sufficiently large, a comparison subsolution which is purely positive in $B_\frac{1}{3}$ can be built, which serves as a lower barrier to the solution. 

\subsubsection{Definition of a comparison subsolution}

We seek sufficient conditions for a function to be a comparison \textit{subsolution} of our variational problem. One way to do this is to show that, for a subsolution $w$, where $u>w$, we can improve the energy: if $v = \max (u,w)$, then 
\[ J(u) - J(v) = \frac{1}{2} \int y^a \left(|\nabla u|^2  - |\nabla v|^2 \right) dy dx + \int_\Gamma u^\gamma - v^\gamma dx \geq 0 \]
since $u$ is the energy minimizer. Clearly, the second term is negative; our approach lies on setting conditions so that the first term dominates the second. 

We assume $u$, $v$ sharing the same Dirichlet boundary conditions along $\partial B$, and integrate by parts:
\begin{eqnarray*}
\int \frac{1}{2} y^a(|\nabla u|^2 - |\nabla v|^2) dx dy &=& \frac{1}{2} \int y^a (\nabla u + \nabla v) \cdot (\nabla u - \nabla v) dx dy \\
&=& -\frac{1}{2} \int (u - v) \nabla \cdot (y^a \nabla (u+v)) dx dy - \frac{1}{2} \int_\Gamma (u-v) \lim_{y\rightarrow 0} y^a \partial_y (u+v) dx 
\end{eqnarray*}
We define $v = \max(u,w)$, where $w$ satisfies
\[ \lim_{y \rightarrow 0} y^a \partial_y w \geq M w^{\gamma -1} \]
along $\Gamma$, and $\nabla \cdot (y^a \nabla w) = 0$ in $B_1$. Then $\lim_{y\rightarrow 0} \partial_y v \geq M v^{\gamma -1}$ on those portions where $v > u$, whence we can write
\[ \int y^a(|\nabla u|^2 - |\nabla v|^2) dx dy \geq \int_{\Gamma \cap \{v> u > 0\}} (v-u) (\gamma u^{\gamma -1} + M v^{\gamma -1}) dx + \int_{\Gamma \cap \{ v > u = 0\}} Mv^\gamma dx \]
Recall now that
\[ J(u) - J(v) = \frac{1}{2} \int_B y^a(|\nabla u|^2 - |\nabla v|^2) dx dy  + \int_\Gamma u^\gamma - v^\gamma dx \]
Since $u$ is the energy minimizer, we need for this term to be negative. 

We consider the functions
\[ \psi(s) = s^\gamma - t^\gamma \]
and 
\[ \phi(s) = \frac{1}{2} (s-t)(\gamma t^{\gamma -1} + M s^{\gamma -1}) \]
Clearly, $\phi(t) = \psi(t) = 0$. We now examine their behavior in the range $0 \leq t < s$. When $s > t$, 
\[\psi'(s) = \gamma s^{\gamma -1} \leq \phi'(s) = \frac{1}{2} \gamma t^{\gamma -1} + \gamma M s^{\gamma -1} + \gamma (1-\gamma) M t s^{\gamma -2}\]. Thus, $\phi(s) > \psi(s)$ when $s>t$, and we can write 
\begin{eqnarray*}
J(u) - J(v) &=& \frac{1}{2} \int_B y^a(|\nabla u|^2 - |\nabla v|^2) dx dy  + \int_\Gamma u^\gamma - v^\gamma dx \\
&\geq& \frac{1}{2}\int_{\Gamma \cap \{v> u > 0\}} (v-u) (\gamma u^{\gamma -1} + M v^{\gamma -1}) dx + \frac{1}{2} \int_{\Gamma \cap \{v > u = 0\}} M v^\gamma dx + \int_\Gamma u^\gamma - v^\gamma dx  \\
&\geq& 0 
\end{eqnarray*}
with the last equality being strict if $v$ differs from $u$ on a set with positive measure. This is satisfied if we set $M = 2$. 

Hence, whenever such a $w$ exists, we can decrease the energy of $u$, a contradiction on the definition of $u$ as the energy minimizer; that is to say, we have $u \geq w$.  

\subsubsection{Construction of such a subsolution: Proof of Lemma \ref{reg-lem}}
By the results of the previous subseciton, it suffices to construct a comparison subsolution $w$, which is positive on $B_\frac{1}{3}$ and greater than a fixed constant on $B_\frac{1}{6}$. 

We want our subsolution $w$ to have three properties: we would like our $w$ to take the same values as $u$ along $\partial B \cap \{y>0\}$, we would like it to satisfy the conditions
\[ \nabla \cdot (y^a \nabla w) = 0 \] 
in $B_+$, and along $\Gamma$ we would like
\[ \lim_{y\rightarrow 0} y^a \partial_y w \geq 2 w^{\gamma -1} \]
wherever $w>0$, and finally we want $w>0$ in $B_{\frac{1}{3}} \cap \Gamma$. We will define our $w$ in two parts. 
\[ w = w_1 + w_2 \]

We set $w_1$ by setting, for $x \in \mathbb{R}^n$, 
\[ \psi(x) = \begin{cases}
		0 & |x| > \frac{1}{3} \\
		-(1-3|x|)^{\beta-2\sigma} & |x| \leq \frac{1}{3}
             \end{cases}
\]
Let
\[ (I_{2\sigma}\psi) (x) = C_{n,\sigma} \int \frac{\psi(z)}{|x-z|^{n-2\sigma}} dz\] 
be the Riesz potential of $\psi$. We state a technical lemma relying on classical results in the theory of fractional integration and Riesz potentials, which leave to appendix \ref{ap:Riesz}:
\begin{lem}\label{lem:Riesz}
$(I_{2\sigma}\psi)(x)$ is well defined and continuous as a function, radial, has fractional Laplacian equal to $\psi(x)$, and furthermore, there exists $\delta > 0$ such that
\[ |(I_{2\sigma}\psi)(r) - (I_{2\sigma}\psi)(\frac{1}{3})| \leq C (1-3r)^\alpha \]
for $\frac{1}{3} > r = |x| > \frac{1}{3}-\delta$ where $\min(\beta,1) > \alpha > \sigma$.  
\end{lem}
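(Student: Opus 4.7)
The plan is to dispatch the first four assertions (well-definedness, continuity, radiality, and the fractional Laplacian identity) via standard Riesz potential theory, and then to concentrate on the boundary Holder estimate, which is the real content of the lemma. Integrability of $\psi$ on $B_{1/3}$ follows from $\sigma < \beta < 2\sigma$, which gives $\beta - 2\sigma > -\sigma > -1$; combined with the local integrability of the kernel $|x-z|^{-(n-2\sigma)}$, the convolution defines $(I_{2\sigma}\psi)(x)$ at every $x$. Radiality is immediate from the rotational invariance of both $\psi$ and the kernel, and continuity holds separately in the interior (where $\psi$ is smooth away from the sphere) and the exterior (where $\psi \equiv 0$, so $I_{2\sigma}\psi$ is $\sigma$-harmonic); continuity across $|x|=1/3$ will drop out of the Holder estimate itself. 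The identity $(-\Delta)^\sigma (I_{2\sigma}\psi) = \psi$ is the classical composition rule for Riesz potentials, valid distributionally and promoted pointwise wherever $\psi$ is continuous.

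For the Holder estimate, I would use radial symmetry to take $x = re_1$ with $r < 1/3$, set $h = 1/3 - r$ and $x_0 = (1/3)e_1$, and write
\[
(I_{2\sigma}\psi)(x) - (I_{2\sigma}\psi)(x_0) = C_{n,\sigma} \int_{B_{1/3}} \psi(z)\left(|x-z|^{-(n-2\sigma)} - |x_0-z|^{-(n-2\sigma)}\right) dz.
\]
I would then split the integration into a near-field $\{|z-x_0|<2h\}$ and a far-field $\{|z-x_0|\ge 2h\}$, introduce local coordinates $z = x_0 + u$ in which the sphere is essentially a hyperplane and $(1-3|z|)$ is comparable to $-3u_1$, and rescale via $u = hv$. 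In the far-field, a mean-value bound on the kernel difference yields a factor $Ch|x_0-z|^{-(n-2\sigma+1)}$; combined with $|\psi(z)| \le C(1-3|z|)^{\beta-2\sigma}$, the rescaling produces a prefactor $h^\beta$ times a $v$-integral which converges absolutely when $\beta < 1$, and which, when $\beta \ge 1$, contributes at most an extra $h^{1-\beta}$ (with a logarithm at $\beta=1$) from the natural cutoff $|v|\sim 1/h$. In the near-field each of the two kernel terms is estimated separately: both $|x-z|$ and $|x_0-z|$ are comparable to $h$ after rescaling, $(1-3|z|) = O(h)$, and the same rescaling yields a prefactor $h^\beta$ times a convergent integral over $\{|v|<2\}$, where integrability at $v_1 = 0$ uses $\beta-2\sigma > -1$ and integrability at $v = -e_1$ uses $2\sigma > 0$.

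Assembling the pieces gives $|(I_{2\sigma}\psi)(x) - (I_{2\sigma}\psi)(x_0)| \le Ch^\beta$ when $\beta < 1$, and $O(h\log(1/h))$ or $O(h)$ when $\beta \ge 1$; either bound is controlled by $Ch^\alpha$ for any $\alpha < \min(\beta,1)$, and the range $\sigma < \alpha < \min(\beta,1)$ is nonempty because $\sigma < \beta$ and $\sigma < 1$. The main obstacle will be the delicate bookkeeping where the singular behavior of $\psi$ at $|z|=1/3$ meets the near-singularity of the Riesz kernel at $z=x$; the scaling heuristic ``convolution with $|x|^{-(n-2\sigma)}$ raises the boundary exponent by $2\sigma$'' goes through cleanly precisely because $\beta - 2\sigma > -\sigma$, which is exactly the margin that keeps the rescaled $v$-integrals convergent and yields a rate strictly better than $h^\sigma$, as the application to the subsolution will require.
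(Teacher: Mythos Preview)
Your approach is correct and is genuinely different from the paper's. The paper does not carry out a direct kernel estimate at all: instead it invokes a theorem of Adams to the effect that if $\psi\in L^1$ and the fractional maximal function $M_{\alpha_1}\psi(x)=\sup_{\rho>0}\rho^{\alpha_1-n}\int_{B_\rho(x)}|\psi|$ is bounded, then $I_{\alpha_1}\psi\in\mathrm{BMO}$, and consequently $I_{\alpha_1+\alpha_2}\psi\in C^{\alpha_2}$ for any $0<\alpha_2<1$. The paper then just checks $\int_{B_\rho}|\psi|\le C\rho^{\,n+\beta-2\sigma}$ near the sphere $|x|=1/3$, which is exactly the maximal-function bound needed, and reads off $I_{2\sigma}\psi\in C^{\alpha}$ for every $\alpha<\min(\beta,1)$.

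Your near/far splitting with the rescaling $u=hv$ reproduces by hand the mechanism behind Adams' theorem in this specific situation: the maximal-function bound $\int_{B_\rho}|\psi|\lesssim\rho^{\,n+\beta-2\sigma}$ is precisely what makes your near-field integral over $\{|v|<2\}$ converge, and the far-field dyadic behavior you identify (convergent when $\beta<1$, giving $O(h)$ or $O(h\log(1/h))$ otherwise via the cutoff at $|v|\sim 1/h$) is the scale-by-scale content of the BMO-to-$C^{\alpha}$ step. The trade-off is clear: the paper's route is a two-line citation but imports a nontrivial result; yours is self-contained and in fact sharper, since for $\beta<1$ you obtain the endpoint rate $h^\beta$ rather than merely $h^\alpha$ for $\alpha<\beta$. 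Both leave the identity $(-\Delta)^\sigma I_{2\sigma}\psi=\psi$ and continuity of $I_{2\sigma}\psi$ at the level of standard Riesz-potential facts, which is adequate here.
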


We let $b(x)$ be equal to $I_{2\sigma} \psi$ on $\mathbb{R}^n \setminus B_{\frac{1}{3}}$, and have $(-\Delta)^{\sigma} b = 0$ inside $B_\frac{1}{3}$, and then we set 
\[ \tilde{w}(x) = (I_{2\sigma} \psi)(x) - b(x) \]
$b$ is the solution to the standard Dirichlet problem for the fractional Laplacian; its existence is guaranteed by the standard theory (see, e.g., Landkof \cite{Landkof}). Notice that $\tilde{w}$ is $\sigma$-subharmonic. This means it is negative inside $B_{\frac{1}{3}}$, and 0 outside of it. Furthermore, the maximum principle for $\sigma$-harmonic functions (Lemma \ref{lem:Hopf}) applied to $b(x)$ tells us that there is a constant such that
\[ |b(x) - b(\frac{1}{3})| \leq C (1-3r)^\sigma \]

Now we let
\[ w_1(x,y) = C_{n,\sigma} \int \frac{y^{2\sigma} \tilde{w}(z)}{\left((x-z)^2 + y^2\right)^\frac{n+2\sigma}{2}} dz \]
where $z$ ranges over $\mathbb{R}^n$. This is, of course, the Poisson kernel for the fractional Laplacian convolved with $\tilde{w}$, giving us a $w_1$ that satisfies $\nabla \cdot (y^a \nabla w_1) = 0 $ in the interior, which takes on the values of $\tilde{w}$ along $\{y=0\}$, satisfying $\lim_{y\rightarrow 0} y^a \partial_y w_1(x,0) = \psi(x)$ (by the extension result of Caffarelli and Silvestre \cite{C-S}). 

For the sake of future estimations, it is helpful to bound $-w_1$ from above by an auxiliary function. We let $q_0 = 2 \sup (-w_1)$, and we let 
\[ q(x) = \begin{cases}
           q_0 & |x| < \frac{1}{3}-\delta \\
           0 & |x| > \frac{1}{3}
          \end{cases}
\]
and let $q$ satisfy $(-\Delta)^\sigma q = 0$ on the annular ring $\frac{1}{3}-\delta < |x| < \frac{1}{3}$. The comparison principle for fractional-harmonic functions then tells us that $q \geq -w_1$ on $\Gamma$. We extend $q$ to $\mathbb{R}^{n+1}_+$ in the usual way via the Poisson kernel. 
\[ Q(x,y) =  C_{n,\sigma} \int \frac{y^{2\sigma} q(z)}{\left((x-z)^2 + y^2\right)^\frac{n+2\sigma}{2}} dz \]
Thus, we have
\begin{prop}
 $Q(x,y) \geq |w_1(x,y)|$ in the upper half-space $\mathbb{R}^{n+1}_+$. 
\end{prop}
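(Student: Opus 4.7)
The plan is to reduce this half-space inequality to a pointwise comparison of the boundary traces, which the author has already established, and then invoke positivity of the Poisson kernel. First I would confirm that $w_1 \leq 0$ everywhere on $\mathbb{R}^{n+1}_+$, so that $|w_1| = -w_1$. Since $\psi \leq 0$ on $\mathbb{R}^n$, the author has already observed that $\tilde w = I_{2\sigma}\psi - b$ is $\sigma$-subharmonic, vanishes outside $B_{1/3}$, and is non-positive inside. Because $w_1$ is obtained by convolving $\tilde w$ against the strictly positive Poisson kernel $K_\sigma(x-z,y) = C_{n,\sigma}\, y^{2\sigma}\bigl((x-z)^2 + y^2\bigr)^{-(n+2\sigma)/2}$, it follows that $w_1 \leq 0$ on $\mathbb{R}^{n+1}_+$.

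Next, I would observe that both $Q$ and $-w_1$ are Poisson extensions with respect to the same kernel $K_\sigma$: namely, $Q$ is the extension of $q$ and $-w_1$ is the extension of $-\tilde w$. The author has already arranged, via the choice $q_0 = 2\sup(-w_1)$ together with the comparison principle applied to $q$ on the annulus $\{1/3 - \delta < |x| < 1/3\}$, that $q(z) \geq -\tilde w(z)$ for every $z \in \Gamma$. Subtracting the two representations I would write
\[
Q(x,y) - \bigl(-w_1(x,y)\bigr) \;=\; \int_{\mathbb{R}^n} K_\sigma(x-z,y)\,\bigl(q(z) + \tilde w(z)\bigr)\, dz,
\]
and the integrand is a product of two non-negative factors, so the right-hand side is non-negative. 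This immediately gives $Q(x,y) \geq -w_1(x,y) = |w_1(x,y)|$ at every point of the upper half-space.

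There is no substantive obstacle here beyond noting the sign of the Poisson kernel and the previously established boundary inequality; the argument is pure linearity. The only bookkeeping to verify is absolute convergence of the defining integrals, which is immediate because $q$ and $\tilde w$ are bounded with support contained in $\overline{B_{1/3}}$, while $K_\sigma(\cdot,y)$ is integrable on $\mathbb{R}^n$ for each fixed $y > 0$. Alternatively, if one prefers to avoid the explicit kernel representation, one can form $h := Q + w_1$, note that $h$ solves $\nabla \cdot (y^a \nabla h) = 0$ in $\mathbb{R}^{n+1}_+$ with $h(x,0) = q(x) + \tilde w(x) \geq 0$ and $h \to 0$ at infinity, and apply the maximum principle from the Fabes--Kenig--Serapioni theory to conclude $h \geq 0$; but the kernel argument above is cleaner and makes the role of the boundary inequality most transparent.
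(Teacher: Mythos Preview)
Your argument is correct and is precisely the approach the paper has in mind: the paper states the proposition immediately after establishing $q \geq -w_1$ on $\Gamma$ and leaves the extension to the half-space implicit, and your Poisson-kernel positivity (equivalently, maximum principle) argument is exactly the missing line.
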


We set $w_2$ with boundary conditions
\begin{equation*} 
 w_2(X) = \begin{cases} 
	u - w_1(X) & X \in {\partial B_1 \cap \{y>0\}} \\
        0 & X \in {\Gamma \setminus B_{\frac{1}{3}}}
       \end{cases}
\end{equation*}
and let it satisfy the problem
\[ \nabla \cdot ( y^a \nabla w_2) = 0 \]
when $X \in B_+$, and 
\[ \lim_{y\rightarrow 0} y^a \partial_y w_2 = 0 \] 
when $X \in \Gamma \cap B_{\frac{1}{3}}$. 

Now we need to estimate properties of $w= w_1+w_2$, which we do by comparing $w_2$ to $Q$
\begin{prop}
For any $\lambda > 0$, a sufficiently large value of $c_0$ will ensure that $w_2(X) \geq \lambda Q(X)$ inside $B_+$
\end{prop}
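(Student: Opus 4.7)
The plan is to combine linearity of $\nabla\cdot(y^a\nabla\cdot)$, the mixed-boundary maximum principle, and the Harnack inequality for degenerate elliptic equations with Muckenhoupt $A_2$ weights. First I would isolate the $c_0$-dependence of $w_2$. Let $\mathcal{H}$ solve the same mixed BVP as $w_2$ but with Dirichlet data $\chi_{\{y>1/2\}}$ on $\partial B_1\cap\{y>0\}$, zero on $\Gamma\setminus B_{1/3}$, and $\lim_{y\to 0}y^a\partial_y\mathcal{H}=0$ on $\Gamma\cap B_{1/3}$; this $\mathcal{H}$ depends only on $n,\sigma$. Since $u\geq c_0\chi_{\{y>1/2\}}$ on $\partial B_1$ by hypothesis and $-w_1\geq 0$, the boundary data of $w_2-c_0\mathcal{H}$ is nonnegative on the Dirichlet part and its weighted normal derivative vanishes on the Neumann part, so the mixed-BVP maximum principle gives $w_2\geq c_0\mathcal{H}$ throughout $B_+$.

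Next, I would bound $\mathcal{H}$ from below and $Q$ from above by constants depending only on $n,\sigma$. The Poisson representation together with $q\leq q_0$ gives $Q\leq q_0=2\sup|w_1|$ on $\mathbb{R}^{n+1}_+$, which is independent of $c_0$. For $\mathcal{H}$, the Neumann-$0$ condition on $\Gamma\cap B_{1/3}$ permits even reflection across that set, so $\mathcal{H}$ extends to a nonnegative solution of a degenerate elliptic equation with weight $|y|^a$ in a neighborhood of $\Gamma\cap B_{1/3}$. The Fabes--Kenig--Serapioni Harnack inequality, combined with a standard Harnack chain starting from a fixed interior point of $B_+$ where $\mathcal{H}$ is positive, yields $\mathcal{H}\geq c_{\mathcal{H}}>0$ on any compact set $K$ with $K\subset B_+\cup(\Gamma\cap B_{1/3})$, where $c_{\mathcal{H}}$ depends on $n,\sigma,K$ only. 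Choosing $c_0\geq\lambda q_0/c_{\mathcal{H}}$ then gives $w_2\geq c_0\mathcal{H}\geq\lambda q_0\geq\lambda Q$ on $K$.

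The hard part will be extending the bound uniformly up to the spherical portion of the boundary $\partial B_1\cap\{0<y<1/2\}$, where $\mathcal{H}$ vanishes by its Dirichlet condition while $Q$ need not, so the Harnack-chain constant $c_{\mathcal{H}}$ degenerates near that set. To close this gap I would supplement the $c_0\mathcal{H}$ estimate with the $-w_1$ contribution to the boundary data of $w_2$: on $\partial B_1$, $w_2=u-w_1\geq -w_1$, and both $-w_1$ and $Q$ are Poisson extensions of nonnegative functions supported in $\overline{B}_{1/3}$, hence both decay like $y^{2\sigma}$ as $y\to 0$ near the corner $\partial B_1\cap\Gamma$ with comparable leading coefficients governed by the previously-established $Q\geq|w_1|$. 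Combining the interior Harnack estimate with this boundary decay estimate then yields the inequality $w_2\geq\lambda Q$ throughout $B_+$.
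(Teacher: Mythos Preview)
Your decomposition $w_2\geq c_0\mathcal{H}$ is fine, and so is the Harnack-chain argument on compact subsets of $B_+\cup(\Gamma\cap B_{1/3})$. The genuine gap is exactly where you flag it, near $\partial B_1\cap\{0<y\leq 1/2\}$, but your proposed patch does not close it. On that piece of boundary $\mathcal{H}$ vanishes identically (it is its Dirichlet data), so $c_0\mathcal{H}=0$ there regardless of $c_0$; meanwhile $Q$ is strictly positive there (it is the Poisson extension of a nontrivial nonnegative $q$), with $Q\sim C q_0\,y^{2\sigma}$ as $y\to 0$. Hence $\inf_{B_+}(\mathcal{H}/Q)=0$ and no choice of $c_0$ makes $c_0\mathcal{H}\geq\lambda Q$ hold throughout $B_+$. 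Your supplement $w_2\geq -w_1$ on $\partial B_1$ points the wrong way: the only relation you have is $-w_1=|w_1|\leq Q$, so this gives $w_2\geq -w_1\leq Q$, which cannot yield $w_2\geq\lambda Q$ for large $\lambda$. The $-w_1$ term carries no $c_0$ factor, and that is precisely what is needed near the corner.

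The paper supplies the missing $c_0$-scaling at the corner by using the $u$ part of the boundary data directly: it invokes the boundary comparison principle (for $A_2$-weighted degenerate elliptic equations) to obtain $u(x,y)\geq C\,c_0\,y^{2\sigma}$ along $\partial B_1\cap\{y>0\}$, from the hypothesis $u\geq c_0$ on $\{y>1/2\}$. Since also $Q(x,y)\leq C\,q_0\,y^{2\sigma}$ on $\partial B_1$ from the Poisson formula, this gives $w_2\geq u\geq \lambda Q$ on the entire spherical portion of the boundary once $c_0$ is large. The comparison on $\Gamma$ is then handled separately (zero on $\Gamma\setminus B_{1/3}$, Harnack on $B_{1/3-\delta}$, and a Hopf-type argument on the annular ring where both $w_2$ and $Q$ have zero weighted normal derivative), after which the ordinary maximum principle finishes. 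If you want to salvage your reference-function approach, you should replace $\chi_{\{y>1/2\}}$ by a boundary profile that already carries the $y^{2\sigma}$ decay down to $y=0$; the boundary comparison principle is precisely what justifies that $u$ dominates such a profile with a constant proportional to $c_0$.
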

\begin{proof}
Since both functions satisfy $\nabla \cdot y^a (\nabla v) = 0$ inside $B_+$, it suffices to examine their relative behavior along $\partial_{B_1} \cap \{y > 0\}$ and along $\Gamma$. 

Along $\partial_{B_1}\cap \{y > 0\}$, the boundary comparison principle (see \cite{F-K-J-2} for a proof in the case of $A_2$ weighted degenerate elliptic equations) tells us that 
\[ u(x,y) \geq C c_0 y^{2\sigma} \]
since $u \geq c_0$ when $y> \frac{1}{2}$. We also have from the formula that that $Q(x,y) \leq C q_0 y^{2\sigma}$. Hence, we just need $c_0$ to be sufficiently large.

The behavior along $\Gamma$ is a touch trickier. We divide our analysis of the behavior of $w_2$ along $\Gamma$ into two parts: the first part concerns the interior of $B_{\frac{1}{3} -\delta}$, where $\delta$ is from Lemma \ref{lem:Riesz}, and the other in the thin annular ring $\frac{1}{3}-\delta < |x| < \frac{1}{3}$. Clearly, $\Gamma \setminus B_{\frac{1}{3}}$ is not taken care of, since $w_2$ and $Q$ are identically 0 there. 

We note that $w_2|_{\partial B \cap \{y>0\}} > 0$, so that we can apply the Harnack inequality in the interior. 
We bound $w_2$ from below by a function $\hat{w_2}$, which we define as follows: let 
\begin{equation*} 
 \hat{w_2}(X) = \begin{cases} 
	u - w_1(X) & X \in {\partial B_1 \cap \{y>0\}} \\
        0 & X \in {\Gamma}
       \end{cases}
\end{equation*}
and let it satisfy the problem
\[ \nabla \cdot ( y^a \nabla \hat{w_2}) = 0 \]
Clearly $0\leq \hat{w_2} \leq w_2$ in the domain. Since we know $w_2$ in $\{y > \frac{1}{2} \}$ is greater than $c_0$, it follows that so too  $\hat{w_2}$ at interior points, such as, say, $X=(0,\frac{1}{6})$, is linear in $c_0$, and hence so is $w_2$. We apply the Harnack inequality to $w_2$ inside the ball $B_\frac{1}{3}$ to see that $w_2 \geq C c_0$ in $B_{\frac{1}{3}-\delta}$ can be made as large as we wish, where $\delta$ is from lemma \ref{lem:Riesz}. Thus, inside $B_{\frac{1}{3}-\delta} \cap \Gamma$, we can choose $c_0$ so that $w_2 \geq Q$. 

In the annular ring proper, both $Q(x,y)$ and $w_2(x,y)$ satisfy $\lim_{y\rightarrow 0} y^a \partial_y v = 0$, whence we can invoke the Hopf lemma to see that $w_2 \geq Q$. 
\end{proof}

\begin{cor}
\[ w = w_1 + w_2 \geq (\lambda -1) Q \]
\end{cor}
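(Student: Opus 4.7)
The plan is to combine the two bounds on $w_1$ and $w_2$ that have already been established, since the corollary is essentially an immediate consequence of them. Explicitly, I would argue pointwise in $\mathbb{R}^{n+1}_+$: write $w(X) = w_1(X) + w_2(X)$ and bound $w_1$ from below by $-|w_1|$.

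First, I would invoke the earlier proposition stating that $Q(X) \geq |w_1(X)|$ throughout the upper half-space, which gives $w_1(X) \geq -Q(X)$. Next, I would apply the proposition immediately preceding this corollary, which says that for any prescribed $\lambda > 0$, by choosing $c_0$ sufficiently large one has $w_2(X) \geq \lambda Q(X)$ inside $B_+$. Adding these two pointwise inequalities yields
\[ w(X) = w_1(X) + w_2(X) \geq -Q(X) + \lambda Q(X) = (\lambda - 1) Q(X), \]
which is the stated conclusion.

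There is no real obstacle here: the entire content is packed into the two propositions already proved, and the corollary only records the combined form that will be used in later estimates (presumably with $\lambda$ chosen large enough that $\lambda - 1$ is a useful positive constant when we need $w$ to dominate a multiple of $Q$). The only minor point to make explicit is that both inequalities hold on the same domain $B_+$, so that the addition is legitimate at every point where we will subsequently need the bound.
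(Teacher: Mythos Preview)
Your proof is correct and matches the paper's approach exactly: the corollary is stated without proof in the paper because it follows immediately by adding the two preceding propositions $w_1 \geq -Q$ and $w_2 \geq \lambda Q$, precisely as you do. The paper even spells out this same addition explicitly in the proof of the subsequent lemma.
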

and hence by making $\lambda$ sufficiently large we can make $w \geq C q_0$ in $B_{\frac{1}{6}}$. 

We close our construction with a lemma, which shows that $w$ has all the desired properties of a subsolution. 

\begin{lem}\label{lem:positive}
\[ w_1 + w_2 \geq 0 \] 
and for $\lambda$ from the previous proposition sufficiently large (which is really to say for $c_0$ sufficiently large), we have 
\[ \lim_{y\rightarrow 0} y^a \partial_y w \geq 2 w^{\gamma -1} \]
along $\Gamma$, wherever $w \ne 0$. 
\end{lem}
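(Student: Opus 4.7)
The first claim is immediate from the corollary preceding this lemma: since $w = w_1 + w_2 \geq (\lambda - 1)Q$ and $Q \geq 0$ on $B_+$, any choice $\lambda \geq 1$ yields $w \geq 0$. All the remaining work therefore goes into the Neumann-type inequality on $\Gamma \cap \{w \ne 0\}$.

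Because $w_1$ and $w_2$ both vanish on $\Gamma \setminus B_{1/3}$, the set $\{w \ne 0\}$ on $\Gamma$ is contained in $B_{1/3}$. There, by construction $\lim_{y\to 0} y^a \partial_y w_2 = 0$, and the Caffarelli--Silvestre correspondence identifies $\lim_{y\to 0} y^a \partial_y w_1(x,0)$ with a fixed positive multiple of $(1 - 3|x|)^{\beta - 2\sigma}$, a positive quantity that blows up as $|x| \to 1/3$. Rewriting the target inequality $\lim_{y\to 0} y^a \partial_y w \geq 2 w^{\gamma-1}$ as a pointwise lower bound on $w$ and using the algebraic identity $(2\sigma - \beta)/(1-\gamma) = 2\sigma/(2-\gamma) = \beta$, the whole problem reduces to showing
\[ w(x, 0) \geq C_* (1 - 3|x|)^\beta \quad \text{for all } x \in \Gamma \cap B_{1/3}, \]
where $C_*$ is a fixed constant that will be absorbed by taking $c_0$ (and hence $\lambda$) large.

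I would prove this lower bound in two sub-regions. On the inner set $|x| < \tfrac{1}{3} - \delta$, with $\delta$ from Lemma \ref{lem:Riesz}, the Dirichlet datum $q$ of $Q$ equals the constant $q_0$, so $w(x,0) \geq (\lambda - 1)q_0$, a large constant for $\lambda$ large, which dominates the bounded quantity $C_*(1 - 3|x|)^\beta$ trivially. On the annular ring $\tfrac{1}{3} - \delta < |x| < \tfrac{1}{3}$, the trace $q$ is $\sigma$-harmonic and vanishes at $|x| = \tfrac{1}{3}$, so Lemma \ref{lem:Hopf} yields $q(x) \geq c(1 - 3|x|)^\sigma$ and therefore $w(x, 0) \geq (\lambda - 1)c(1 - 3|x|)^\sigma$. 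Because $\sigma < \beta$ (equivalently, $\gamma > 0$), the inequality $(1 - 3|x|)^\sigma \geq (1 - 3|x|)^\beta$ holds throughout $B_{1/3}$, and increasing $\lambda$ delivers the required estimate on the ring as well.

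The main obstacle is precisely this exponent matching: the construction only works because the Hopf exponent $\sigma$ governing the vanishing of $w_2$ near $\partial B_{1/3}$ is strictly smaller than the scaling exponent $\beta$ forced on us by the Neumann datum of $w_1$. The specific power $\beta - 2\sigma$ used in the definition of $\psi$ was engineered so that the algebra $(2\sigma - \beta)/(1-\gamma) = \beta$ holds; once this balance is in place, every remaining floating constant can be swallowed by a single large choice of $c_0$, and the subsolution inequality holds wherever $w \neq 0$.
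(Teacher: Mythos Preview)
Your proposal is correct and follows essentially the same route as the paper: positivity via the corollary $w\ge(\lambda-1)Q$, then splitting $\Gamma\cap B_{1/3}$ into the inner core $|x|<\tfrac13-\delta$ (where $Q=q_0$) and the annular ring (where the Hopf lemma gives $q(x)\ge c(1-3|x|)^\sigma$), and finally comparing the exponents $\sigma<\beta$. Your device of first rewriting the target inequality as the single lower bound $w(x,0)\ge C_*(1-3|x|)^\beta$ via the identity $(2\sigma-\beta)/(1-\gamma)=\beta$ is a clean way to package the same computation the paper carries out directly.
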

\begin{proof}
Since $w_1 \geq -Q$ and $w_2 \geq \lambda Q$, we have 
\[ w = w_1 + w_2 \geq w_2 - Q \geq 0 \]

On $\Gamma \cap B_{\frac{1}{3} - \delta}$, we have 
\begin{eqnarray*}
\lim_{y\rightarrow 0} y^a \partial_y w &=& \lim_{y\rightarrow 0} y^a \partial_y w_1 \\
&=& (1-3|x|)^{\beta - 2\sigma}  \\
&\geq& (3\delta)^{\beta - 2\sigma} \\
\end{eqnarray*}
By setting $\lambda$ sufficiently large, we can attain
\begin{eqnarray*}
(3\delta)^{\beta - 2\sigma} &\geq& 2 (\lambda q_0)^{\gamma -1} \\
&\geq & 2 w_2^{\gamma -1} \\
&\geq & 2 w^{\gamma-1}
\end{eqnarray*}

On the annular ring, we invoke Lemma \ref{lem:Hopf} to see that there is a constant $c$ such that 
\[ q(x) \geq c (1-3|x|)^{\sigma} \]
whence we derive the relation
\[ w(x) \geq (\lambda -1) c (1-3|x|)^\sigma \]
By setting $\lambda$ sufficiently large, we can make
\[ 2^{\frac{1}{\gamma -1}} (\lambda-1) c (1-3|x|)^\sigma \geq (1-3|x|)^\beta \]
in the entire annular ring, whence we can attain 
\begin{eqnarray*}
\lim_{y\rightarrow 0} y^a \partial_y w &=& \lim_{y\rightarrow 0} y^a \partial_y w_1 \\
&=& (1-3|x|)^{\beta - 2\sigma}  \\
&=& (1-3|x|)^{\beta(\gamma -1)}  \\
&\geq& 2 ((\lambda-1) c (1-3|x|)^\sigma)^{\gamma -1} \\
&\geq& 2 w^{\gamma -1} 
\end{eqnarray*}
\end{proof}

\subsection{$C^\beta$ estimates for $u$}\label{subsec:c^beta-estimates}
The goal of this subsection is to provide a proof for Corollaries \ref{thm:c-beta} and \ref{thm:global-beta}. 

For the first, we will do this by analyzing the effective equation satisfied by $u$, restricted to $\Gamma$, in the neighborhood of a free boundary point. The estimates follow the spirit of the analysis conducted in Section III of \cite{P-1}: we will first show that appropriate Holder norms of $u$ satisfy certain pointwise estimates in terms of the value of $u$ itself, and then put these estimates together to obtain a uniform $C^\beta$ estimate. 

For the second, we will follow a similar procedure, first using interior estimates to get pointwise bounds on $\nabla u$ when $y > 0$, and then tie these together with the $C^\beta$ estimate along $\{y=0\}$ to get a uniform $C^\beta$ estimate. 

\subsubsection{Along $\{y=0\}$}
\begin{lem}\label{lem:c-beta}
$\{u>0\} \cap \Gamma$ is open with respect to $\mathbb{R}^n$, on which $u$ satisfies 
\[ \lim_{y\rightarrow 0} y^a \partial_y u = \gamma u^{\gamma -1} \]
and furthermore $u\in C^\infty(\{u>0\})$, such that the tangential derivatives of $u$, which we represent by $\nabla_x u$, satisfy
\[ |\nabla_x u(p,0)| \leq C (u(p,0))^{\frac{\beta -1}{\beta}} \]
and, moreover, that the tangential second derivatives of $u$, which we represent by $\nabla_{xx} u$, satisfy
\[ |\nabla_xx u(p,0)| \leq C (u(p,0))^{\frac{\beta -2}{\beta}} \]
where $p$ is any point in $ \{u> 0\} \cap \{y=0\}$. 
\end{lem}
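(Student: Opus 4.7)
The plan is to establish the four assertions of the lemma in the order stated, with the two pointwise derivative estimates being the substantive content. Openness of $\{u>0\}\cap\Gamma$ is immediate from the continuity of $u|_\Gamma$ granted by Theorem \ref{thm:opt-reg}. To derive the Euler--Lagrange relation on this open set, I would fix $\phi\in C^\infty_c(\{u>0\})$, so that $u\geq c>0$ on $\mathrm{supp}\,\phi$ and $u^\gamma$ is a smooth function of $u$ there. Computing $\frac{d}{dt}J(u+t\phi)|_{t=0}=0$ and integrating by parts against the weighted equation satisfied in $B^+$ yields the pointwise Neumann-type identity $\lim_{y\to 0}y^a\partial_y u=\gamma u^{\gamma-1}$ along $\Gamma\cap\{u>0\}$. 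Smoothness then follows by a standard bootstrap: in the positivity set $u$ is bounded away from $0$ locally (Harnack for the $A_2$-weighted equation), so the nonlinearity $\gamma u^{\gamma-1}$ is smooth in $u$, and one iterates Schauder-type boundary estimates for the weighted equation---or, equivalently via the Caffarelli--Silvestre extension, Schauder estimates for the semilinear fractional equation $(-\Delta)^\sigma u=-c\gamma u^{\gamma-1}$---to lift the regularity from $C^\beta$ all the way to $C^\infty$ in $\{u>0\}$.

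The main claim is the two pointwise derivative estimates, which I would prove by a scaling argument leveraging the scaling invariance of the problem (\S\ref{scaling-section}). Fix $p=(p_0,0)\in\Gamma\cap\{u>0\}$, set $m:=u(p)$, $\lambda:=m^{1/\beta}$, and define
\[ w(X)=\lambda^{-\beta}\,u(p+\lambda X). \]
By the scaling analysis of \S\ref{scaling-section}, $w$ is again a minimizer of the same energy on the rescaled domain and satisfies the same Euler--Lagrange system, with $w(0)=1$. Applying Theorem \ref{thm:opt-reg} at the free boundary point nearest $p$ gives $m\leq C d^\beta$, where $d$ is the distance from $p$ to the free boundary, so $d/\lambda\geq c>0$ for a universal $c$. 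Hence $w$ is a positive solution on a half-ball $B_c^+$ of universal radius around the origin. The weighted Harnack inequality pins $w$ between two positive constants on $B_{c/2}^+$, and invoking the smoothness established above in this universal neighborhood yields universal bounds $|\nabla_x w(0)|\leq C_1$ and $|\nabla_{xx} w(0)|\leq C_2$ depending only on $n,\sigma,\gamma$. Unscaling via $\nabla_x u(p)=\lambda^{\beta-1}\nabla_x w(0)$ and $\nabla_{xx} u(p)=\lambda^{\beta-2}\nabla_{xx} w(0)$, and substituting $\lambda=m^{1/\beta}$, produces the two estimates exactly as claimed.

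The hard part is the boundary regularity used in the last step: turning $w(0)=1$ and $w>0$ on a universal half-ball into universal $C^2$-type tangential bounds at the boundary point $(0,0)$, where the weighted equation is degenerate and the Neumann condition is nonlinear. This amounts to a boundary Schauder estimate for the $A_2$-weighted divergence form equation, which is not quoted explicitly in the excerpt; one can either derive it from the Fabes--Kenig--Serapioni machinery with additional care for the degenerate weight, or sidestep the weighted setting by invoking the Caffarelli--Silvestre identification and applying fractional Schauder theory to the semilinear equation satisfied by $u|_\Gamma$. The crucial structural point is that the rescaled $w$ is bounded away from $0$ on a universal ball, which converts the otherwise singular boundary nonlinearity $w^{\gamma-1}$ into genuinely smooth data and thereby decouples the pointwise estimate at $p$ from the geometry of the free boundary.
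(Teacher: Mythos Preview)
Your overall strategy---rescale by $\lambda=u(p,0)^{1/\beta}$ and prove universal tangential derivative bounds for the rescaled minimizer $w$ at the origin---is exactly the paper's, and your use of Theorem~\ref{thm:opt-reg} to place the free boundary at universal distance $\geq c$ in the rescaled picture is a clean way to set things up. The gap is in the sentence ``the weighted Harnack inequality pins $w$ between two positive constants on $B_{c/2}^+$.'' The Fabes--Kenig--Serapioni Harnack applies to nonnegative \emph{solutions} on balls contained in $\{y>0\}$; to reach the boundary you must reflect $w$ evenly, and because the weighted Neumann data $\lim_{y\to 0}y^a\partial_y w=\gamma w^{\gamma-1}$ is strictly positive, the even reflection is only a \emph{subsolution} of $\nabla\cdot(|y|^a\nabla\,\cdot)=0$. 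The local-maximum estimate for subsolutions yields an upper bound, but no lower bound on $\Gamma$ follows from $w(0,0)=1$ alone: nothing rules out a spike at the origin with $w$ tiny elsewhere on $\Gamma\cap B_c$, in which case $w^{\gamma-1}$ blows up and no universal Schauder-type estimate is available. Knowing merely that $w>0$ on $B_c\cap\Gamma$ (which is all the distance bound gives) is not the same as knowing $w\geq c'>0$ there, and the latter is exactly what your last paragraph correctly identifies as the ``crucial structural point.''

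The paper closes this gap not with Harnack but with the minimizer-specific Lemma~\ref{reg-lem}: first the variant boundary Harnack (Theorem~\ref{variant-Harnack}) converts $w(0,0)=1$ into a lower bound $w(0,1)\geq 1/M$ at an interior height; then, after choosing $\lambda$ so that this interior value exceeds the threshold $c_0$ of Lemma~\ref{reg-lem}, one obtains $w\geq c'>0$ on all of $B_{1/6}\cap\{y\geq 0\}$. Only then is $\gamma w^{\gamma-1}$ bounded, and the paper finishes not by invoking an abstract boundary Schauder theorem but by splitting $w=u_1+u_2$ with $u_1$ the Riesz potential of the now-bounded Neumann data and $u_2$ satisfying a zero weighted Neumann condition (so its even reflection is an honest solution), and applying the estimates of \cite{C-S-S} and \cite{S-1} to each piece. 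Your openness argument via continuity is correct but weaker than the paper's, which gets openness as a byproduct of this same quantitative positivity.
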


\begin{proof}
If $p \in \Gamma$ is some point such that $u(p,0) > 0$, then the variant Boundary Harnack inequality tells us that $u(p,\delta) \geq C u(p,0)$. If we make the usual dilation by $\lambda$ about $p$, we see that $u_\lambda(x,y) = \frac{1}{\lambda^\beta} u(\lambda(x-p)+p,\lambda y)$ satisfies 
\[ u_\lambda(p,1) > c \frac{1}{\lambda^\beta} u(p,0) \]
For $\lambda$ sufficiently small, $c \lambda^{-\beta} u(p,0)$ can be made larger than the constant needed in Lemma \ref{reg-lem}, whence $u_\lambda \geq C > 0 $ in $B_{\frac{1}{6}}(p,0)$, or, in the original $u$, we can say $u \geq C \lambda^\beta > 0$ in $B_\frac{\lambda}{6} (p,0)$ for $\lambda$ sufficiently small. 

Hence, the set $u>0$ is open with respect to $\Gamma$, and, on every set $D$ compactly contained within $\{u>0\}$, is bounded away from zero, hence 
\[ \lim_{y\rightarrow 0} y^a \partial_y u = \gamma u^{\gamma -1} \in L^\infty(D) \] 

Consider now $B=B_\frac{1}{6}(p,0)$. We let $u_1$ be the Riesz potential of $-\gamma u_\lambda^{\gamma -1} \chi_{B}$, and set $u_2 = u_\lambda - u_1$. Then on $B$ we have
\[ \lim_{y\rightarrow 0} y^a \partial_y u_1 = \gamma u_\lambda^{\gamma -1} \]
while 
\[ \lim_{y\rightarrow 0} y^a \partial_y u_2 = 0 \]
and both of the $u_i$ satisfy $\nabla \cdot (y^a \nabla u_i) = 0$. 

Since the tangential derivatives of $u_2$ also satisfy the same equations as $u_2$, we have that $u_2 \in C^\infty(B)$, with 
\[ |\nabla_x u_2 (p,0)| \leq C \]
and 
\[ |\nabla_{xx} u_2 (p,0)| \leq C \]
in $B_{\frac{1}{8}}(p,0)$, by the estimates found in \cite{C-S-S}. Similarly, we can use the potential-theoretic estimates found in \cite{S-1} iteratively to show that $u_1 \in C^\infty(B)$, with 
\[ |\nabla_x u_1 (p,0)| \leq C \]
and 
\[ |\nabla_{xx} u_1 (p,0)| \leq C. \]

Hence, after rescaling, we can say that, for the tangential derivatives of $u$, we have 
\[ |\nabla_x u(p,0)| \leq C \lambda^{\beta -1} \]
and 
\[ |\nabla_{xx} u(p,0)| \leq C \lambda^{\beta -2} \]

How small need $\lambda$ be? Our condition was that $c \lambda^{-\beta} u(p,0) \geq c_0$, whence we see that $\lambda = (C u(p,0))^\frac{1}{\beta}$ suffices. The conclusion follows. 
\end{proof}

Up to now, it has been possible to treat the cases where $\beta \geq 1$ and $\beta < 1$ as if they were the same. For the remaining two theorems, we have to recognize the difference. The result here is proved very much the style of \cite{P-1} and \cite{C-K}. 
\begin{thm}\label{thm:c-beta-k}
Suppose $\beta < 1$. Then there exists a $K = K(\delta, n, \beta)$, such that if $x_1,x_2 \in \mathbb{R}^n$ are in a $\delta$-neighborhood of the free boundary, we have 
\[|u(x_1,0)-u(x_2,0)| \leq K |x_1-x_2|^\beta \]
If $\beta \geq 1$, there exists a $K = K(\delta, n, \beta)$, such that if $x_1,x_2 \in \mathbb{R}^n$ are in a $\delta$-neighborhood of the free boundary, we have 
\[ |\nabla_x u(x_1,0)- \nabla_x u(x_2,0)| \leq K |x_1-x_2|^{\beta-1} \]
\end{thm}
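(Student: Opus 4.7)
My plan is to prove both estimates by the standard dichotomy on distance to the free boundary, combining the pointwise derivative estimates of Lemma \ref{lem:c-beta} with the growth bound from Theorem \ref{thm:opt-reg}. For a point $p$ in the positivity set on $\Gamma$, write $d(p) = \mathrm{dist}(p,\{u=0\}\cap\Gamma)$. Applying Theorem \ref{thm:opt-reg} at a nearest contact point to $p$ gives the growth bound $u(p,0)\leq C\,d(p)^\beta$, and feeding this into Lemma \ref{lem:c-beta} converts the $u$-dependent pointwise estimates into $d$-dependent ones:
$$|\nabla_x u(p,0)|\leq C\,d(p)^{\beta-1},\qquad |\nabla_{xx} u(p,0)|\leq C\,d(p)^{\beta-2}.$$

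Suppose first $\beta<1$. Given $x_1,x_2$ in a $\delta$-neighborhood of the free boundary, assume without loss of generality $d:=d(x_1)\leq d(x_2)$ (if either point lies in the contact set take its distance to be $0$ and the corresponding value of $u$ or $\nabla_x u$ is zero). If $|x_1-x_2|\geq d/2$, pick contact points $z_i$ nearest to $x_i$; Theorem \ref{thm:opt-reg} gives $u(x_i,0)\leq C|x_i-z_i|^\beta$, and since $|x_1-z_1|\leq 2|x_1-x_2|$ and $|x_2-z_2|\leq |x_2-z_1|\leq 3|x_1-x_2|$ the triangle inequality yields $|u(x_1,0)-u(x_2,0)|\leq K|x_1-x_2|^\beta$. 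If instead $|x_1-x_2|<d/2$, the segment $[x_1,x_2]$ stays in $\{u>0\}\cap\Gamma$ with every point on it at distance comparable to $d$ from the contact set, so integrating the gradient bound along the segment gives
$$|u(x_1,0)-u(x_2,0)|\leq C\,d^{\beta-1}|x_1-x_2|.$$
Since $\beta-1<0$ and $|x_1-x_2|<d/2$, one has $d^{\beta-1}\leq 2^{1-\beta}|x_1-x_2|^{\beta-1}$, and the right side is bounded by $K|x_1-x_2|^\beta$ as desired.

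The case $\beta\geq 1$ is handled identically with $\nabla_x u$ replacing $u$ and the Hessian estimate replacing the gradient estimate. In the far regime $|x_1-x_2|\geq d/2$, the pointwise bound $|\nabla_x u(x_i,0)|\leq C\,d(x_i)^{\beta-1}$ together with $d(x_i)\leq 3|x_1-x_2|$ (and $\beta-1\geq 0$) bounds each term by $C'|x_1-x_2|^{\beta-1}$ separately. In the close regime, integrating the Hessian bound along the segment gives $|\nabla_x u(x_1,0)-\nabla_x u(x_2,0)|\leq C\,d^{\beta-2}|x_1-x_2|$, and since $\beta<2$ the same interpolation (now in the opposite direction, with exponent $2-\beta>0$) delivers the target bound $K|x_1-x_2|^{\beta-1}$.

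I do not foresee a substantive obstacle beyond bookkeeping. The only subtlety is verifying that on the segment in the close regime the distance to the contact set really stays comparable to $d$, which is the triangle inequality, and that $K$ depends only on $\delta,n,\beta,\sigma,\gamma$, which follows from the corresponding dependences in Theorem \ref{thm:opt-reg} and Lemma \ref{lem:c-beta} since a $\delta$-neighborhood of the free boundary keeps us uniformly bounded away from $\partial B_1$ where boundary-data constants enter.
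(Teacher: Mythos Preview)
There is a genuine gap in the conversion step at the start of your argument. Lemma \ref{lem:c-beta} gives
\[
|\nabla_x u(p,0)|\le C\,u(p,0)^{\frac{\beta-1}{\beta}},\qquad
|\nabla_{xx} u(p,0)|\le C\,u(p,0)^{\frac{\beta-2}{\beta}}.
\]
When $\beta<1$ the exponent $\tfrac{\beta-1}{\beta}$ is negative, and since $\beta<2$ always holds here, the exponent $\tfrac{\beta-2}{\beta}$ is negative in the case $\beta\ge 1$ as well. Thus an \emph{upper} bound $u(p,0)\le C\,d(p)^\beta$ from Theorem \ref{thm:opt-reg} yields only a \emph{lower} bound on the right-hand sides, not the upper bounds $|\nabla_x u|\le C\,d^{\beta-1}$ and $|\nabla_{xx}u|\le C\,d^{\beta-2}$ that your close-regime integration needs. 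So the segment estimate $|u(x_1,0)-u(x_2,0)|\le C\,d^{\beta-1}|x_1-x_2|$ (and its Hessian analogue) is not justified.

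What is actually required along the segment is a \emph{lower} bound on $u$, and this is exactly how the paper proceeds: rather than splitting on $|x_1-x_2|$ versus $d(x_1)$, it splits on whether $u(x_1,0)\ge C_1(6|x_1-x_2|)^\beta$. In that case the variant boundary Harnack together with Lemma \ref{reg-lem} forces $u\ge C_2(6|x_1-x_2|)^\beta$ on the whole ball $B_{|x_1-x_2|}(x_1,0)$, and this lower bound is what makes $u^{(\beta-1)/\beta}$ (resp.\ $u^{(\beta-2)/\beta}$) controllable from above along the segment. Your far-regime arguments are fine, but to repair the close regime you must replace the appeal to Theorem \ref{thm:opt-reg} by this positivity-propagation input from Lemma \ref{reg-lem}.
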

In either case, since away from the $\delta$-neighborhood of the free boundary, $u\in C^\infty$, this means we can put the two together to get a uniform $C^\beta$ norm for $u$. 
\begin{proof}
As in the previous lemma, we notice that there is a constant $C_1$, such that if $u(x_1,0) \geq C_1$, then the variant boundary Harnack inequality tells us that $u$ satisfies the conditions for Lemma \ref{reg-lem}, and hence $u \geq C_2$ inside $B_\frac{1}{6}(x_1,0)$. Rescaling this statement, we have that if $u(x_1,0) \geq C_1 r^\beta$, then $u \geq C_2 r^\beta$ inside $B_\frac{r}{6}(x_1,0)$. 

We now consider three cases:
\begin{enumerate}
 \item $u(x_1,0) \geq C_1 (6 |x_1-x_2|)^\beta$ and $|x_1-x_2| < \frac{\delta}{4}$
 \item $u(x_1,0) \geq C_1 (6 |x_1-x_2|)^\beta$ and $|x_1-x_2| \geq \frac{\delta}{4}$
 \item $u(x_1,0), u(x_2,0) \leq C_1 (6 |x_1-x_2|)^\beta$
\end{enumerate}

\textit{1)} Consider the line segment joining $x_1$ and $x_2$, with $r = 6 |x_1-x_2|$. Since $u(x_1,0) \geq C_1 r^\beta$, we have $u(x,0) \geq C_2 r^\beta$ inside $B_\frac{r}{6}(x_1,0)$, which happily is precisely $B_{|x_1-x_2|}(x_1,0)$. Hence, when $\beta < 1$, the mean value theorem applied along this line segment tells us
\[ |u(x_1,0) - u(x_2,0)| \leq |\nabla_x u(x',0)| |x_1-x_2| \]
where $x'$ is some point along our line segment. By applying the estimates from Lemma \ref{lem:c-beta}, we have 
\[ |\nabla_x u(x',0)| |x_1-x_2| \leq C (u(x',0))^{\frac{\beta -1}{\beta}} |x_1-x_2| \leq C |x_1 - x_2|^\beta \]
When $\beta \geq 1$, we consider instead 
\[ |\nabla_x u(x_1,0) - \nabla_x u(x_2,0)| \leq |\nabla_{xx} u(x',0)| |x_1-x_2| \]
and by applying the estimates on the tangential second derivatives, we have
\[ |\nabla_{xx} u(x',0)||x_1-x_2| \leq C (u(x',0))^{\frac{\beta -2}{\beta}} |x_1-x_2|\leq C |x_1 - x_2|^{\beta-1} \]

\textit{2)} In this case, we simply say directly that, if $\beta < 1$, 
\begin{eqnarray*}
|u(x_1,0) - u(x_2,0)| &\leq& |u(x_1,0)| + |u(x_2,0)| \\
&\leq& C \delta^\beta \leq C |x_1-x_2|^\beta 
\end{eqnarray*}
where we invoke Theorem \ref{thm:opt-reg} on the last step. 
If $\beta \geq 1$, we say 
\begin{eqnarray*}
|\nabla_x u(x_1,0) - \nabla_x u(x_2,0)| &\leq& |\nabla_x u(x_1,0)| + |\nabla_x u(x_2,0)| \\
&\leq& C  (u(x_1,0))^{\frac{\beta -1}{\beta}} + C(u(x_2,0))^{\frac{\beta -1}{\beta}} \\
&\leq& C \delta^{\beta -1}
\end{eqnarray*}
where we invoke Theorem \ref{thm:opt-reg} on the last step. 

\textit{3)} The calculations are exactly like case 2, only instead of invoking Theorem \ref{thm:opt-reg} to bound $u$ pointwise, we invoke the hypothesis. 
\end{proof}

From this result, Corollary \ref{thm:c-beta} is obvious. 
\subsubsection{The estimates when $y>0$}

Note first that inside $y > 0$, $u \in C^\infty$, since $\nabla \cdot (y^a \nabla u) = 0$ is uniformly elliptic with smooth coefficients on any compact subset contained within $\{y > 0\}$ (with differing ellipticities, of course). We can thus assume that $u$ is smooth far away, and concentrate on its behavior for small values of $y$. 

We start with an elementary lemma that gives us pointwise estimates on the derivatives of $u$ via rescaling:
\begin{lem}\label{lem:point-gradient-g}
Let $u$ be a non-negative function satisfying $\nabla \cdot (y^a \nabla u) = 0$ inside $B_R \cap \{y>0\}$ for some large $R$ . Then there is a constant $C$ depending only on $n$ and $\sigma$, such that
\[ |\nabla u (x_0,y_0)| \leq \frac{C}{y_0} u(x_0,y_0) \]
and 
\[ |D^2 u (x_0,y_0)| \leq \frac{C}{y_0^2} u(x_0,y_0) \]
whenever $y_0 > 0$ and $B_{\frac{y_0}{2}}((x_0,y_0)) \subset B_R$. 
\end{lem}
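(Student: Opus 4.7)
The plan is to reduce to a standard interior estimate on the unit ball via rescaling. The key observation is that on the ball $B_{y_0/2}((x_0,y_0))$, the coordinate $y$ ranges over $[y_0/2,\, 3y_0/2]$, so the weight $y^a$ is pinched between two positive constants (depending only on $a$) times $y_0^a$. Consequently, although the operator $\nabla\cdot(y^a\nabla\cdot)$ is globally degenerate, it is \emph{uniformly} elliptic with smooth coefficients on this ball, with ellipticity constants independent of $y_0$ after pulling out the overall factor $y_0^a$.

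First, I would rescale: define $v(z) = u(x_0 + (y_0/2) z',\, y_0 + (y_0/2) z_{n+1})$ for $z = (z', z_{n+1}) \in B_1(0)$. A direct computation shows that $v$ satisfies the divergence-form equation
\[ \partial_i\bigl((1 + z_{n+1}/2)^a\, \partial_i v\bigr) = 0 \quad \text{in } B_1(0). \]
The coefficient $(1 + z_{n+1}/2)^a$ is smooth on $B_1$ and bounded above and below by positive constants depending only on $a$; in particular the equation is uniformly elliptic with smooth coefficients, and the ellipticity constants depend only on $\sigma$.

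Next, apply standard interior regularity for uniformly elliptic equations in divergence form (Schauder, or equivalently iterated De Giorgi--Nash--Moser) on $B_{1/2}$ to obtain
\[ |\nabla v(0)| \leq C \sup_{B_{1/2}} v, \qquad |D^2 v(0)| \leq C \sup_{B_{1/2}} v, \]
with $C = C(n,\sigma)$. Since $v \geq 0$ solves the same elliptic equation, the classical Harnack inequality (for which the weight plays no role here, since the equation on $B_1$ is already nondegenerate) yields $\sup_{B_{1/2}} v \leq C' v(0) = C' u(x_0, y_0)$.

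Finally, unwind the rescaling: $\nabla u(x_0,y_0) = (2/y_0)\,\nabla v(0)$ and $D^2 u(x_0,y_0) = (2/y_0)^2\, D^2 v(0)$, giving the stated bounds. There is no real obstacle here; the only subtlety is the initial observation that the $y_0$-dependence drops out of the ellipticity constants after rescaling, allowing the classical interior estimates to be applied in a universal form.
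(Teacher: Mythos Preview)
Your argument is correct and follows essentially the same approach as the paper: rescale so that the weight $y^a$ becomes a smooth, uniformly elliptic coefficient on the unit ball, apply standard interior estimates together with the Harnack inequality to get $|\nabla v(0)|,\,|D^2 v(0)| \le C\,v(0)$, and then undo the scaling. The paper phrases it as first proving the case $y_0=1$ and then rescaling, but this is the same reduction you carry out.
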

\begin{proof}
Suppose first that $y_0 = 1$ and $B_{\frac{1}{2}}(x_0,1)$ is inside $B_R$. Then inside this ball, $y^a$ is a bounded, $C^\infty$ coefficient, so the standard regularity theory for weak solutions gives us the estimates
\[ |\nabla u (x_0,1)| \leq C u(x_0,1) \]
and 
\[ |D^2 u(x_0,1)| \leq C u(x_0,1). \]
For general $y$, we simply consider the rescaling $w(x,y) = u( x_0 + (x-x_0) y_0, y_0 y)$ and write the estimate for $w$ in terms of $u$. 
\end{proof}

Next, we provide a boundary estimate on the growth of $u$ away from the line $y=0$. We choose nice constants for the varius radii and the lines, bearing in mind that we can rescale.  
\begin{lem}\label{lem:beta-growth-g}
Let $u$ be an energy minimizer inside $B_8$ with nontrivial free boundary. Then there exists a constant $C$ such that, for $(x,y) \in B_3$, we have  
\[ |u(x,y) - u(x,0)| \leq C y^\beta \]
\end{lem}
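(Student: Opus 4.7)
The plan is to split the question into boundary-trace regularity plus a correction that vanishes on the flat boundary. Fix a smooth cutoff $\phi$ with $\phi \equiv 1$ on $B_5$ and supported in $B_6$, set $g(z) := u(z,0)\phi(z)$, and define
\[
u_P(x, y) = C_{n, \sigma} \int_{\mathbb{R}^n} \frac{y^{2\sigma}\, g(z)}{\left((x-z)^2 + y^2\right)^{(n+2\sigma)/2}}\, dz,
\]
the Caffarelli--Silvestre Poisson extension of $g$. Then $u_P$ solves $\nabla\cdot(y^a \nabla u_P) = 0$ on $\mathbb{R}^{n+1}_+$, agrees with $g$ on $\Gamma$, and satisfies $\|u_P\|_\infty \leq \|g\|_\infty$. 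Note that $g$ has bounded $C^\beta(\mathbb{R}^n)$ norm by Corollary \ref{thm:c-beta} applied at a free-boundary point inside $B_8$, together with the interior smoothness of $u|_\Gamma$ on the positive set from Lemma \ref{lem:c-beta} and the smoothness of $\phi$.

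I would then estimate each of the two pieces separately. From
\[
u_P(x,y) - u_P(x,0) = C_{n,\sigma}\int \frac{y^{2\sigma} \left(g(z)-g(x)\right)}{\left((x-z)^2 + y^2\right)^{(n+2\sigma)/2}}\, dz,
\]
splitting the integral at $|z-x| = 1$, the local contribution is controlled, via $|g(z) - g(x)| \leq \|g\|_{C^\beta}|z-x|^\beta$ and the substitution $z = x + yw$, by $Cy^\beta\int|w|^\beta(1+|w|^2)^{-(n+2\sigma)/2}\, dw$, which converges precisely because $\beta < 2\sigma$; the far-field contribution $|z-x| > 1$ is crudely $O(y^{2\sigma})$. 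This gives $|u_P(x, y) - u_P(x, 0)| \leq C y^\beta$ for $(x, y) \in B_3$, $y < 1$.

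Next, setting $v := u - u_P$, the function $v$ solves the weighted equation in $B_8^+$, vanishes on $\Gamma \cap B_5$ (because $g(x) = u(x, 0)$ there), and is bounded there by $\|u\|_{L^\infty(B_8^+)} + \|g\|_\infty \leq C$, the first term via Theorem \ref{thm:opt-reg}. To produce $|v(x,y)| \leq C y^{2\sigma}$ on $B_3^+ \cap \{y<1\}$, I would let $\bar b$ solve the weighted PDE in $B_5^+$ with $\bar b = 0$ on $\Gamma \cap B_5$ and $\bar b \equiv \|v\|_\infty$ on $\partial B_5 \cap \{y > 0\}$. The maximum principle for $A_2$-weighted elliptic equations (\cite{F-K-S}), applied to both $v$ and $-v$, yields $|v| \leq \bar b$ in $B_5^+$. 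The Boundary Comparison Principle (ratio form of boundary Harnack, cf. \cite{F-K-J-2}) applied to the two positive solutions $\bar b$ and $y^{2\sigma}$ (both vanishing on $\Gamma \cap B_5$, the latter being an explicit nonnegative solution of the weighted PDE) then forces $\bar b / y^{2\sigma}$ to be bounded on $B_4^+$, yielding $\bar b \leq C y^{2\sigma}$ and hence $|v| \leq C y^{2\sigma}$ on $B_3^+ \cap \{y<1\}$.

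Combining these,
\[
|u(x,y) - u(x,0)| \leq |v(x,y)| + |u_P(x,y) - u_P(x,0)| \leq C y^{2\sigma} + C y^\beta \leq C y^\beta,
\]
using $\beta = 2\sigma/(2-\gamma) < 2\sigma$ and $y<1$. The main technical obstacle is the Hopf-type estimate $\bar b \leq C y^{2\sigma}$ for the auxiliary barrier near $\Gamma$: while $y^{2\sigma}$ is manifestly the natural explicit local profile, pinning $\bar b$ against it uniformly on $B_3^+ \cap \{y < 1\}$ requires either the ratio form of boundary Harnack in the $A_2$-weighted setting or a careful two-piece barrier construction. Everything else reduces to a routine Poisson-kernel calculation that makes essential use of the strict inequality $\beta < 2\sigma$.
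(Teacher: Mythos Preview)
Your argument is correct but takes a genuinely different route from the paper's.

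The paper exploits the specific nonlinearity: it passes to $v=u^{2-\gamma}$, observes that $\nabla\cdot(y^a\nabla v)\geq 0$ and that the Neumann condition becomes $\lim_{y\to 0}y^a\partial_y v = C_\gamma$ (a constant), and then compares $v$ in the slab $\{0<y<1\}$ with a solution sharing the same data on $y=0$ and $y=1$ to obtain $|v(x,y)-v(x,0)|\leq C y^{2\sigma}$. The identity $2\sigma/(2-\gamma)=\beta$ together with the elementary inequality $|a^{1/(2-\gamma)}-b^{1/(2-\gamma)}|\leq C|a-b|^{1/(2-\gamma)}$ then yields the stated bound. In particular, the paper's proof does not use Corollary~\ref{thm:c-beta}; it is self-contained once optimal growth (Theorem~\ref{thm:opt-reg}) is in hand.

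Your decomposition $u=u_P+v$ is instead the ``trace-plus-remainder'' strategy: you feed in the already-proved $C^\beta$ regularity of $u|_\Gamma$, handle $u_P$ by the standard Poisson-kernel H\"older estimate (where the convergence of $\int|w|^\beta(1+|w|^2)^{-(n+2\sigma)/2}\,dw$ uses exactly $\beta<2\sigma$), and control the zero-trace remainder by comparison with $y^{2\sigma}$ via the $A_2$-weighted boundary comparison principle. This is more modular---it would work verbatim for any problem whose minimizer has $C^\beta$ trace---and it makes the role of the exponent $\beta<2\sigma$ transparent. The price is that you import two nontrivial black boxes (Corollary~\ref{thm:c-beta} and the ratio-form boundary Harnack of \cite{F-K-J-2}), whereas the paper's power substitution is elementary once one sees it and uses only the maximum principle. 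One small point worth noting in your write-up: when $\beta\geq 1$ you should subtract the linear Taylor term $\nabla g(x)\cdot(z-x)$ in the Poisson estimate (it integrates to zero by symmetry) so that the $O(|z-x|^\beta)$ bound is legitimate.
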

\begin{proof}
If $u(x,0) = 0$ for any $(x,0) \in B_3$, then Theorem \ref{thm:opt-reg} suffices for that value of $x$. Thus we only need consider values of $x$ such that $u(x,0) > 0$. For these values of $x$, we split $u$ into two parts. 

Let $\phi(x)$ be a mollifier compactly supported on $B_6$ which is 1 on $B_4$. Then split $u$ into two parts. The first is the extension via the Poisson kernel of the values of $u$ along $\{y=0\}$, 
\[u_1(x,y) = P_y(x) * (u(x,0)\phi(x)) \]
with
\[ u_2(x,y) = u(x,y)-u_1(x,y). \]
$u_2$ satisfies that $u_2(x,0) = 0$ inside $B_4$, and $\nabla \cdot (y^a \nabla u_2) = 0$ whenever $y>0$. That there is a nontrivial free boundary and Theorem \ref{thm:opt-reg} provides an upper bound for $|u|$ inside $B_6$, and hence for $u_1$ as well. Thus we can apply the maximum principle to $u_2$, and conclude that there exists a constant $C$ such that
\[ |u_2(x,y)| \leq C y^{2\sigma}. \]

The argument for $u_1$ relies on properties of the Poisson kernel (this argument follows \cite{Stein}, prop 4.7). A bit of calculation tells us that 
\[ \int_{\mathbb{R}^n} \left|\frac{\partial P_y}{\partial y} (x,y) \right|  dx \leq \frac{C}{y} \]
whence we can write 
\[ \frac{\partial u_1}{\partial y} (x,y) = \int_{\mathbb{R}^n} \frac{\partial P_y}{\partial y}(z) \left(u(x-z,0) - u(x,0)\right) dz \]
Applying the $C^\beta$ estimate for $u(x,0)$, we find
\[ \left|\frac{\partial u_1}{\partial y} (x,y) \right| \leq C \|u(x,0)\|_{C^\beta} y^{\beta - 1}. \]

Hence we can conclude that 
\[ |u(x,y) - u(x,0)| \leq |u_1(x,y)-u_1(x,0)| + |u_2(x,y) - u_2(x,0)| \leq C y^\beta + C y^{2\sigma} \]
Since $\beta < 2\sigma$, we have the desired result. 

\end{proof}


With these two lemmata in hand, we can prove the analogue of Theorem \ref{thm:c-beta-k} for the domain where $y>0$. 
\begin{thm}\label{thm:global-beta-k}
Suppose $\beta < 1$. Then there exists a $K = K(\delta, n, \beta)$, such that if $X_1=(x_1,y_1),X_2=(x_2,y_2) \in \mathbb{R}^{n+1}_+$ are in a $\delta$-neighborhood of the free boundary, we have 
\[|u(X_1)-u(X_2)| \leq K |X_1-X_2|^\beta \]
If $\beta \geq 1$, there exists a $K = K(\delta, n, \beta,\alpha)$, such that if $X_1,X_2 \in \mathbb{R}^{n+1}_+$ are in a $\delta$-neighborhood of the free boundary, we have 
\[|u(X_1)-u(X_2)| \leq K |X_1-X_2|^{\alpha} \]
for any $\alpha < 1$. 
\end{thm}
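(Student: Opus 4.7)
The plan is to mimic the case analysis in the proof of Theorem \ref{thm:c-beta-k}, replacing the pointwise derivative estimates on $\Gamma$ with the combination of Lemma \ref{lem:beta-growth-g} (vertical growth from $\Gamma$) and Lemma \ref{lem:point-gradient-g} (interior gradient bound). Fix $X_1, X_2$ in a $\delta$-neighborhood of the free boundary; set $h = |X_1 - X_2|$ and assume without loss of generality that $y_1 \leq y_2$.

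First I would dichotomize based on the heights. In the \emph{boundary case} $y_2 \leq 2h$, both points lie within $O(h)$ of $\Gamma$, and I would project to obtain
\[
|u(X_1) - u(X_2)| \leq |u(X_1) - u(x_1,0)| + |u(x_1,0) - u(x_2,0)| + |u(x_2,0) - u(X_2)|.
\]
Lemma \ref{lem:beta-growth-g} controls the first and third terms by $C y_i^\beta \leq C(2h)^\beta$; Theorem \ref{thm:c-beta-k} controls the middle, giving $K|x_1 - x_2|^\beta \leq K h^\beta$ when $\beta < 1$, and a Lipschitz bound $K|x_1 - x_2| \leq K h$ when $\beta \geq 1$ (via the $C^{1,\beta-1}$ regularity on $\Gamma$). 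Adding yields the desired estimate immediately for $\beta < 1$; for $\beta \geq 1$, the interpolation $h \leq (2\delta)^{1-\alpha} h^\alpha$ (valid for $h \leq 2\delta$) converts the Lipschitz bound to a $C^\alpha$ bound.

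In the \emph{interior case} $y_2 > 2h$, I have $y_1 > h$ as well, so the segment $\overline{X_1 X_2}$ stays in $\{y > h\}$ and Lemma \ref{lem:point-gradient-g} applies at each point on it. Combining the pointwise bound $u(X) \leq K\, d(X, \mathrm{FB})^\beta$ from Theorem \ref{thm:opt-reg} with a rescaling at the natural scale $d_1 = d(X_1, \mathrm{FB})$, I would derive the gradient estimate $|\nabla u(X)| \leq C\, d_1^{\beta - 1}$ on the segment. The mean value theorem then gives $|u(X_1) - u(X_2)| \leq C\, d_1^{\beta - 1}\, h$. When $\beta < 1$, the inequalities $d_1 \geq y_1 > h$ and $\beta - 1 < 0$ together yield $d_1^{\beta - 1} \leq h^{\beta - 1}$, producing $|u(X_1) - u(X_2)| \leq C h^\beta$. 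When $\beta \geq 1$, the bound $d_1 \leq \delta$ gives a Lipschitz estimate $|u(X_1) - u(X_2)| \leq C \delta^{\beta - 1} h$, which interpolates to the required $C^\alpha$ bound using $h \leq 2\delta$.

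The hard part will be justifying the gradient bound $|\nabla u(X)| \leq C d_1^{\beta - 1}$ in the sub-regime of the interior case where $X_1$ is horizontally far from the free boundary but vertically close to $\Gamma$, i.e., $y_1 \ll d_1$. A naive application of Lemma \ref{lem:point-gradient-g} at radius $y_1/2$ gives only $|\nabla u| \leq C K d_1^\beta / y_1$, which is not sharp. The fix will be to rescale at scale $d_1$ rather than $y_1$, using that the portion of $\Gamma$ at distance $\geq d_1/2$ from $\mathrm{FB}$ lies in $\{u > 0\}$ with $u$ bounded away from $0$ by non-degeneracy; Lemma \ref{lem:c-beta} then supplies tangential derivative bounds on this portion, and the boundary condition $\lim_{y \to 0} y^a \partial_y u = \gamma u^{\gamma - 1}$, with bounded right-hand side, controls the normal derivative. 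Standard boundary regularity for degenerate elliptic equations with $A_2$ weights then delivers the $d_1$-scale Lipschitz control needed to close the argument.
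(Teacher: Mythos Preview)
Your scheme---split by a height threshold, project to $\Gamma$ and invoke Lemma~\ref{lem:beta-growth-g} plus Theorem~\ref{thm:c-beta-k} in the near-boundary case, and use the pointwise gradient bound of Lemma~\ref{lem:point-gradient-g} along the segment in the interior case---is exactly the architecture of the paper's proof. The substantive difference is the choice of threshold: you cut at $y_2\le 2h$ (with $h=|X_1-X_2|$), whereas the paper cuts at $y_2\le h^{1-\beta}$. With the paper's larger threshold the interior case becomes elementary: one uses only the crude bound $u(\tilde X)\le C$ (from optimal regularity in the $\delta$-neighborhood), so Lemma~\ref{lem:point-gradient-g} gives $|\nabla u(\tilde X)|\le C/\tilde y\le C/y_1\le C\,h^{\beta-1}$ directly, and the ``hard part'' you isolate never arises. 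In exchange, the paper's boundary case yields only $y_i^\beta\le h^{\beta(1-\beta)}$, which is weaker than the $h^\beta$ the theorem claims (the displayed exponent $\beta/(1-\beta)$ in the paper appears to be a slip). Your threshold $2h$ trades this: the boundary case is clean, but the interior case then genuinely needs the sharper bound $|\nabla u|\lesssim d(\cdot,\mathrm{FB})^{\beta-1}$ rather than just $C/y$.

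Your sketch of the hard part is the right idea---rescale at scale $d_1$, use Lemma~\ref{lem:c-beta} for tangential control and the Euler--Lagrange boundary condition for the conormal derivative---but the final appeal to ``standard boundary regularity for degenerate elliptic equations with $A_2$ weights'' is doing real work and is not off-the-shelf in this weighted Neumann setting; you would need to spell out a Schauder-type estimate up to $\{y=0\}$ for $\nabla\!\cdot(y^a\nabla v)=0$ with bounded $\lim_{y\to0}y^a\partial_y v$, e.g.\ along the lines of the estimates cited from \cite{C-S-S} and \cite{S-1} in the proof of Lemma~\ref{lem:c-beta}. Once that is in place your argument closes and in fact delivers the full $C^\beta$ bound without the exponent loss present in the paper's computation.
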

\begin{proof}
Without loss of generality, assume that $y_1 \leq y_2$. 

First, assume that $\beta < 1$. Suppose that $y_2\leq |X_1-X_2|^{1-\beta}$ . Then, using the $C^\beta$-regularity of $u$ restricted to $y=0$ and the previous lemma, we write that 
\begin{eqnarray*}
|u(x_1,y_1) - u(x_2,y_2)| &\leq& |u(x_1,y_1)-u(x_1,0)| + |u(x_2,y_2) - u(x_2,0)| + |u(x_1,0)-u(x_2,0)| \\
&\leq& C (y_1^\beta + y_2^\beta) + C |x_1-x_2|^\beta \\
&\leq& 2 C |X_1-X_2|^{\frac{\beta}{1-\beta}} + C |X_1 - X_2|^\beta \\
&\leq& C |X_1-X_2|^\beta
\end{eqnarray*}

On the other hand, if  $y_1 \geq |X_1-X_2|^{1-\beta}$, then we use our pointwise gradient estimates and the special properties of this case to write that
\begin{eqnarray*}
 |u(X_1) - u(X_2)| &\leq& |\nabla u(\tilde{X})| |X_1-X_2| \\
&\leq& \frac{u(\tilde{X})}{y_1} |X_1-X_2| \\
&\leq& \frac{C}{|X_1-X_2|^{1-\beta}} |X_1-X_2| \leq C |X_1-X_2|^\beta
\end{eqnarray*}
where $\tilde{X}$ is some point on the line joining $X_1$ and $X_2$. 

If $y_1 \leq |X_1-X_2|^{1-\beta}$ and $y_2 \geq |X_1-X_2|^{1-\beta}$, then we consider: 
\[ |u(X_1)-u(X_2)| \leq |u(x_1,y_1) - u(x_2,|X_1-X_2|^{1-\beta})| + |u(x_2,|X_1-X_2|^{1-\beta}) - u(x_2,y_2)| \]
The first term is controlled by the first method above, and the second term is controlled by the second method.

For the case when $\beta \geq 1$, simply let replace $|X_1-X_2|^{1-\beta}$ in the preceding argument by $|X_1-X_2|^{1-\alpha}$. 
\end{proof}
\section{Non-degeneracy}
Our goal in this section is to prove that energy minimizers of
\[ J(u) = \frac{1}{2} \int_{B_+} y^a |\nabla u|^2 dx dy + \int_\Gamma u^\gamma dx \]
possess the property they are non-degenerate, which is to say that near the free boundary, they grow away from 0, and do not stay small. To be precise, our final theorem is
\begin{thm}\label{thm:non-degeneracy}
Let 0 be a point of the free boundary of $u$, a minimizer of $J(u)$. Then there exists a constant $C > 0$ such  
\[ \sup_{B_r} u \geq C r^\beta \]
\end{thm}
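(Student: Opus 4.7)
The plan is proof by contradiction via scaling reduction together with an Alt--Phillips style energy comparison. The scaling $u_r(X) = r^{-\beta} u(rX)$ from Section 2 sends minimizers to minimizers and keeps $0$ a free boundary point, so it suffices to prove that $\sup_{B_1^+} u \geq c_*$ for a dimensional $c_* > 0$ whenever $0$ is a free boundary point of $u$ on $B_1^+$.

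Assume for contradiction that $\sup_{B_1^+} u \leq \delta$ for a small $\delta$ to be chosen. Pick a smooth radial cutoff $\eta$ with $\eta \equiv 1$ on $B_{1/2}$, $\eta \equiv 0$ outside $B_{3/4}$, $0 \leq \eta \leq 1$, and $|\nabla \eta|, |\Delta \eta| \leq C$; set the competitor $v = u(1-\eta)$. Then $v=u$ on $\partial B_1^+ \cap \{y>0\}$, $v \equiv 0$ on $B_{1/2}^+$, and $0 \leq v \leq u$. Expanding $|\nabla v|^2 = (1-\eta)^2|\nabla u|^2 - 2(1-\eta)u\, \nabla u \cdot \nabla \eta + u^2 |\nabla \eta|^2$ and integrating the cross term by parts against the weight $y^a$---the boundary term on the spherical part vanishes since $\eta \equiv 0$ there, and the boundary term on $\Gamma$ vanishes since $y^a \partial_y \eta \to 0$ using $a+1 = 2(1-\sigma) > 0$---yields
$$\tfrac{1}{2}\int_{B_1^+} y^a \bigl(|\nabla u|^2 - |\nabla v|^2\bigr)\, dX \;\geq\; \tfrac{1}{2}\int_{B_{1/2}^+} y^a |\nabla u|^2\, dX \;-\; C\delta^2,$$
where the $\delta^2$-error bounds all $u^2$-type residues via $u \leq \delta$. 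On the penalty side, $v = 0$ on $B_{1/2} \cap \Gamma$ and $v \leq u$ on all of $\Gamma$, so $\int_\Gamma (u^\gamma - v^\gamma)\, dx \geq \int_{B_{1/2}\cap\Gamma} u^\gamma\, dx$. Minimality $J(u) \leq J(v)$ then gives
$$\int_{B_{1/2}^+} y^a |\nabla u|^2\, dX \;+\; 2\int_{B_{1/2}\cap\Gamma} u^\gamma\, dx \;\leq\; C\delta^2.$$

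The hard part will be converting this quantitative integral smallness into the topological conclusion that $0$ lies in the interior of $\{u=0\}$, contradicting that $0$ is a free boundary point. I would close by iterating the same comparison on the rescaled minimizers $u_{2^{-k}}$ at every dyadic scale: one shows that $\sup_{B_1^+} u_{2^{-k}} \leq \delta$ propagates under further rescaling and, in fact, the normalized sup decays geometrically in $k$, forcing the rescaled energies to zero. A subsequential blow-up limit would then be a minimizer of the limit energy that vanishes identically on $B_{1/2}^+$, yet $0$ must remain the accumulation point of $\{u_{2^{-k}} > 0\}$ at every scale---a contradiction. An equivalent finish is to combine the energy bound with Lemma \ref{reg-lem} on a rescaled $u_{2^{-k_0}}$: at a sufficiently fine scale either the hypothesis of Lemma \ref{reg-lem} is met (forcing $u$ positive throughout $B_{2^{-k_0}/3}^+$) or it fails so strongly that the displayed energy estimate drives $u$ to vanish on a full neighborhood of $0$ in $\Gamma$, both incompatible with $0$ being a free boundary point. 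The delicate technical point I expect is the stability of minimality under the rescaling limit, which requires lower semicontinuity of $J$ in $H^1(B_+, y^a)$ together with continuity of the trace penalty.
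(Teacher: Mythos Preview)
Your competitor argument and the resulting energy estimate
\[
\int_{B_{1/2}^+} y^a |\nabla u|^2\, dX \;+\; 2\int_{B_{1/2}\cap\Gamma} u^\gamma\, dx \;\leq\; C\delta^2
\]
are correct and constitute a genuinely different opening from the paper's. The paper never compares energies here; instead it first proves Theorem~\ref{thm:growth} by testing the Euler--Lagrange identity against a fixed $\phi$ and exploiting that $u^{\gamma-1}$ blows up when $u$ is small, then runs a compactness argument (Step~1) followed by an inductive chain of points with geometrically growing values (Step~2), in the style of \cite{Caffarelli-Salsa}.

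The gap in your argument is the closure. You assert that ``$\sup_{B_1^+} u_{2^{-k}} \leq \delta$ propagates under further rescaling and, in fact, the normalized sup decays geometrically in $k$,'' but neither claim is established. From $\sup_{B_1^+} u \leq \delta$ you only get $\sup_{B_1^+} u_{2^{-k}} = 2^{k\beta}\sup_{B_{2^{-k}}^+} u \leq 2^{k\beta}\delta$, which \emph{grows}; to rerun your competitor at the next scale you would need $\sup_{B_{1/2}^+} u \leq 2^{-\beta}\delta$, and nothing in your energy estimate delivers pointwise improvement of the supremum. (Indeed, the even reflection of $u$ is a \emph{supersolution} of $\nabla\cdot(|y|^a\nabla\cdot)=0$, so you have a minimum principle, not a maximum principle, and no De~Giorgi--type oscillation decay is available on the sup side.) Your blow-up alternative has the same defect: without the sup bound at every scale you cannot extract a limit that vanishes on $B_{1/2}^+$, and the claim that $0$ ``must remain the accumulation point of $\{u_{2^{-k}}>0\}$'' presupposes exactly the lower density of the positivity set that nondegeneracy is meant to establish. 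Finally, the proposed dichotomy with Lemma~\ref{reg-lem} is not a dichotomy: failure of the hypothesis of Lemma~\ref{reg-lem} only says $u$ is not uniformly large on the upper cap, which is already implied by $u\leq\delta$ and yields nothing new.

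What your estimate \emph{does} give, if you feed it back into the Euler--Lagrange relation, is that $|\{u>0\}\cap B_{1/4}\cap\Gamma|\leq C\delta^{2-\gamma}$; but small measure of the positivity set does not force $0$ into the interior of $\{u=0\}$. To close, you need some mechanism that converts smallness of $u$ into strict positivity of the zero set in a full neighborhood---this is precisely the role of Theorem~\ref{thm:growth} in the paper, and it hinges on the singularity of $u^{\gamma-1}$ rather than on energy comparison.
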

Our strategy for proving this theorem is first to show that at a fixed distance away from the free boundary, there is a point which attains the desired growth. 

\begin{thm}\label{thm:growth}
Let $x_0 \in \Gamma$ be a point such that $d(x_0,F(u)) = r$, where $F(u)$ is the free boundary. Then, there exists a universal constant $\tau(n, \sigma, \gamma) >0$ such that 
\[ u(x_1) \geq \tau r^\beta \]
where $|x_0-x_1|\leq \frac{r}{4}$, and $x_1 \in \Gamma$. 
\end{thm}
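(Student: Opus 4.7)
The plan is to argue by contradiction, using a linear barrier that exploits the blow-up of the Neumann datum $\gamma u^{\gamma-1}$ as $u\to 0$. First, I would use the scaling of \S\ref{scaling-section} to normalize $r=1$, so that $x_0\in\Gamma$ with $d(x_0,F(u))=1$ and $u>0$ throughout $B_1(x_0)\cap\Gamma$. On the half-ball $\Omega:=B_{1/4}^+(x_0,0)$, $u$ then solves $\nabla\cdot(y^a\nabla u)=0$ in the interior with $\lim_{y\to 0} y^a\partial_y u = \gamma u^{\gamma-1}$ on $\Omega\cap\Gamma$. Suppose for contradiction that $u(X)<\tau$ for every $X\in\overline\Omega$, where $\tau>0$ is to be pinned down. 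Since $\gamma-1<0$, the assumption $u\le\tau$ forces $\lim_{y\to 0} y^a\partial_y u\ge \gamma\tau^{\gamma-1}$ on all of $\Omega\cap\Gamma$.

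Next, I would introduce the linear barrier $V$ as the solution of the mixed BVP on $\Omega$ with $V=u$ on the spherical part of $\partial\Omega$ and $\lim_{y\to 0} y^a\partial_y V=\gamma\tau^{\gamma-1}$ on $\Omega\cap\Gamma$, and compare. Setting $\phi:=u-V$ produces a solution of the homogeneous equation vanishing on the spherical boundary with $\lim_{y\to 0} y^a\partial_y\phi\ge 0$ on $\Omega\cap\Gamma$; testing against $\phi_+$ and integrating by parts yields
\[
\int_\Omega y^a|\nabla\phi_+|^2\,dX = -\int_{\Omega\cap\Gamma}\phi_+\lim_{y\to 0} y^a\partial_y\phi\,dx\le 0,
\]
which forces $\phi_+\equiv 0$ and hence $u\le V$ in $\Omega$. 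To convert this into a quantitative estimate I would decompose $V=V_1+V_2$, where $V_1$ carries the Dirichlet datum $u$ (with zero Neumann flux) and $V_2$ carries the constant Neumann datum $\gamma\tau^{\gamma-1}$ (with zero Dirichlet data). The mixed-BC maximum principle, applied component-wise, gives $0\le V_1\le \tau$ and $V_2\le 0$ throughout $\Omega$. By linearity in the Neumann datum, $V_2=\gamma\tau^{\gamma-1}W$ where $W$ depends only on $n$ and $\sigma$; a Hopf-type argument for $\nabla\cdot(y^a\nabla\cdot)$ rules out $W(x_0,0)=0$ and gives $W(x_0,0)=:-K$ with $K=K(n,\sigma)>0$ universal.

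Evaluating the comparison at $(x_0,0)$ now yields
\[
0 \le u(x_0,0) \le V_1(x_0,0) + V_2(x_0,0) \le \tau - K\gamma\tau^{\gamma-1},
\]
which forces $\tau^{2-\gamma}\ge K\gamma$; so choosing $\tau<(K\gamma)^{1/(2-\gamma)}$ at the outset contradicts the supposition and yields $\sup_{\overline\Omega}u\ge(K\gamma)^{1/(2-\gamma)}$. Any near-maximizer $x_1$ then satisfies the conclusion after undoing the scaling. The main obstacle is the mixed-BC maximum principle itself: it must hold in precisely the direction used (so that the oversized Neumann datum of $u$ produces $u\le V$, not $u\ge V$), and it must deliver \emph{strict} negativity of $W(x_0,0)$ with a universal lower bound rather than merely $W(x_0,0)\le 0$. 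Both ingredients follow from the weak formulation above, together with the $A_2$-weighted Hopf lemma for degenerate elliptic operators cited earlier.
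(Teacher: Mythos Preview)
Your argument is correct and takes a genuinely different route from the paper. The paper's proof uses Green's identity directly: with a fixed smooth test function $\phi\ge 0$ compactly supported in the half-ball, integration by parts gives
\[
\int_\Gamma \gamma u^{\gamma-1}\phi\,dx \;=\; \int_\Gamma u\,\lim_{y\to 0} y^a\partial_y\phi\,dx \;+\; \int u\,\nabla\cdot(y^a\nabla\phi)\,dX,
\]
and if $u\le\epsilon$ on the support of $\phi$ the left side is $\gtrsim \epsilon^{\gamma-1}$ while the right side is $\lesssim \epsilon$ (plus a bounded term from optimal regularity), which is impossible for small $\epsilon$. Your proof instead constructs a linear barrier $V=V_1+V_2$ solving a mixed boundary value problem with constant Neumann flux $\gamma\tau^{\gamma-1}$, proves $u\le V$ by the weak comparison you wrote out, and reads off a pointwise inequality at the center. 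Both arguments hinge on the same mechanism, namely that the Neumann datum $\gamma u^{\gamma-1}$ diverges as $u\to 0$; the difference is whether this is exploited integrally or pointwise.

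What each buys: the paper's approach is lighter on infrastructure, needing only a single smooth $\phi$ and no auxiliary boundary value problem. Your barrier approach is more quantitative (it produces the explicit threshold $\tau=(K\gamma)^{1/(2-\gamma)}$) and localizes the contradiction at $(x_0,0)$ itself, but it requires solving the mixed BVP for $V_1,V_2$ and verifying the strict inequality $W(x_0,0)<0$. One clean way to see the latter, slightly more robust than invoking Hopf: subtract the explicit particular solution $W_0(x,y)=y^{2\sigma}/(2\sigma)$, so that $W-W_0$ has zero Neumann flux on $\Gamma$, reflects evenly across $\{y=0\}$, and takes nonpositive (and not identically zero) data on the spherical boundary; then the interior Harnack inequality for the $A_2$-weighted equation gives $W(x_0,0)=(W-W_0)(x_0,0)<0$ with a universal constant.
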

\begin{proof}
As is typical, we shall rely on the scaling property of energy minimizers, specifically, that on $\lambda B_+$, $\frac{1}{\lambda^\beta} u(x_0 + \lambda X)$ is still an energy minimizer. Hence, we can assume $d(x_0, F(u)) = 1$ and we only need to show that there exists an $x_1$ with 
\[ u(x_1) \geq c_1 \]
for some $x_1$ with $|x_0-x_1|\leq \frac{1}{4}$, $x_1 \in \Gamma$. 

The standard Green's identity applied to some test function $\phi \geq 0$ with support compactly contained within $B_1(x_0)$ tells us that 
\[ \int_\Gamma u (\lim_{y\rightarrow 0} y^a \partial_y \phi) - \phi (\lim_{y\rightarrow 0} y^a \partial_y u) dx = - \int_{B_\frac{1}{2}(u_0) \cap \{y>0\}} u \nabla \cdot (y^a \nabla \phi) dx dy \]
We notice that, along $\Gamma \cap B_{\frac{1}{2}}(x_0)$, we have $u>0$, and hence 
\[ \lim_{y\rightarrow 0} y^a \partial_y u = \gamma u^{\gamma -1} \]

Thus, we attain the condition that 
\[ \left| \int_\Gamma \gamma u^{\gamma -1} \phi dx \right| \leq \left| \int_\Gamma u \lim_{y\rightarrow 0} y^a \partial_y \phi dx \right| + \left| \int u(x) \nabla \cdot (y^a \nabla \phi) dx dy \right|  \]

Now let us suppose to the contrary that there is no constant $c_1$, that is to say, for any $\epsilon > 0$ there is a minimizer such that $|u| \leq \epsilon$ inside $B_\frac{1}{2}(x_0) \cap \Gamma$. Since $d(x_0,F(u)) = 1$, we have $d(B_{\frac{1}{2}}(x_0), F(u)) \leq \frac{3}{2}$, and we apply optimal regularity to bound the interior term: on $B_\frac{1}{2}(x_0) \cap \{y>0\}$, we have
\[ u(x) \leq C \]

Putting these conditions together, and we get the argument that
\[ \left| \int_\Gamma \gamma \epsilon^{\gamma -1} \phi dx \right| \leq \left| \int_\Gamma \epsilon \lim_{y\rightarrow 0} y^a \partial_y \phi dx \right| + \left| \int C \nabla \cdot (y^a \nabla \phi) dx dy \right|  \]
for arbitrarily small $\epsilon$. Since the left hand side becomes very large and the right hand side is bounded, we have a contradiction: $u$ cannot be made uniformly arbitrarily small inside $B_\frac{1}{2}(x_0) \cap \Gamma$, and thus there exists a constant $\tau$ such that $u > \tau$ at some point on $B_\frac{1}{2}(x_0) \cap \Gamma$, which we call $x_1$. 
\end{proof}

Now we begin the proof of Theorem \ref{thm:non-degeneracy}, which is essentially identical to that given in \cite{C-R-S}, and reproduced here for completeness:
\begin{proof}[Proof of Theorem \ref{thm:non-degeneracy}]
The proof is divided into two steps. 

\noindent
\textit{Step 1}. Let $u$ be a local minimizer in $B_M$ such that
\begin{itemize}
 \item 0 is a free boundary point,
 \item $B_1(e_1,0) \cap \Gamma \subset \{u> 0\} \cap \Gamma$,
 \item $u(e_1,0) = \tau > 0$ where $\tau$ is the constant from Theorem \ref{thm:growth}, known to be bounded both from above and from below away from 0. 
\end{itemize}
We claim the existence of $\lambda > 0$ and $M>0$ universal, the latter being large, such that 
\[ \sup_{B_M \cap \Gamma} u \geq (1+ \lambda) \tau \]
Suppose not. This implies the existence of a sequence of energy minimizers for our problem, $(u_k)_{k\in \mathbb{N}}$, satisfying the three listed conditions, such that 
\[ \lim_{k\rightarrow \infty} \sup_{B_M \cap \Gamma} u = \tau \]
From our regularity theorems, the family $(u_k)_k$ is equicontinuous, and may be assumed to converge uniformly on every compact subset of $\mathbb{R}^{n+1}_+$ to a function $u_\infty$ which satisfies $\lim_{y\rightarrow 0} y^a \partial_y u_\infty \geq 0$. Moreover, $u_\infty(\cdot,0)$ has a maximum at $e_1$, thus it is constant from the maximum principle. Hence $u_\infty \equiv \tau$, a contradiction because 0 is a free boundary point. 

\noindent
\textit{Step 2}. Assume that 0 is a free boundary point. As in \cite{Caffarelli-Salsa}, we construct inductively a sequence of points $(x_m)_m \in \mathbb{R}^n$, such that 
\begin{itemize}
 \item $u(x_{m+1},0) \geq (1+\lambda) u(x_m,0)$
 \item If $r_m = d(x_m, \{u=0\})$ and $\tilde{x_m}$ is a free boundary point realizing the distance, we have $x_{m+1} \in B_{Mr_m}(\tilde{x_m})$ with $u(x_{m+1},0) \geq \tau r_m^\beta$. This is from the construction of Step 1 applied to the rescaling $\frac{1}{r^\beta_m} u(\tilde{x_m} + r_m x, r_m y)$.
\end{itemize}
In particular, we have 
\[ |x_{m+1} - x_m| \leq 2(M+1)r_m \]
We end the induction at the first point $x_m$ which leaves $B_1$. This is possible, since the sequence $u(x_m,0)$ grows geometrically in $m$, but is controlled by optimal regularity considerations. Let $m_0$ be the index of the first point to leave $B_1$. Then we write
\begin{eqnarray*}
u(x_{m_0+1},0) &=& \sum_{m=0}^{m_0} (u(x_{m+1},0) - u(x_m,0)) \geq \lambda \sum u(x_m,0) \\
&\geq& C \lambda \sum d(x_m, \{u=0\} \cap B_1)^\beta \text{\,\,by Theorem \ref{thm:growth}} \\
&\geq& C' \sum |x_{m+1}-x_m|^\beta \\
&\geq& C'' \lambda \sum |x_{m+1}-x_m| \text{\,\, because $|x_{m+1}-x_m| \leq 1$} \\
&\geq& C''' 
\end{eqnarray*}
The last step is justified because $C'', \lambda$ are both universal, and $m_0$ is bounded universally by the geometric growth of the construction. Hence, for all $r > 0$, we have 
\[ \sup_{B_{Mr}} u \geq C''' r^\beta \]
which by rescaling $Mr$ to $r$ was precisely what we set out to prove. 
\end{proof}

\begin{cor}
In terms of $n$-dimensional Lebesgue measure, the positivity set $\{u >0\}$ has positive density, bounded away from 0, in a neighborhood of any free boundary point. That is to say, 
\[ \frac{|B_r \cap \{u>0\}|}{|B_r|} \geq \delta(n,\sigma,\gamma) > 0 \]
for any ball $B_r$ centered about a free boundary point. 
\end{cor}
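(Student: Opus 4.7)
The plan is to combine the non-degeneracy of Theorem \ref{thm:non-degeneracy} with the optimal regularity of Theorem \ref{thm:opt-reg}. Roughly, non-degeneracy forces $u$ to be of size at least $cr^\beta$ somewhere in $B_r$, while optimal regularity forbids $u$ from reaching this size too close to the contact set; together these squeeze open a definite-size ball on which $u>0$.

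In more detail, let $0$ be the free boundary point at the center of $B_r$. Applying Theorem \ref{thm:non-degeneracy} to $B_{r/2}$ yields a point $x^\ast \in B_{r/2}\cap\Gamma$ with
\[ u(x^\ast,0) \;\geq\; C\,(r/2)^\beta \;=\; c_1\, r^\beta, \]
where $c_1=c_1(n,\sigma,\gamma)>0$. Let $y^\ast$ be a nearest point in $\{u=0\}\cap\Gamma$ to $x^\ast$; such a point exists since $0$ itself lies in the contact set, and it is automatically a free boundary point because the segment $[x^\ast,y^\ast]$ leaves the positivity set for the first time at $y^\ast$. Applying Theorem \ref{thm:opt-reg} at $y^\ast$ gives
\[ c_1\, r^\beta \;\leq\; u(x^\ast,0) \;=\; |u(x^\ast,0)-u(y^\ast,0)| \;\leq\; K\, |x^\ast-y^\ast|^\beta, \]
and hence $|x^\ast-y^\ast|\geq\rho\, r$, where $\rho=(c_1/K)^{1/\beta}>0$ depends only on $n,\sigma,\gamma$.

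Since $y^\ast$ is the nearest point in the contact set to $x^\ast$, every point within distance less than $|x^\ast-y^\ast|$ of $x^\ast$ lies in $\{u>0\}\cap\Gamma$, so $B_{\rho r}(x^\ast)\cap\Gamma\subset\{u>0\}$. Set $\rho_0 := \min(\rho,1/2)$. Because $x^\ast\in B_{r/2}$, the triangle inequality gives $B_{\rho_0 r}(x^\ast)\subset B_r$, so
\[ |B_r\cap\{u>0\}| \;\geq\; |B_{\rho_0 r}(x^\ast)| \;=\; \rho_0^{\,n}\,|B_r|, \]
which is the claimed density bound with $\delta(n,\sigma,\gamma)=\rho_0^{\,n}$. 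There is no substantive obstacle here: the only care needed is the elementary bookkeeping above to ensure the ball centered at $x^\ast$ stays inside $B_r$, and the observation that the $y^\ast$ realizing the distance from $x^\ast$ to the contact set is eligible for the optimal regularity estimate.
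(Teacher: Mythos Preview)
Your argument is correct and is exactly the approach the paper sketches in its one-line proof (``a consequence of nondegeneracy\ldots and of the H\"older continuity of $u$''); you have simply supplied the routine details of finding $x^\ast$, bounding its distance to the contact set via the growth estimate, and fitting the resulting ball inside $B_r$. The only cosmetic point is that Theorem~\ref{thm:non-degeneracy} as stated gives $\sup_{B_r} u \ge C r^\beta$ without explicitly saying the large point lies on $\Gamma$, but the proof of that theorem constructs the sequence $(x_m)$ on $\Gamma$, so your choice of $x^\ast\in\Gamma$ is justified.
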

\begin{proof}
This is a consequence of nondegeneracy, which says that a sufficiently positive point exists, and of the Holder continuity of $u$ (Theorem \ref{thm:c-beta-k}). 
\end{proof}

\section{Acknowledgements}
The author would like to thank his advisor, Luis Caffarelli, for posing the problem and for his guidance. Thanks are also due to the referee, who suggested numerous improvements. The author was supported by the (NSF-funded) Research Training Group in Applied and Computational Mathematics at the University of Texas at Austin (NSF Award No. 0636586), and a NSF Mathematical Sciences Postdoctoral Fellowship (Award No. 1103786) during the preparation of this paper. 

\appendix

\section{Proof of Lemma \ref{lem:Riesz}}\label{ap:Riesz}
That the Riesz potential of a radial function is radial is obvious from symmetry considerations. 

The proof of this lemma uses facts from the theory of Riesz potentials. The key facts we will use are given by the following theorem of Adams \cite{Adams} (pp 772):
\begin{thm}\label{thm:Adams}
If $f \in L^p(\mathbb{R}^n)$, $1 \leq p < \infty$, then 
\begin{enumerate}
 \item $I_{\alpha_1} f \in$ BMO if and only if $M_{\alpha_1} f \in L^\infty(\mathbb{R}^n)$
 \item $I_{\alpha_1} f \in$ BMO implies $I_{\alpha_1 + \alpha_2} f \in C^{\alpha_2}$ where $0 < \alpha_2 < 1$
\end{enumerate}
where 
\[ (M_{\alpha_1} f)(x) = \sup_{r>0} r^{\alpha_1 - n} \int_{B_r(x)} |f(z)| dz \]
is the fractional Hardy-Littlewood maximal function. 
\end{thm}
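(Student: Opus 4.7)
\textbf{Part (1)} (equivalence of $I_{\alpha_1} f \in \mathrm{BMO}$ and $M_{\alpha_1} f \in L^\infty$). For the forward direction, I would fix $B = B_r(x_0)$, perform the standard splitting $f = f_1 + f_2$ with $f_1 = f \chi_{B_{2r}(x_0)}$, and for $x \in B$ use a dyadic annular decomposition of $B_{2r}(x_0)$ centered at $x$. The geometric sum of the annular contributions yields the pointwise bound $|I_{\alpha_1} f_1(x)| \lesssim M_{\alpha_1} f(x) \leq \|M_{\alpha_1} f\|_\infty$, which immediately controls the oscillation of $I_{\alpha_1} f_1$ on $B$. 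For the far piece $I_{\alpha_1} f_2$, the kernel $|x-z|^{\alpha_1 - n}$ is smooth on $B$, so a kernel-gradient estimate combined with a dyadic decomposition of the exterior of $B_{2r}(x_0)$ produces $|\nabla I_{\alpha_1} f_2(x)| \lesssim r^{-1} \|M_{\alpha_1} f\|_\infty$, and hence the oscillation on $B$ is again bounded. Summing gives $[I_{\alpha_1} f]_{\mathrm{BMO}} \lesssim \|M_{\alpha_1} f\|_\infty$. For the converse, by splitting $f$ into positive and negative parts one reduces to $f \geq 0$; using the inversion formula $(-\Delta)^{\alpha_1/2} I_{\alpha_1} f = c_{n,\alpha_1} f$ and testing against a bump function of scale $r$ supported near $B_r(x)$, the quantity $r^{\alpha_1 - n} \int_{B_r(x)} f$ becomes a pairing of $I_{\alpha_1} f$ with a suitable singular integral of that bump, which I would then recognize as a mean-oscillation of $I_{\alpha_1} f$ on an enlarged ball and hence bound by $[I_{\alpha_1} f]_{\mathrm{BMO}}$.

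\textbf{Part (2)}. I would invoke the composition identity $I_{\alpha_1 + \alpha_2} f = c \cdot I_{\alpha_2}(I_{\alpha_1} f)$ (valid for $\alpha_1 + \alpha_2 < n$, justified by Fourier analysis on Schwartz functions plus density) to reduce the assertion to showing that $I_{\alpha_2}$ maps $\mathrm{BMO}$ into $C^{\alpha_2}$. Writing $g = I_{\alpha_1} f \in \mathrm{BMO}$, $\rho = |x-y|$, $B = B_{2\rho}(x)$, and $K_{\alpha_2}(w) = |w|^{\alpha_2 - n}$, I would estimate the difference as
\[ I_{\alpha_2} g(x) - I_{\alpha_2} g(y) = \int_B (K_{\alpha_2}(x-z) - K_{\alpha_2}(y-z))(g(z) - g_B)\, dz + \int_{\mathbb{R}^n \setminus B} (\cdots)\, dz. \]
For the near piece, I bound each of the two $K_{\alpha_2}$ terms individually (the constant $g_B$ cancels in the subtraction), and John-Nirenberg on $B$ gives $\lesssim \rho^{\alpha_2} [g]_{\mathrm{BMO}}$. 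For the far piece, the mean value inequality gives $|K_{\alpha_2}(x-z) - K_{\alpha_2}(y-z)| \lesssim \rho |x-z|^{\alpha_2 - n - 1}$; decomposing $\mathbb{R}^n \setminus B$ into dyadic annuli $A_k$ and applying the standard BMO-doubling estimate $|g_{B_{2^k \rho}} - g_B| \lesssim k [g]_{\mathrm{BMO}}$ on each annulus, the resulting sum is geometric in $k$ (since $\alpha_2 - 1 < 0$) and yields the remaining $\lesssim \rho^{\alpha_2} [g]_{\mathrm{BMO}}$ contribution.

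The main obstacle throughout is that $I_{\alpha_2} g$ is not unconditionally defined for $g \in \mathrm{BMO}$: BMO functions may exhibit logarithmic growth at infinity while the Riesz kernel $K_{\alpha_2}$ has only borderline decay. I will handle this by setting up every estimate from the outset in terms of the difference $I_{\alpha_2} g(x) - I_{\alpha_2} g(y)$ and always subtracting the appropriate local ball-average of $g$ before estimating, which renders every integral that appears absolutely convergent and controlled purely by $[g]_{\mathrm{BMO}}$. A secondary technical point is the justification of the composition identity at the BMO level, which I would dispatch via Schwartz-density plus a careful tracking of the constant-of-integration ambiguity, noting that it is harmless once we only ever compute differences.
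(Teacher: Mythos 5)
The paper does not prove this theorem: it is quoted verbatim as a result of Adams and cited as \cite{Adams} (p.~772), then applied directly in the proof of Lemma \ref{lem:Riesz}. So there is no proof in the paper to compare against; what you have written is an attempt to reprove a result the paper takes as given. With that caveat, here is an assessment of your sketch on its own merits.

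The ``if'' half of (1) and your proof of (2) are sound. The near/far split $f = f\chi_{B_{2r}(x_0)} + f\chi_{B_{2r}(x_0)^c}$, the dyadic-annulus estimate giving $|I_{\alpha_1} f_1| \lesssim M_{\alpha_1} f$, and the kernel-gradient bound $|\nabla I_{\alpha_1} f_2| \lesssim r^{-1}\|M_{\alpha_1}f\|_\infty$ on $B_r(x_0)$ are all correct (the gradient integral is absolutely convergent because $M_{\alpha_1} f \in L^\infty$ controls $\int_{A_k}|f|$ by $(2^k r)^{n-\alpha_1}$, and the dyadic sum is geometric). Likewise, in (2) your plan to prove $I_{\alpha_2}: \mathrm{BMO} \to C^{\alpha_2}$ via subtraction of ball averages, dyadic BMO-doubling, and the mean value inequality on the kernel is the classical Peetre/Spanne argument, and handling the conditional convergence of $I_{\alpha_2}g$ for $g \in \mathrm{BMO}$ by working only with differences is exactly the right device. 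The composition identity modulo additive constants is defensible for $f\in L^p$, $\alpha_1+\alpha_2<n$.

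The ``only if'' half of (1) has a real gap. You write that ``by splitting $f$ into positive and negative parts one reduces to $f\geq 0$,'' but this reduction is invalid: knowing $I_{\alpha_1} f = I_{\alpha_1} f^+ - I_{\alpha_1} f^- \in \mathrm{BMO}$ does not allow you to conclude that $I_{\alpha_1}f^+$ and $I_{\alpha_1}f^-$ are individually in $\mathrm{BMO}$, so you cannot apply the nonnegative case to each piece. If you drop the reduction and run the inversion/bump-pairing argument directly on signed $f$, the duality with $\mathrm{BMO}$ controls $\sup_r r^{\alpha_1 - n}\bigl|\int_{B_r(x)} f\bigr|$, which is strictly weaker than the desired bound on $M_{\alpha_1}f(x) = \sup_r r^{\alpha_1 - n}\int_{B_r(x)}|f|$ because of possible cancellation. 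In Adams's original formulation the hypothesis $f\geq 0$ (more precisely, a nonnegative measure) is assumed for exactly this reason; the paper's statement quietly drops it, but nonnegativity is essential for that implication, not a convenience to be recovered by a decomposition. Since the paper only uses the ``if'' direction of (1) together with (2), and applies it to $\psi$ of constant sign, this does not affect the paper's argument; but as a proof of the theorem as stated, your proposal is incomplete at this step.
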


The plan is to set $\psi(x) = (1-3|x|)^{\beta-2\sigma} \chi_{B_{\frac{1}{3}}}(x)$, show that $\psi \in L^1(\mathbb{R}^n)$, and subsequently that $M_{\beta -2\sigma} \psi \in L^\infty$, whence we can apply the theorem to get the desired result. 

First, we prove that $\psi \in L^1$:
\begin{eqnarray*}
  \int \psi(x) dx &=& C_n \int_0^\frac{1}{3} (1-3r)^{\beta -2\sigma} r^{n-1} dr \\
&\leq& C_n \int_0^{\frac{1}{3}} (1-3r)^{\beta - 2\sigma} dr \leq C 
\end{eqnarray*}
since $\beta - 2\sigma > -\sigma > -1$. 

Next, we consider the fractional maximal function. It is clear that the points of concern lie directly atop the singularity, that is, $r = \frac{1}{3}$. In a balls $B_\rho$ about such a point, we see that 
\[ \int_{B\rho(\frac{1}{3})} \psi (x) dx \leq C \rho^{n+\beta - 2\sigma} \]
which is precisely the scaling needed to see that $M_{\beta-2\sigma} \psi \leq C$. 

Hence, we can apply the theorem of Adams and we conclude our lemma. 

\section{Some facts concerning degenerate elliptic equations with $A_2$ weights}\label{ap:degiorgi}
These facts are classical for the case of uniformly elliptic divergence form equations (see, e.g., \cite{Lin}), and close analogues are apparent by following the work of Fabes, Kenig, and Serapioni \cite{F-K-S}, although they are not explicitly stated there. We demonstrate the connection, and give precise statements, in the case of the equation
\[ \nabla \cdot (A \nabla u) = 0 \]
where $w(x) \lambda |\xi|^2 \leq \xi^T A \xi \leq w(x) \Lambda |\xi|^2$ and $w$ is some $A_2$ weight.  
\subsection{The De Giorgi oscillation lemma}
\begin{lem}
Suppose $u$ is a positive supersolution in $B_{2}$ with 
\[ |\{ x \in B_1; u \geq 1 \}| \geq \epsilon |B_1| \]
Then there exists a constant $C$ depending only on $\epsilon, n,$ and $\sigma$ such that 
\[ \inf_{B_\frac{1}{2}} u \geq C \]
\end{lem}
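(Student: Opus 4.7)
\medskip

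\noindent\textbf{Proof proposal.} The plan is to deduce this from the weak Harnack inequality of Fabes--Kenig--Serapioni together with the fact that $w(x) = y^a$ (with $a = 1-2\sigma$) is an $A_2$ weight whose ``strong doubling'' allows us to pass from Lebesgue measure of a set to its $w$-measure.

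First I would record the weak Harnack inequality which is implicit in \cite{F-K-S}: for any non-negative supersolution $u$ of $\nabla \cdot (A \nabla u) = 0$ in $B_2$, there exist exponents $p > 0$ and a constant $C$, depending only on $n$, the ellipticity ratio $\Lambda/\lambda$, and the $A_2$ constant of $w$, such that
\[
\left( \frac{1}{w(B_1)} \int_{B_1} u^p \, w \, dx \right)^{1/p} \leq C \inf_{B_{1/2}} u .
\]
This is obtained from the Moser iteration on negative powers of $u$ performed in their paper; the supersolution half of the proof needs only that $u > 0$, and does not appeal to Harnack for $u$ itself.

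Next I would use the standard consequence of the $A_2$ condition that the measure $w\,dx$ is ``strong doubling'' with respect to $dx$: there exist constants $c_0 > 0$ and $q > 0$, depending only on $[w]_{A_2}$, such that for every ball $B$ and every measurable $E \subset B$,
\[
\frac{w(E)}{w(B)} \;\geq\; c_0 \left( \frac{|E|}{|B|} \right)^{q} .
\]
Applying this to $E = \{u \geq 1\} \cap B_1 \subset B_1$, whose Lebesgue measure is at least $\epsilon |B_1|$ by hypothesis, yields $w(E) \geq c_0 \epsilon^q \, w(B_1)$.

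Combining the two facts gives
\[
\int_{B_1} u^p \, w \, dx \;\geq\; \int_E 1 \cdot w \, dx \;=\; w(E) \;\geq\; c_0 \epsilon^q \, w(B_1),
\]
so that the left-hand side of the weak Harnack inequality is at least $(c_0 \epsilon^q)^{1/p}$, and thus $\inf_{B_{1/2}} u \geq C(\epsilon, n, \sigma)$, as claimed. The main obstacle, in my view, is locating or re-deriving a clean statement of the weak Harnack inequality for non-negative supersolutions in the $A_2$-weighted setting, since \cite{F-K-S} state the full Harnack inequality for solutions rather than the supersolution half explicitly; the Moser iteration on $u^{-s}$ for small $s > 0$ must be carried out in the weighted Sobolev framework, using the $A_2$ Poincar\'e and Sobolev inequalities they establish. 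Once that ingredient is granted, the remainder of the argument is a one-line application of the $A_2$ strong doubling property.
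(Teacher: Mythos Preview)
Your argument is correct, but it follows a different path from the one in the paper. The paper does not invoke the weak Harnack inequality as a black box; instead it runs the classical logarithm trick directly. Concretely, the paper sets $v=(\log u)^-$, observes that $v$ is a subsolution vanishing on the set $\{u\ge 1\}$, applies the local boundedness estimate for subsolutions from \cite{F-K-S} to control $\sup_{B_{1/2}} v$ by a weighted $L^2$ average, then uses a weighted Poincar\'e inequality for functions vanishing on a set of positive Lebesgue measure (derived from the Kinderlehrer--Stampacchia pointwise bound together with the Fabes--Kenig--Serapioni Sobolev estimate) to pass to the weighted Dirichlet integral of $\log u$, and finally bounds the latter by a Caccioppoli estimate obtained by testing the equation with $\zeta^2/u$. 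The conclusion $\inf_{B_{1/2}} u \ge e^{-C}$ then drops out, with the doubling property of $w$ entering only at the very end to control $w(B_2)/w(B_1)$.

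Your route is shorter once the weak Harnack inequality for supersolutions is granted, and it makes transparent exactly where the measure hypothesis enters: through the $A_p$ lower bound $w(E)/w(B)\ge [w]_{A_2}^{-1}(|E|/|B|)^2$. The paper's route, by contrast, avoids having to locate or re-derive the supersolution half of Harnack in the weighted setting (which, as you note, is not stated explicitly in \cite{F-K-S}); it relies only on the subsolution local boundedness estimate and the Poincar\'e inequality, both of which are stated there, together with an elementary Caccioppoli computation. In short: your proof trades one nontrivial ingredient (weak Harnack for supersolutions) for the paper's combination of simpler pieces (local $L^\infty$ bound for subsolutions, Poincar\'e, and Caccioppoli).
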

Although we prove the former statement, the obvious corollary concerning subsolutions follows from applying the lemma to $1-u$, and is what we actually use:
\begin{cor}
Suppose $u$ is a subsolution in $B_{2}$ with
\[ |\{ x \in B_1; u \leq 0 \}| \geq \epsilon |B_1| \]
and $u \leq 1$. 
Then there exists a constant $0< \mu < 1 $ depending only on $\epsilon, n,$ and $\sigma$ such that 
\[ \sup_{B_\frac{1}{2}} u \leq \mu \]
\end{cor}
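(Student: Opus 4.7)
The plan follows the hint given immediately after the lemma statement: apply the preceding weak Harnack / De Giorgi lemma for nonnegative supersolutions to the auxiliary function $v := 1 - u$, and then rearrange. The three items to verify are (i) that $v$ is a nonnegative supersolution on $B_2$, (ii) that the measure hypothesis on $\{u \leq 0\}$ translates exactly into the measure hypothesis of the lemma applied to $v$, and (iii) that the resulting lower bound on $\inf_{B_{1/2}} v$ rearranges to produce a constant $\mu \in (0,1)$.

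For (i), the operator $\nabla \cdot (A \nabla \,\cdot\,)$ is linear, and constants are simultaneously sub- and supersolutions. Hence if $u$ is a subsolution, then $-u$ is a supersolution, and so is $v = 1 - u$. The hypothesis $u \leq 1$ on $B_2$ forces $v \geq 0$ on $B_2$, so $v$ is a nonnegative supersolution, as required. For (ii), the two level sets coincide,
\[ \{x \in B_1 : u(x) \leq 0\} \;=\; \{x \in B_1 : v(x) \geq 1\}, \]
so the corollary's hypothesis $|\{u \leq 0\} \cap B_1| \geq \epsilon |B_1|$ is literally the lemma's hypothesis $|\{v \geq 1\} \cap B_1| \geq \epsilon |B_1|$.

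For (iii), the lemma supplies a constant $C = C(\epsilon, n, \sigma) > 0$ such that $\inf_{B_{1/2}} v \geq C$, which rearranges to $\sup_{B_{1/2}} u \leq 1 - C$. If $C \in (0,1)$ we set $\mu := 1 - C \in (0,1)$; if $C \geq 1$ we simply set $\mu := 1/2$, since then $\sup_{B_{1/2}} u \leq 1 - C \leq 0 < \mu$. In either case $\mu \in (0,1)$ depends only on $\epsilon, n, \sigma$, and the bound $\sup_{B_{1/2}} u \leq \mu$ holds.

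There is no genuine obstacle: the statement is a formal corollary obtained by the affine change of variable $v = 1 - u$. The entire analytic content is already packaged in the preceding supersolution lemma, which in turn rests on the Fabes--Kenig--Serapioni extension of De Giorgi--Nash--Moser to degenerate elliptic equations with $A_2$ weights. The only minor bookkeeping is the case split used to ensure $\mu$ lies strictly between $0$ and $1$.
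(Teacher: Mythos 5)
Your proof is correct and is exactly the argument the paper intends: the text following the lemma says the corollary ``follows from applying the lemma to $1-u$,'' which is precisely the affine substitution $v = 1-u$ you carry out, with the same verification that $v$ is a nonnegative supersolution, that $\{u\le 0\}=\{v\ge 1\}$, and that $\inf v\ge C$ rearranges to $\sup u\le 1-C$. The only addition is your (harmless) case split on whether $C<1$; in fact $C=e^{-C'\left(w(B_2)/w(B_1)\right)^{1/2}}<1$ from the lemma's proof, so that case never arises, but including it does no harm.
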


In line with \cite{F-K-S}, we let $w(B) = \int_B y^a dx dy$ represent the integral of our weight over a ball. 
The proof of this lemma depends on a Poincare inequality:
\begin{lem}
For any $\epsilon > 0$ there exists a $C(\epsilon,\sigma)$ such that for $u \in H^1(B_1)$ with 
\[ |\{x\in B_1; u = 0\}| \geq \epsilon |B_1| \]
we have 
\[ \int_{B_1} y^a u^2 dx dy  \leq C \int_{B_1} y^a |\nabla u|^2 dx dy \]
\end{lem}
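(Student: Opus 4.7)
The plan is to deduce this weighted Poincar\'e inequality from the classical mean-zero version of Fabes, Kenig, and Serapioni by using the hypothesis $|\{u=0\}| \geq \epsilon |B_1|$ to control the weighted average of $u$. Throughout, write $w(X) = y^a$ and $w(A) = \int_A w\,dX$; recall that $y^a$ is an $A_2$ weight since $-1 < a = 1-2\sigma < 1$.

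First, I would invoke the standard weighted Poincar\'e inequality for $A_2$ weights (Theorem 1.3 of \cite{F-K-S}): there exists a constant $C = C(\sigma)$ such that for every $u \in H^1(B_1, w)$,
\[ \int_{B_1} w\,|u - \bar u|^2\,dX \leq C \int_{B_1} w\,|\nabla u|^2\,dX, \qquad \bar u := \frac{1}{w(B_1)}\int_{B_1} w u \, dX. \]
Thus it suffices to bound $|\bar u|^2 w(B_1)$ by the right-hand side as well, since $\int w u^2 \leq 2 \int w |u-\bar u|^2 + 2|\bar u|^2 w(B_1)$.

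Next, I would use the vanishing set. Let $E = \{X \in B_1 : u(X) = 0\}$. Since $u \equiv 0$ on $E$,
\[ |\bar u|^2\, w(E) \;=\; \int_E w\,|u - \bar u|^2\, dX \;\leq\; \int_{B_1} w\,|u - \bar u|^2\, dX \;\leq\; C \int_{B_1} w\,|\nabla u|^2\, dX. \]
To convert this into an estimate with $w(B_1)$ in place of $w(E)$, I would invoke the $A_\infty$ property of $y^a$ (a standard consequence of the $A_2$ condition, cf. \cite{F-K-S}): there exists $\delta(\epsilon, \sigma) > 0$ such that whenever $|E|/|B_1| \geq \epsilon$ one has $w(E)/w(B_1) \geq \delta$. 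Combining with the previous display yields
\[ |\bar u|^2\, w(B_1) \;\leq\; \frac{C}{\delta(\epsilon,\sigma)} \int_{B_1} w\,|\nabla u|^2\, dX, \]
and adding this to the weighted Poincar\'e inequality gives the desired bound with $C(\epsilon,\sigma) = 2C(1 + \delta^{-1})$.

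The main obstacle is not conceptual — every ingredient is classical — but rather choosing the right form of the Fabes-Kenig-Serapioni inequalities. The most delicate step is the $A_\infty$ implication $|E|/|B_1| \geq \epsilon \Rightarrow w(E)/w(B_1) \geq \delta(\epsilon)$; this is where the degenerate nature of the weight $y^a$ near $\{y=0\}$ could create difficulties if the vanishing set $E$ were concentrated along $\{y = 0\}$, but the $A_\infty$ reverse-H\"older inequality for $y^a$ handles this uniformly regardless of how $E$ is distributed. Once this is in place, the proof is just a two-line combination as above.
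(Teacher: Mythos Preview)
Your proof is correct, and it takes a genuinely different route from the paper's argument. The paper proceeds via a pointwise Riesz-potential bound: it cites the classical inequality (e.g.\ Kinderlehrer--Stampacchia) that a function vanishing on a set of measure $\geq \epsilon |B_r|$ satisfies
\[
|u(x)| \leq C(\epsilon) \int_{B_r} \frac{|\nabla u(z)|}{|x-z|^{n-1}}\,dz,
\]
and then feeds this directly into Theorem~1.2 of \cite{F-K-S}, which converts such potential bounds into weighted $L^2$ inequalities for $A_2$ weights. Your approach, by contrast, starts from the mean-value form of the weighted Poincar\'e inequality and recovers control of the weighted mean $\bar u$ by evaluating the Poincar\'e deficit on the zero set and invoking the $A_\infty$ comparability $|E|/|B_1|\geq\epsilon \Rightarrow w(E)/w(B_1)\geq\delta$.

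What each buys: the paper's route is a one-line reduction to two cited results and never touches the $A_\infty$ structure explicitly, but it requires the reader to accept the pointwise potential estimate as a black box. Your argument is more self-contained at the functional-analytic level and makes transparent exactly where the degeneracy of $y^a$ enters (the $A_\infty$ lower bound on $w(E)$); it also generalizes immediately to any $A_2$ weight without revisiting the potential estimate. Both are short and both lean on \cite{F-K-S}, just on different theorems from it.
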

\begin{proof}
It is a classical result (see, for example, Kinderlehrer and Stampacchia, II.A.15 \cite{K-S}) that for smooth functions $u$ on $B_r$ that vanish on a set of measure at least $\epsilon B_r$, we have 
\[ |u(x)| \leq C \int_{B_r} \frac{|\nabla u(z)|}{|x-z|^{n-1}} dz \]
However, this is precisely the point of departure for Fabes, Kenig, and Serapioni (Theorem 1.2) \cite{F-K-S}, where they prove for functions $u$ satisfying this condition, we have
\[ \int_{B_r} y^a u^2 dx dy \leq C r \int_{B_r} y^a |\nabla u|^2 dx dy \]
which is precisely the result we were looking for. 
\end{proof}

The rest of the proof follows the proof given in \cite{Lin}, which is reasonably short, so we reproduce it here.
\begin{proof}
Assume $u \geq \delta > 0$ - we will see that the final result is insensitive to $\delta$, and so we can let $\delta \rightarrow 0+$ at the end.

Let $v = (\log u)^-$, then $v$  is a subsolution to the equation, bounded by $\log \delta^{-1}$. Then we have (Theorem 2.3.1 in \cite{F-K-S})
\[ \sup_{B_\frac{1}{2}} v \leq C \left( \frac{1}{w(B_1)} \int_{B_1} y^a v^2 dx dy \right)^{\frac{1}{2}} \]
Applying the Poincare inequality, we see that 
\[ \sup_{B_\frac{1}{2}} v \leq C \left( \frac{1}{w(B_1)} \int_{B_1} y^a |\nabla v|^2 dx dy \right)^\frac{1}{2} \]
We set the test function $\phi = \frac{\zeta^2}{u}$ for $\zeta \in C_0^1(B_2)$. Then we obtain
\[ 0 \leq \int y^a \nabla u \cdot \nabla (\frac{\zeta^2}{u}) dy dx = - \int \frac{\zeta^2}{u^2} (\nabla u)^2  + 2 \frac{\zeta}{u} \nabla u \cdot \nabla \zeta dx dy \]
whence we obtain
\[ \int y^a \zeta^2 |\nabla (\log u)|^2 dx dy \leq C \int y^a |\nabla \zeta|^2 dx dy \]
By fixing $\zeta = 1$ on $B_1$ and giving it bounded first derivative, we have
\[ \int_{B_1} y^a |\nabla (\log u)|^2 dx dy \leq C w(B_2) \]
Combining our statements, we find
\[ \sup_{B_\frac{1}{2}} (\log u)^- \leq C \left(\frac{w(B_2)}{w(B_1)} \right)^{\frac{1}{2}} \]
which gives 
\[ \inf_{B_\frac{1}{2}} u \geq e^{-C \left(\frac{w(B_2)}{w(B_1)} \right)^{\frac{1}{2}}} \]
$ \frac{w(B_2)}{w(B_1)} $ is bounded since all $A_p$ weights have a doubling property (see \cite{Stein-2}, V.1.5), and hence 
\[ \inf_{B_\frac{1}{2}} u \geq C \]
\end{proof}
\subsection{The De Giorgi-Nash-Moser Harnack Inequality}\label{ap:DNMH}
\begin{thm}[DeGiorgi-Nash-Moser Interior Harnack Inequality]
 Let $u$ be a non-negative solution in $B_1$ to the equation. Then for $r < 1$, we have 
\[ \sup_{B_r} u \leq c (1-r)^{-p} \inf_{B_r} u \]
where $c,p$ do not depend on $r$ or the center of the ball. 
\end{thm}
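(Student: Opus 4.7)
The plan is to follow Moser's iteration as adapted to $A_2$-weighted degenerate elliptic equations in \cite{F-K-S}, extracting the explicit $(1-r)^{-p}$ dependence by combining the local maximum principle and the weak Harnack inequality directly on the pair of radii $r$ and $(1+r)/2$.

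First, I would establish the \emph{local maximum principle} for nonnegative subsolutions $v$: for concentric balls $B_\rho \subsetneq B_R \subseteq B_1$ and any exponent $q > 0$,
\[ \sup_{B_\rho} v \leq \frac{C}{(R-\rho)^{p_1}} \left( \frac{1}{w(B_R)} \int_{B_R} y^a v^q\, dX \right)^{1/q}. \]
This is the standard Moser iteration: test the equation against $\zeta^{2} v^{2k+1}$ with $\zeta$ a smooth cutoff between $B_\rho$ and $B_R$, apply the weighted Sobolev inequality from \cite{F-K-S}, and iterate over a geometric sequence of radii $\rho = \rho_\infty < \cdots < \rho_1 < \rho_0 = R$. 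The $(R-\rho)^{-p_1}$ dependence tracks $\|\nabla \zeta\|_\infty \lesssim (R-\rho)^{-1}$ through each step of the iteration.

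Second, I would establish the \emph{weak Harnack inequality} for nonnegative supersolutions $u$: there exists $p_0 > 0$ such that for the same configuration of balls,
\[ \left( \frac{1}{w(B_R)} \int_{B_R} y^a u^{p_0}\, dX \right)^{1/p_0} \leq \frac{C}{(R-\rho)^{p_2}} \inf_{B_\rho} u. \]
A Moser iteration with negative test exponents applied to $1/u$ yields this bound with $p_0$ replaced by any negative exponent. To cross the threshold $p=0$ and reach a positive exponent $p_0$, I would show that $\log u$ has bounded mean oscillation with respect to the measure $y^a\, dX$: testing the equation against $\zeta^2 / u$, exactly as in the proof of the oscillation lemma above, combined with the weighted Poincar\'e inequality already established there, gives the required BMO bound on $\log u$. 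The John--Nirenberg inequality on the space of homogeneous type $(\mathbb{R}^{n+1}, y^a\, dX)$ then provides $\int y^a e^{p_0 |\log u - c|}\, dX < \infty$ for some small $p_0 > 0$, bridging the positive and negative exponent ranges.

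Finally, I would combine the two estimates on the specific pair of radii $\rho = r$ and $R = (1+r)/2$ (so that $R - \rho = (1-r)/2$), yielding
\[ \sup_{B_r} u \leq \frac{C}{(1-r)^{p_1}} \left( \frac{1}{w(B_{(1+r)/2})} \int_{B_{(1+r)/2}} y^a u^{p_0}\, dX \right)^{1/p_0} \leq \frac{C'}{(1-r)^{p_1+p_2}} \inf_{B_r} u, \]
which is the claimed inequality with $p = p_1 + p_2$. The main obstacle is the weak Harnack step, specifically the crossover from negative to positive exponents: this is the most delicate part of Moser's original argument, and in the present setting it requires both the weighted Poincar\'e inequality and the John--Nirenberg inequality for the homogeneous space determined by the $A_2$ weight $y^a$. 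That this machinery is available for $A_2$ weights is essentially the content of \cite{F-K-S}; the remaining steps amount to formal bookkeeping of constants through the Moser iteration.
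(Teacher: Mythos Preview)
Your argument is correct: tracking the gap $(R-\rho)$ through the Moser iteration for $A_2$-weighted equations does yield the quantitative form $\sup_{B_r} u \leq c(1-r)^{-p}\inf_{B_r} u$, and all the ingredients you invoke (local maximum principle, weak Harnack via BMO of $\log u$ and John--Nirenberg on the homogeneous space) are available in \cite{F-K-S}.

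The paper, however, takes a much shorter route. It accepts the \emph{scale-invariant} Harnack inequality $\sup_{B_{1/2}} u \leq C \inf_{B_{1/2}} u$ from \cite{F-K-S} as a black box and then obtains the $(1-r)^{-p}$ dependence by a Harnack chain: cover the annulus $B_{1-2^{-(k+1)}}\setminus B_{1-2^{-k}}$ by balls of radius $2^{-(k+1)}$ whose doubles stay in $B_1$, and iterate the scale-invariant estimate through overlapping balls. This gives $\sup_{B_{1-2^{-k}}} u \leq C^{2k-1}\inf_{B_{1-2^{-k}}} u$, and since the smallest $k$ with $1-2^{-k}\geq r$ satisfies $k\sim -\log(1-r)$, one gets $C^{2k-1}\lesssim (1-r)^{-p}$ with $p = 2\log C/\log 2$. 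Your approach rebuilds the Moser machinery to extract the gap dependence directly, which is more laborious but self-contained and yields a sharper handle on the constants; the paper's chaining argument is a few lines once the unit-scale Harnack is granted, at the cost of a possibly worse exponent $p$.
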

\begin{proof}
This fact is a straightforward extension of the standard interior Harnack inequality (proved in \cite{F-K-S}), which simply states that, so long as the equation is satisfied in $B_2$, we have 
\[ \sup_{B_\frac{1}{2}} u \leq C \inf_{B_\frac{1}{2}} u \]
where $C > 1$ is invariant under translation or dilation of the ball. In what follows, we assume $r > \frac{1}{2}$, since the standard inequality proves the result for the case $r \leq \frac{1}{2}$, and that the balls are closed. 

Suppose $\frac{1}{2} > r > \frac{1}{4}$. Consider the collection of balls $B_\frac{1}{2} (x)$, where $x \in \partial B_\frac{1}{2}$. The union of these balls, along with $B_\frac{1}{2}(0)$, is precisely $B_\frac{3}{4}$. For every $x \in \partial B_\frac{1}{2}$, we have 
\[ \sup_{B_\frac{1}{4}(x)} u \leq C u(x) \leq C \sup_{B_\frac{1}{2}} u  \]
Let $x^* \in B_{\frac{1}{2}}$ be such that $B_\frac{1}{4}(x^*)$ is a ball containing $\inf_{B_\frac{3}{4}} u$.  Notice that 
\[ C\sup_{B_\frac{1}{2}} u \leq C^2 u(x^*) \leq C^3 \inf_{B_\frac{3}{4}} u \]
Hence, we have 
\[ \sup_{B_\frac{3}{4}} u \leq C^3 \inf_{B_\frac{3}{4}} u \]

We use the same argument to extend from $B_{1-\frac{1}{2^k}}$ to $B_{1-\frac{1}{2^{k+1}}}$ inductively, and we get
\[ \sup_{B_{1-2^{-k}}} u \leq C^{2k-1} \inf_{B_{1-2^{-k}}} u \] 
and so on, until we reach a the first $k$ such that $1-2^{-k} > r$. At this point, we recognize that $k\approx -\log(1-r)$. Plugging in, we get the desired result. 
\end{proof}

\bibliography{optimal_regularity}{}
\bibliographystyle{amsplain}
\end{document}